\documentclass[12pt]{article}

\usepackage{amsmath,amssymb,amsthm}
\usepackage{verbatim}
\usepackage{color}
\usepackage{graphicx}

\hoffset -0.2 truein 

\setlength{\topmargin}{-1.5cm}
\setlength{\headheight}{1.5cm}
\setlength{\headsep}{0.3cm}
\setlength{\textheight}{22.5cm}
\setlength{\oddsidemargin}{0.5cm}
\setlength{\evensidemargin}{0.5cm}
\setlength{\textwidth}{16.0cm}

\tolerance 500

\newtheorem{theorem}{Theorem}[section]
\newtheorem{thm}[theorem]{Theorem}
\newtheorem{corollary}[theorem]{Corollary}
\newtheorem{cor}[theorem]{Corollary}
\newtheorem{lemma}[theorem]{Lemma}
\newtheorem{lem}[theorem]{Lemma}
\newtheorem{proposition}[theorem]{Proposition}
\newtheorem{prop}[theorem]{Proposition}

\newtheorem{remark}[theorem]{Remark}

\numberwithin{equation}{section}

\def\be{\begin{equation}}
\def\ee{\end{equation}}
\def\bes{\begin{equation*}}
\def\ees{\end{equation*}}

\newcommand{\eqn}[2]{\begin{equation}\label{#1}#2\end{equation}}
\newcommand{\eqnst}[1]{\begin{equation*}#1\end{equation*}}
\newcommand{\eqnspl}[2]{\begin{equation}\begin{split}\label{#1}%
	#2\end{split}\end{equation}}
\newcommand{\eqnsplst}[1]{\begin{equation*}\begin{split}%
	#1\end{split}\end{equation*}}

\def\es{\emptyset}

\def\Z{\mathbb{Z}}
\def\Capac{\mathrm{Cap}}

\def\cR{\mathcal{R}}

\def\cT{\mathcal{T}}

\def\eqd{{\buildrel (d) \over =}}

\definecolor{darker-green}{rgb}{0,0.75,0}

\parindent.5cm
\parskip0cm



 \def\sB {{\mathcal B}} 
 \def\sE {{\mathcal E}} \def\sF {{\mathcal F}}
  
 \def\sK {{\mathcal K}} \def\sL {{\mathcal L}}
  
\def\sP {{\mathcal P}}  \def\sR {{\mathcal R}}
  \def\sU {{\mathcal U}}

 \def\bE {{\mathbb E}} 
 \def\bH {{\mathbb H}}

\def\bP {{\mathbb P}}  \def\bR {{\mathbb R}}

 \def\bZ {{\mathbb Z}}

         
\def\med{\medbreak\noindent}

\def\sms{\smallskip}
\def\ms{\medskip}

\def\sm{\smallskip\noindent}

\def\ignore#1{}



\def\al {\alpha}
\def\lam {\lambda}  
 
 \def\Th{\Theta}

 \def\gam{\gamma}


\def\to {\rightarrow}

\def\pd {\partial}
\def\q{\quad} 
\def\dint{\int\kern-.6em\int}



\def\Cap{\mathop{{\rm Cap }}}

\def \half {{\tfrac12}}
\def\fract{\tfrac}

\def\wt{\widetilde}

\def\be{\begin{equation}}
\def\ee{\end{equation}}
\def\bes{\begin{equation*}}
\def\ees{\end{equation*}}
\def\ba{\begin{align}}
\def\ea{\end{align}}
\def\xxea{\end{align}}
\def\bas{\begin{align*}}
\def\eas{\end{align*}}

\def\nn{\nonumber} 

\def\proof{{ \sm {\em Proof. }}}
\def\qed{{\hfill $\square$ \bigskip}}

\definecolor{dgreen}{rgb}{0, 0.6, 0.1}
\definecolor{dblue}{rgb}{0, 0.0, 0.6}
\definecolor{vdblue}{rgb}{0,.08, 0.45}
\definecolor{dred}{rgb}{0.7, 0.0, 0.0}
\definecolor{vdblue}{rgb}{0,.08, 0.45}

\definecolor{purple}{rgb}{0.6, 0.0, 0.6}
\definecolor{mytext}{rgb}{0.1, 0.1, 0.1}




\def\dU{d_\sU}

\begin{document}

\font\titlefont=cmbx14 scaled\magstep1
\title{\titlefont Geometry of the uniform spanning forest components in high dimensions}

\author{
M. T. Barlow\footnote{Research partially supported by NSERC (Canada)},
A. A. J\'arai
}

\maketitle

\begin{abstract}
In this note we study the geometry of the component of the origin 
in the Uniform Spanning Forest of $\mathbb{Z}^d$, as well as
in the Uniform Spanning Tree of wired subgraphs of $\bZ^d$, 
when $d \ge 5$. In particular, we study connectivity properties with 
respect to the Euclidean and the intrinsic distance.
We intend to supplement these with further estimates 
in the future. We are making this preliminary note 
available, as one of our estimates is used in work of
Bhupatiraju, Hanson and J\'arai \cite{BHJ15} on sandpiles.

\end{abstract}

\section{ Introduction } \label{sec:intro}

The Uniform Spanning Tree (UST) on a finite graph $G$ is a random spanning
tree of $G$, chosen uniformly among all spanning trees of $G$.
Motivated by questions of Lyons, Pemantle \cite{Pem91} considered
the weak limit of the USTs on a growing sequence of subgraphs of $\bZ^d$, 
induced by sets $V_n \uparrow \bZ^d$, and showed that the limit 
exists. The limiting random object, that is a random spanning forest 
of $\bZ^d$, is called the Uniform Spanning Forest (USF). 
Implicit in Pemantle's work is the result that an alternative 
choice of boundary condition yields the same limit. Namely, form the 
``wired'' graph $G_n^W = (V_n \cup \{ r_n \}, E_n)$, by collapsing all
vertices in $\bZ^d \setminus V_n$ into $r_n$, and removing self-loops created 
at $r_n$. Then the weak limit of the USTs on $G_n^W$ coincides with the USF.
One of Pemantle's results was that the USF is connected a.s.~in dimensions $1 \le d \le 4$, 
but it consists of infinitely many (infinite) trees a.s.~in dimensions $d \ge 5$. 

Fundamental to the study of the UST/USF is Wilson's algorithm \cite{W}, \cite{LP:book} 
that allows one to build the UST/USF from Loop-Erased Random Walks (LERWs), 
and thereby analyze it in terms of random walk. All the necessary background 
about the UST/USF, that we do not detail in this note, can be found in 
the book \cite{LP:book}.

Masson \cite{Mas} and Barlow and Masson \cite{BM1,BM2} studied 
the geometry of the LERW and the UST in two dimensions. This led 
to a detailed understanding of random walk on the UST. 
The purpose of this note is to prove estimates on the geometry 
of the LERW and the USF in dimensions $d \ge 5$. We are interested 
in properties such as the length of paths and volumes of balls,
both with respect to Euclidean distance and the intrinsic 
metric of the tree components. On the one hand we are interested 
in extending results from 2D to high dimensions, where the geometry
is very different. On the other hand, our Theorem \ref{T:cyclepop} 
is used in work of Bhupatiraju, Hanson and J\'arai \cite{BHJ15} 
on sandpiles.

\section{Notation}
\label{sec:notation}

Let $\sU=\sU_{\bZ^d}$ be the USF in $\bZ^d$, viewed as a random 
subgraph of the nearest neighbour integer lattice. 
Write $\sU(x)$ for the connected component of $\sU$ containing $x$. 

We extend this notation to $D \subset \bZ^d$ as follows.
When $D$ is finite, $\sU = \sU_D$ denotes the UST on the wired graph 
$G_D^W = (D \cup \{ r_D \}, E_D)$, where $E_D$ can be 
identified with those edges of $\bZ^d$ that have at least one
endpoint in $D$. For $x \in D$, we denote by $\sU(x)$ the 
connected component of $x$ in the graph obtained from $\sU$
by splitting all edges at $r_D$. In other words, $\sU(x)$ is the
union of those paths in $\sU$ that do not contain $r_D$ as an
interior vertex. We write $\sU_0$ for $\sU(0)$, when $0 \in D$.

When $D \subset \bZ^d$ is infinite, we let $\sU$ denote the weak 
limit, as $n \to \infty$, of the USTs on the wired graphs 
$G_{D_n}^W$, where $D_n = \{ x \in D : |x| \le n \}$. 
The limit exists due to monotonicity; see \cite{LP:book}. 
Wilson's algorithm rooted at infinity \cite{BLPS}, \cite{LP:book} can be
easily adapted to sample $\sU$. We let $\sU(x)$ denote the union of
those paths in $\sU$ that do not contain $r_D$ as an interior vertex,
and $\sU_0 = \sU(0)$.

For any of the cases of $\bZ^d$, or $D \subset \bZ^d$ finite or infinite, 
we let
\eqnst
{ d_\sU(x,y)
  := \text{graph distance between $x$ and $y$ in $\sU$}, }
where, if $y \not\in \sU(x)$, we set $d_\sU(x,y) = \infty$.
The meaning of $\sU$ will always be clear from context.

\med 

{\bf Notation for sets:}
We denote balls in different metrics as follows:
\begin{align*}
  B_E(x,r) &=\{ y \in \bZ^d : |x-y| \le r \}, \\
  B_n &= B_E(0,n)\\
  Q(x,n) &=\{ y\in \bZ^d: ||x-y||_\infty \le n \}, \\
  Q_n &= Q(0,n), \\
  B_\sU(x,r) &=\{ y \in \bZ^d : \dU(x,y) \le r \}, 
\end{align*}
For $A \subset \bZ^d$ we denote:
\begin{align*}
 \pd A &= \{ x \in \bZ^d - A: x \sim y \hbox{ for some } y \in A \}, \\
 \pd_i A &= \{ x \in A: x \sim y \hbox{ for some } y \in A^c \}, \\
  A^o &= A - \pd_i A.
\end{align*}
Let $\pi_i$ be projection onto the $i$th coordinate axis, and 
$\bH_n$ be the hyperplane 
$$ \bH_n=\{ x : \pi_1(x) = n \}. $$
Let $\sR_n = \{ n \} \times [-n,n]^{d-1}$ denote the 
``right-hand face'' of $[-n,n]^d$, in the 
first coordinate direction.

\med 

{\bf Notation for processes.} $S^x=(S^x_k, k \ge 0)$ is simple random walk with $S^x_0 =x$, and
$\bP^x$ is its law. We let $S=S^0$,  and $\bP = \bP^0$.  
If we discuss random walks $S^x$ and $S^y$ with $x \not= y$,
then they will always be independent.

A path $\gam$ is a (non-necessarily self avoiding) sequence of adjacent
vertices in $\bZ^d$ -- ie $\gam=(\gam_0, \gam_1, \dots )$ with $\gam_{i-1}\sim \gam_i$.
(Sometimes we will write $\gam(i)$ for $\gam_i$.)
Paths can be either finite or infinite.
We will often need to consider the beginning or final portions
of paths with respect to the first or last hit on a set.
To this end, we define a number of operations on paths. 
Let $\gam= (\gam_0, \gam_1, \dots )$ be a path.
Given a set $A \subset \bZ^d$ define 
$k_1 = \min\{ k \ge 0: \gam_k \in A \}$, 
$k_2 = \max \{ k \ge 0: \gam_k \in A \}$, 
and set
\begin{align*}
 \sB^F_A \gam &= ( \gam_{k_1}, \gam_{k_1 +1}, \dots, ), \\
 \sB^L_A \gam &= ( \gam_{k_2}, \gam_{k_1 +1}, \dots, ), \\
 \sE^F_A \gam &= (\gam_0, \dots,  \gam_{k_1}),\\
 \sE^L_A \gam &= (\gam_0, \dots,  \gam_{k_2}), \\
 \Th_k \gam &= (\gam_k, \dots ), \\
 \Phi_k \gam &= (\gam_0, \dots, \gam_k), \\
  H_A(\gam) &= \sum_i 1_{( \gam_i \in A )}.
\end{align*}
Thus $\sB^F_A \gam$ is the path $\gam$ `\underline{B}eginning' at the `\underline{F}irst' hit
on $A$, and $\sE^L_A \gam$ is the path $\gam$ `\underline{E}nded' at the `\underline{L}ast' hit 
on $A$, etc. If $\gam$ is a finite path we write $|\gam|$ for the
length of $\gam$. $H_A(\gam)$ is the number of hits by $\gam$ on the set $A$.
Let $\sL \gam$ be chronological loop erasure of $\gam$, and
if $\gam=(\gam_0, \dots, \gam_n)$ is a finite path let
$\sR \gam = (\gam_n , \gam_{n-1}, \dots , \gam_0)$ be the time reversal of
$\gam$.

We define hitting times 
\eqnsplst
{ \tau_A  &= \inf \{ j \ge 0 : S_j \not\in A \}, \\
  T_A &= \inf \{ j \ge 0 : S_j \in A \}, \\
   T_A^+ &= \inf \{ j \ge 1 : S_j \in A \}. }
When we need to specify the process we write $T_A[S]$ etc.

Given a domain $D \subset \bZ^d$, we denote the Green functions
\eqnsplst
{ G_D(x,y)
  &= \bE^x \Big( \sum_{0 \le k < \tau_D} I[ S^x = y ] \Big) \\
  G(x,y)
  &= G_{\bZ^d}(x,y). }

\med
{\bf A note on constants.} Throughout, $c$ and $C$ will denote positive
finite constants that only depend on the dimension $d$, and whose value may change
from line to line, and even within a single string of inequalities.

\section{Properties of the LERW}   
\label{sec:lew} 

In this section we derive a number of auxiliary estimates on LERW
in dimensions $d \ge 5$. Some of these will be used in 
Sections \ref{sec:ub} and \ref{sec:vol-lb}, where we give upper and
lower bounds on the volume of balls in the intrinsic metric. 
Two results of this section that are of interest 
in themselves are: (i) Proposition \ref{P:len-lb}, that gives a 
large deviation upper bound on the lower tail of the number of 
steps in a LERW up to its exit from a large box; and (ii) 
Theorem \ref{T:Ass2}, that gives an upper bound on the probability that
$x, y \in \bZ^d$ are in the same component of $\sU$ and the 
path between them has length at most $n$.

The papers \cite{Mas, BM1} give a number of properties  of LERW
in $\bZ^2$, some of which hold for more general graphs.

A fundamental fact about LERWs is the following ``Domain Markov property''
--- see \cite{La2}.

\begin{lemma} \label{L:dmp}
Let $D \subset \bZ^d$,
let $\gam=(\gam_0, \dots , \gam_n)$ be a path from $x=\gam_0$ to $D^c$. 
Set $\al = \Phi_k \gam$, $\beta = \Th_k \gam$.
Let $Y$ be a random walk started at $\gam_k$ conditioned 
on the event $\{ \tau_D(Y) < T^+_\al(Y) \}$. Then
\be
 \bP\big( \sL ( \sE^F_{D^c} S ) = \gam | \Psi_k (  \sL ( \sE^F_{D^c} S ))=\al \big)
 = \bP( \sL(  \sE^F_{D^c} Y ) = \beta ). 
\ee 
\end{lemma}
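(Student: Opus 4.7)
The plan is to express both sides via the standard loop representation of LERW measure, and then close the gap with a single last-exit identity at $\gamma_k$. The key input, which goes back to the loop-by-loop bijection used throughout \cite{La2}, is that for any self-avoiding path $\zeta=(\zeta_0,\dots,\zeta_m)$ with $\zeta_0,\dots,\zeta_{m-1}\in D$ and $\zeta_m\in D^c$,
\[
\bP^{\zeta_0}\bigl(\sL(\sE^F_{D^c} S) = \zeta\bigr)
= \prod_{j=0}^{m-1} \frac{G_{D \setminus \{\zeta_0,\dots,\zeta_{j-1}\}}(\zeta_j,\zeta_j)}{2d}.
\]
This follows by matching each walk having that loop-erasure with a sequence $(\omega_0,\dots,\omega_{m-1})$, where $\omega_j$ is a loop at $\zeta_j$ staying in the shrinking domain $D\setminus\{\zeta_0,\dots,\zeta_{j-1}\}$, and summing loop weights to produce Green functions.

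Write $D_j := D\setminus\{\gamma_0,\dots,\gamma_{j-1}\}$ and $\alpha' := \{\gamma_0,\dots,\gamma_{k-1}\}$. First I would apply the formula to $\gamma$ itself for the numerator of the LHS, and decompose $\{\Phi_k(\sL(\sE^F_{D^c}S)) = \alpha\}$ as loops at $\gamma_0,\dots,\gamma_{k-1}$ in $D_0,\dots,D_{k-1}$ (interleaved with forward steps), followed by an arbitrary excursion from $\gamma_k$ that exits $D$ before hitting $\alpha'$. This gives
\[
\bP\bigl(\Phi_k(\sL) = \alpha\bigr)
= \Bigl[\prod_{j=0}^{k-1}\frac{G_{D_j}(\gamma_j,\gamma_j)}{2d}\Bigr]\,\bP^{\gamma_k}(\tau_D < T^+_{\alpha'}),
\]
and hence $\text{LHS} = \Bigl[\prod_{j=k}^{n-1} G_{D_j}(\gamma_j,\gamma_j)/(2d)\Bigr] / \bP^{\gamma_k}(\tau_D < T^+_{\alpha'})$.

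For the RHS I would observe that the conditioning $\{\tau_D(Y)<T^+_\alpha(Y)\}$ forbids $Y$ from ever revisiting $\gamma_k$, so the loop at $\gamma_k$ in the loop decomposition of $Y$ is trivial and its first step must land on $\gamma_{k+1}$; subsequent loops at $\gamma_j$ with $j\ge k+1$ are constrained simultaneously by the loop-erasure history $\{\gamma_k,\dots,\gamma_{j-1}\}$ and the avoidance set $\alpha'$, i.e.\ must stay in $D_j$. This produces
\[
\bP^{\gamma_k}\bigl(\sL(\sE^F_{D^c}S)=\beta,\ \tau_D<T^+_\alpha\bigr)
= \frac{1}{2d}\prod_{j=k+1}^{n-1}\frac{G_{D_j}(\gamma_j,\gamma_j)}{2d},
\]
and after dividing by $\bP^{\gamma_k}(\tau_D<T^+_\alpha)$ the desired equality $\text{LHS} = \text{RHS}$ collapses to the single identity
\[
\bP^{\gamma_k}\bigl(\tau_D<T^+_{\alpha'}\bigr)
= G_{D_k}(\gamma_k,\gamma_k)\,\bP^{\gamma_k}\bigl(\tau_D<T^+_\alpha\bigr).
\]

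The one genuinely non-routine step is this last identity, which I would verify by a last-exit decomposition at $\gamma_k$ inside $D_k = D\setminus\alpha'$. Decomposing the walk from $\gamma_k$ into independent excursions from $\gamma_k$ back to $\gamma_k$ in $D_k$ (each with return probability $r = \bP^{\gamma_k}(T^+_{\gamma_k}<\tau_{D_k})$) followed by a final excursion that exits $D_k$ without returning, one obtains $G_{D_k}(\gamma_k,\gamma_k) = 1/(1-r)$ together with $\bP^{\gamma_k}(\tau_D<T^+_{\alpha'}) = \sum_{m\ge 0} r^m\,\bP^{\gamma_k}(\tau_D<T^+_\alpha) = \bP^{\gamma_k}(\tau_D<T^+_\alpha)/(1-r)$, which is exactly the claim. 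No estimates are required; the whole proof is bookkeeping around the loop representation.
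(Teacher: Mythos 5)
Your proof is correct. Note that the paper itself offers no proof of Lemma \ref{L:dmp} --- it is stated as a known fact with a pointer to Lawler's book --- so there is nothing internal to compare against; what you have written is essentially a careful reconstruction of the standard argument from that reference. The three ingredients all check out: (i) the Green-function product formula for the law of the loop erasure (valid because the last-exit decomposition puts walks with loop erasure $\zeta$ in bijection with sequences of loops $\omega_j$ at $\zeta_j$ confined to the shrinking domains $D\setminus\{\zeta_0,\dots,\zeta_{j-1}\}$); (ii) the identification of the event $\{\Phi_k(\sL)=\al\}$ as the first $k$ loops times the probability that the continuation from $\gam_k$ exits $D$ before hitting $\al'=\{\gam_0,\dots,\gam_{k-1}\}$; and (iii) the observation that under the event $\{\tau_D<T^+_\al\}$ the loop at $\gam_k$ degenerates, so the numerator of the right-hand side loses exactly the factor $G_{D_k}(\gam_k,\gam_k)/(2d)$ relative to the left-hand side. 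Your closing identity $\bP^{\gam_k}(\tau_D<T_{\al'})=G_{D_k}(\gam_k,\gam_k)\,\bP^{\gam_k}(\tau_D<T^+_\al)$ is exactly the right way to reconcile the two, and the excursion decomposition at returns to $\gam_k$ inside $D_k=D\setminus\al'$ proves it, since $T_{\al'}\wedge T^+_{\gam_k}=T^+_\al$ and $G_{D_k}(\gam_k,\gam_k)=(1-r)^{-1}$. The only step you state without full detail is the bijection behind the product formula itself (one must check that arbitrary loops in the shrinking domains concatenate to a walk whose chronological loop erasure is $\zeta$), but this is the standard lemma in the cited source and is fair to quote.
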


A key result in \cite{Mas} is a  `separation lemma' when $d=2$ -- see 
\cite[Theorem 4.7]{Mas}.
Let $S, S'$ be independent SRW in $\bZ^d$ with $S_0=S'_0=0$, 
and $T_n, T'_n$ be the hitting times of $\pd Q_n$. Set
\begin{align*}
 F_n &= \{  S[1, T_n] \cap S'[1, T'_n] = \emptyset \}, \\
 Z_n &= d( S(T_n),  S'[1, T'_n]) \vee  d( S'(T'_n),  S[0, T_n]).
\end{align*}

\begin{lem}\label{L:sep} (`Separation lemma').
Let $d\ge 5$. There exists $c_1>0$ such that
$$ \bP( Z_n \ge \half n | F_n) \ge c_1. $$
\end{lem}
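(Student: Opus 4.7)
The plan is to exploit the transience of intersections in $d \ge 5$: since $\sum_{x \ne 0} G(0,x)^2 < \infty$, we have $\bP(F_n) \ge \bP(F_\infty) = p_0 > 0$, so $\bP(F_n)$ is bounded both above by $1$ and below by $p_0$. It therefore suffices to exhibit an event $G_n \subset F_n \cap \{Z_n \ge \half n\}$ with $\bP(G_n) \ge c > 0$ uniformly in $n$; such a $G_n$ will yield $\bP(Z_n \ge \half n \mid F_n) \ge c$.

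The $G_n$ I have in mind funnels the two walks into opposite halves of $Q_n$. Let $\sigma = \min\{k : |\pi_1(S_k)| = n/10\}$ and define the event $A$ by the four conditions $\sigma < T_n$, $\pi_1(S_\sigma) = +n/10$, $S[\sigma, T_n] \subset \{\pi_1 \ge 1\}$, and $S(T_n) \in \{n\} \times [-n/4, n/4]^{d-1}$. Let $A'$ be the symmetric event for $S'$ with $\pi_1$ replaced by $-\pi_1$. Take $G_n = A \cap A' \cap F_n$. On $A \cap A'$ one has $S[0, \sigma] \subset \{|\pi_1| \le n/10\}$ and $S[\sigma, T_n] \subset \{\pi_1 \ge 1\}$, so $S[0, T_n] \subset \{\pi_1 \ge -n/10\}$; symmetrically $S'[0, T'_n] \subset \{\pi_1 \le n/10\}$. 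Since $\pi_1(S(T_n)) = n$ and $\pi_1(S'(T'_n)) = -n$, this forces both $d(S(T_n), S'[1, T'_n]) \ge 9n/10$ and $d(S'(T'_n), S[0, T_n]) \ge 9n/10$, so $Z_n \ge 9n/10 > \half n$ on $G_n$.

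For the lower bound $\bP(G_n) \ge c$, I proceed via the infinite-time event $F_\infty$: since $F_\infty \subset F_n$, one has $\bP(G_n) \ge \bP(A \cap A' \cap F_\infty) = p_0 \cdot \bP(A \cap A' \mid F_\infty)$. The conditional probability corresponds to realising the event $A \cap A'$ for the Doob $h$-transform of two independent SRWs by the non-intersection event. By a Harnack-type inequality for the two-walk harmonic function $(x, y) \mapsto \bP^{x,y}(F_\infty)$, this $h$-transform distorts the measure only by a bounded factor at macroscopic scales, so $\bP(A \cap A' \mid F_\infty) \ge c_3 \bP(A) \bP(A') \ge c_3 c_1^2$, where $\bP(A) \ge c_1$ follows from the invariance principle (the event $A$ is the discrete analogue of a positive-probability Brownian event for BM in $[-1, 1]^d$ that first reaches $\{x_1 = 1/10\}$, then stays in $\{x_1 \ge 0\}$, and exits the cube through the central region of $\{x_1 = 1\}$). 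Combining gives $\bP(G_n) \ge p_0 c_3 c_1^2 > 0$.

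The main obstacle is the Harnack-type comparison: one needs the two-walk function $(x, y) \mapsto \bP^{x,y}(F_\infty)$ to be bounded between two positive constants on a macroscopic set like $\{|x|, |y|, |x-y| \ge n/10\}$, so that the $h$-transform does not destroy the positive unconditional probability of $A \cap A'$. In $d \ge 5$ this follows from the estimate that, starting from $(x,y)$ with $|x-y| \ge r \ge 1$, the intersection-count expectation is $O(r^{-(d-4)})$, so $\bP^{x,y}(F_\infty) = p_0 + O(r^{-(d-4)})$ is comparable to $p_0$ on macroscopic sets; plugging this into the $h$-transform computation bounds the Radon--Nikodym derivative and yields the required comparison.
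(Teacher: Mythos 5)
Your overall strategy is the same as the paper's: reduce to exhibiting an event contained in $F_n \cap \{Z_n \ge \half n\}$ whose unconditional probability is bounded below uniformly in $n$ (only $\bP(F_n)\le 1$ is needed for this reduction), and build that event by steering the two walks to opposite faces of the cube. The geometric part of your construction is fine, and $\bP(A)\ge c$ is unproblematic. The gap is in the step where you certify that the steering events survive the non-intersection conditioning, i.e.\ the claim $\bP(A\cap A'\mid F_\infty)\ge c_3\,\bP(A)\bP(A')$ via a ``Doob $h$-transform by $(x,y)\mapsto \bP^{x,y}(F_\infty)$''. The event $F_\infty$ is not a function of the current positions of the two walks: whether the pair survives depends on the entire past of both paths (each walk must avoid the whole future trajectory of the other, and the pasts must avoid the futures). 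Consequently the conditioned pair is not the $h$-transform of the product chain by the positional function $\bP^{x,y}(F_\infty)$; the relevant ``harmonic function'' is a functional of the two paths traversed so far, and controlling it uniformly over configurations in which the walks are entangled near the origin is precisely the content of a separation lemma, so the argument is circular at its core. In particular, $A$ and $A'$ constrain the walks from time $0$, when both are at the origin and the conditioning is most singular, so the assertion that the transform ``distorts the measure only by a bounded factor at macroscopic scales'' does not apply to these events. (A smaller slip: for $|x-y|=r$ one has $\bP^{x,y}(F_\infty)=1-O(r^{-(d-4)})$, not $p_0+O(r^{-(d-4)})$; but this is not the main issue.)

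The paper avoids this difficulty with an elementary quantitative device that your argument is missing: force the first $k$ steps of $S$ and $S'$ deterministically in opposite directions along $e_1$ (an event $G_1$ of probability $(2d)^{-2k}$), with $k$ chosen so large that, given $G_1$, the probability that the walks ever meet afterwards is at most $\tfrac18 d^{-2}$ --- strictly smaller than the probability $(2d)^{-2}$ of the subsequent steering event. A simple subtraction then shows that the steering and the non-intersection occur simultaneously with conditional probability at least $(8d^2)^{-1}$. To repair your proof, either insert such an explicit initial separation and replace the $h$-transform appeal by this subtraction/union-bound argument, or genuinely prove a two-sided comparison for the path-dependent conditioning --- which is substantially harder than the lemma itself.
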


\proof  
Let $e_1=(1,0, \dots,0)$. Let $X$ be a SRW started at $2k e_1$, and $A_k=\{ je_1, k\le j \le 2k\}$.
Since $d \ge 5$ two independent SRWs intersect with probability less than 1, and thus
there exists $k$ (depending on $d$) such that
$$ \bP^0( \hbox{$S$ hits } X \cup A_k ) \le \fract{1}{16} d^{-2}. $$
Now fix this $k$, and let 
$$ G_1 =\{  S_i = -i e_1, S'_i = ie_1, 0\le i \le k\}. $$
So $\bP(G_1) = (2d)^{-2k}$.
Then writing $G_2= \{  S[1, T_{n/2}] \cap S'[1, T'_{n/2}]  \neq \emptyset \}$, 
\bas
\bP(G_2|G_1) &\le
\bP( S[k+1, T_{n/2}] \cap S'[1, T'_{n/2}] \neq \emptyset | G_1) 
+ \bP( S[1, T_{n/2}] \cap S'[k, T'_{n/2}] \neq \emptyset | G_1) \\
&\le  \fract18 d^{-2}. 
\end{align*}

Let $H_\pm$ be the left and right faces (in the $e_1$ direction) of the cube
$Q_{n/2}$. We have
$$ \bP( S_{T_{n/2}} \in H_- |G_1 ) \ge (2d)^{-1}. $$
So if $G_3=  G_2^c \cap \{  S_{T_{n/2}} \in H_-, S'_{T'_{n/2}} \in H_+\}$,
\begin{align*}
  \bP( G_3 | G_1) &\ge
  \bP( S_{T_{n/2}} \in H_-,  S'_{T'_{n/2}} \in H_+  |G_1 ) - 
 \bP( G_2   | G_1) \\
&\ge (2d)^{-2} - (8d^2)^{-1} = (8d^2)^{-1}.
\end{align*}
If $G_3$ occurs then let $G_4$ be the event that $S'$ then (i.e. after
time $T'_{n/2}$ leaves $Q_n$ before it hits hits $\bH_0$, 
and $S$ leaves $Q_n$ before it hits
$\bH_0$. By comparison with a one-dimensional SRW each of these events
has probability at least $1/3$, so $\bP(G_4|G_3) \ge 1/9$. 
On the event $G_1 \cap G_3 \cap G_4$ the path $S[0, T_n]$ is contained in 
$[-n,0]\times [-n,n]^{d-1} \cup Q_{n/2}$,  and $\pi_1(S'_{T'_n}=n)$, so that
$d(S'_{T'_n} ,S[0, T_n]) \ge n/2$. The same bound holds if we interchange
$S'$ and $S$, and so we deduce that
$$ \bP(Z_n \ge \half n|F_n ) \ge  \bP(\{Z_n \ge \half n\} \cap F_n)
\ge \bP( G_1 \cap G_3 \cap G_4) \ge (2d)^{-2k} (8d^2)^{-1} 9^{-1}. $$
\qed

\sm {\bf Remark.}
The result in $d \ge 5$ is much easier than $d =2$, since with high probability
$S$ and $S'$ do not interesect. The 
proof for $d=2$ uses the fact that if the two processes get too close,
then by the Beurling estimate they hit with high probability. 

\ms
In the remainder of this section we give some estimates on the length of 
LERW paths in $\bZ^d$ with $d \ge 5$. We fix $D \subset \bZ^d$ and
$N \ge 1$ such that $Q_N = Q(0,N) \subset D$. We will be interested in 
the number of steps the LERW from $0$ to $\pd D$ takes up to its
first exit from $Q_N$. Let $S$ be SRW on $\bZ^d$ with $S_0=0$. 
Let $$ L= \sL ( \sE^F_{D^c} (S)). $$
In words, $L$ is the loop erasure of $S$ up to its first hit
on the boundary of $D$.

Our estimate will be broken down into studying $L$ in `shells'
$Q_{n+m} \setminus Q_n$. For this purpose, let us fix $n,m$ such that 
$16 \le n < n+m \le N$, with $m \le n/8$.
Let $$ \al = \sE^F_{ \pd_i Q_n } L , \q L' = \sB^F_{\pd_i Q_n}L. $$
So $\al$ is the path $L$ up to its first hit on $\pd_i Q(0,n)$, and
$L'$ is the path of $L$ from this time on. 
See Figure \ref{fig:box-setup}.

Let us condition on $\al$. Let $x_0 \in \pd_i Q_n$ be the endpoint 
of $\al$. When $x_0 \in \bH_n$, we let $ x_1 = x_0 + (m/2)e_1 $
and set
$$ A=A(x_0) = Q(x_1, m/4 ), 
\quad, A^* = Q(x_1, 3m/8 ). $$
When $x_0$ lies on one of the other
faces of $Q_n$, we replace $e_1$ by the unit vector pointing
towards that faces to define $x_1$ and $A(x_0)$. 
See Figure \ref{fig:box-setup}.
\begin{figure}
\centerline{\includegraphics{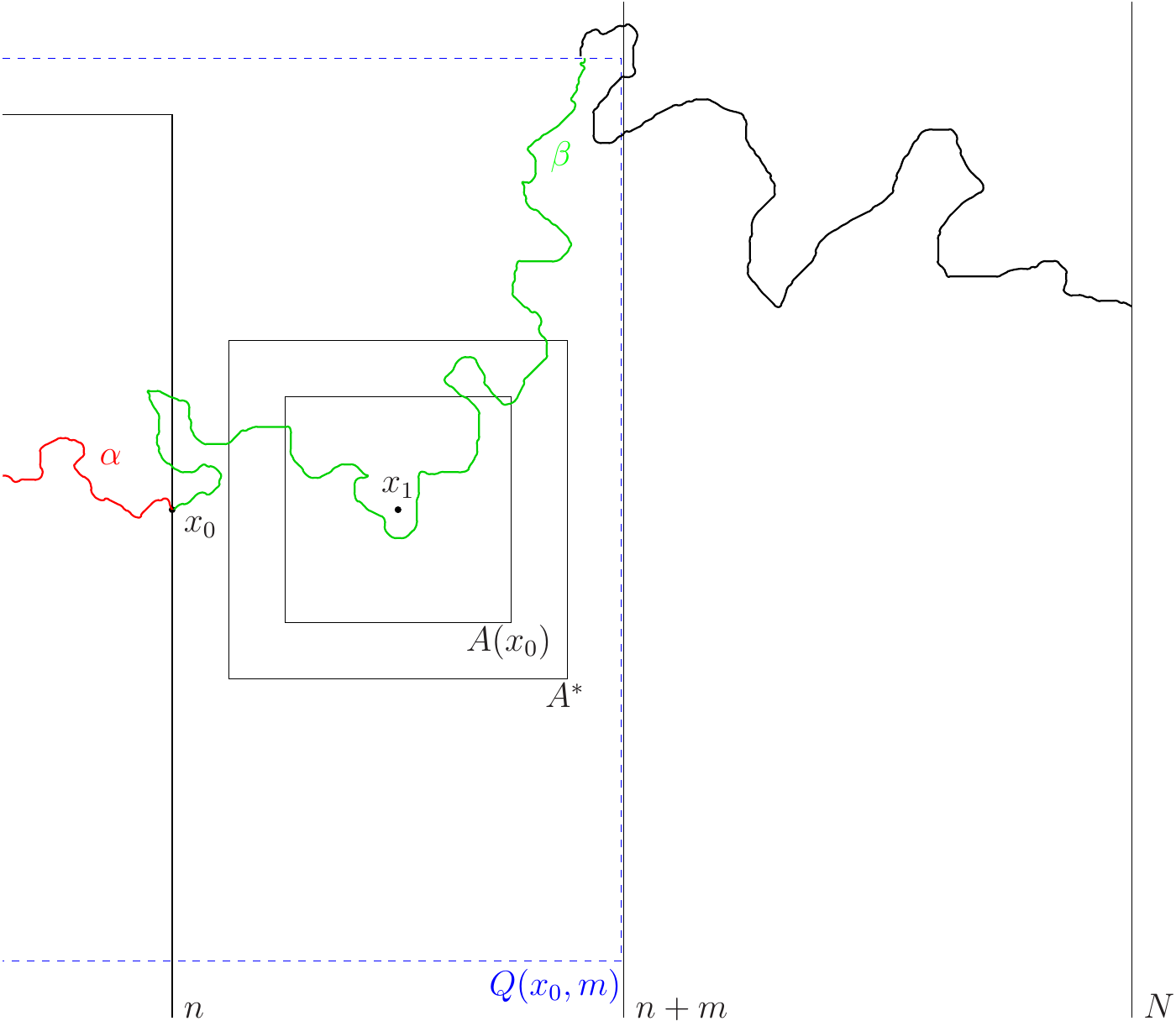}}
\caption{\label{fig:box-setup} Setup and notation for the piece
of the LERW in the shell $Q_{n+m} \setminus Q_n$.}
\end{figure}
Set $$ \beta = \sE^F_{\pd_i Q(x_0,m)} L' . $$

Let $\wt X^z$ be $S^z$ conditioned on $\{ \tau_D < T^+_\al\}$. 
While the process $\wt X^z$ depends on $\al$, our notation will not
emphasize this point.
Write $\wt X$ for $\wt X^{x_0}$, and $\wt G_D(x,y)$ for the Green
function for $\wt X^x$. By the domain Markov property, Lemma \ref{L:dmp}, 
we have (conditional on $\al$) that 
\be
  L'  \, \, \eqd  \, \, 
  \sL( \sE^F_{\pd D} \wt X ). 
\ee
We write $\wt T$, $\wt \tau$, etc.~for hitting and exit times by $\wt X$.
Set
$$ h(x) = \bP^x( \tau_D < T_\al). $$
Then 
\be \label{e:wtG}
 \wt G_D(x,y) = \frac{ h(y)}{h(x)} G_D(x,y), \, x,y \in D- \alpha.
\ee
The standard Harnack inequality 
(see \cite{La2}) gives
\be \label{e:hiA}
 h(y) \asymp h(x_1), \q y \in A^*, 
\ee
and thus
\be \label{e:hGn}
   \wt G_D(x,y) \asymp G_D(x,y), \, x,y \in A^*.
\ee   

\begin{lemma} \label{L:Mub}
Let $d \ge 3$. For any $\al$ we have
\begin{align}
 \bE( H_A(\beta)| \al)  &\le c_1 m^2, \\
 \bE( H_A(\beta)^2|\al)  &\le c_1 m^4.
\end{align}
\end{lemma}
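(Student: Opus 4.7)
The plan is to dominate $H_A(\beta)$ by the occupation time of $A$ under the conditioned walk $\widetilde X$ and then reduce both estimates to Green function sums. Since $\beta$ is a subpath of $L'$ and hence self-avoiding, the counter $H_A(\beta)$ equals $|\beta \cap A|$. Every vertex of $\beta$ lies on $\widetilde X[0,\widetilde\tau_D]$, so counting distinct vertices against total visits gives the pointwise bound
\begin{equation*}
H_A(\beta) \;\le\; H_A\bigl(\widetilde X[0,\widetilde\tau_D]\bigr) \;=\; \sum_{k=0}^{\widetilde\tau_D-1} 1_{\widetilde X_k\in A}.
\end{equation*}
Taking first and second moments, it therefore suffices to show
\begin{equation*}
\sum_{v\in A} \widetilde G_D(x_0,v) \le C m^2, \qquad
\sum_{u,v\in A}\widetilde G_D(x_0,u)\,\widetilde G_D(u,v) \le C m^4,
\end{equation*}
the second bound coming from expanding the square, splitting according to which of $u,v$ is hit first by $\widetilde X$, and applying the strong Markov property.

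To prove the first bound, note that since $A\subset (A^*)^o$, the walk $\widetilde X$ must enter $A^*$ before visiting any $v\in A$. Conditioning on the entry point and using the strong Markov property for $\widetilde X$ (a Doob $h$-transform of SRW, so Markov),
\begin{equation*}
\widetilde G_D(x_0,v) \;=\; \sum_{z\in\pd_i A^*} \widetilde{\bP}^{x_0}\bigl(\widetilde T_{A^*}<\widetilde\tau_D,\ \widetilde X_{\widetilde T_{A^*}}=z\bigr)\,\widetilde G_D(z,v).
\end{equation*}
For $z\in\pd_i A^*$ and $v\in A$ both points lie in $A^*$, so \eqref{e:hGn} gives $\widetilde G_D(z,v) \asymp G_D(z,v) \le G(z,v) \le C|z-v|^{-(d-2)}$. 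Since $|z-v|\ge m/8$ and $|A|\asymp m^d$, the inner sum satisfies $\sum_{v\in A}\widetilde G_D(z,v) \le C m^2$. The hitting distribution has total mass at most one, and summing over $z$ yields the first-moment estimate.

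For the second moment, the inner sum $\sum_{v\in A}\widetilde G_D(u,v)$ with $u\in A$ is handled directly by \eqref{e:hGn}: both $u,v\in A^*$, so $\widetilde G_D(u,v)\asymp G_D(u,v)$ and the standard $d\ge 3$ bound $\sum_{v\in B(u,m)}|u-v|^{-(d-2)} \asymp m^2$ gives $\sum_{v\in A}\widetilde G_D(u,v)\le Cm^2$. Combining with $\sum_{u\in A}\widetilde G_D(x_0,u)\le Cm^2$ from the first step produces the $Cm^4$ bound.

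The main technical obstacle is that $x_0 \in \pd_i Q_n$ does not lie in $A^*$, so \eqref{e:hGn} cannot be applied directly from $x_0$: indeed, $h(x_0)$ can be very small and $\widetilde G_D(x_0,\cdot)$ is not easily comparable to $G_D(x_0,\cdot)$ on a pointwise basis. The first-entry decomposition into $A^*$ is exactly what bypasses this: it replaces the unfriendly starting point $x_0$ by a friendly starting point $z\in\pd_i A^*$ at the cost of a probability factor that is harmlessly absorbed into the constant.
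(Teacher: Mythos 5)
Your proof is correct and follows essentially the same route as the paper's: dominate $H_A(\beta)$ by the occupation time of $A$ under the conditioned walk $\widetilde X$, use a first-entrance decomposition at $\partial_i A^*$ to get from the awkward starting point $x_0$ to a point where the comparison \eqref{e:hGn} applies, and then conclude with the standard $d\ge 3$ Green function sums for the first and second moments. The only immaterial difference is that the paper truncates the walk at its exit from $Q(x_0,m)$ rather than from $D$.
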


\proof
This is a standard computation with Green functions. Let
$B = Q(x_0, m)$. Then, since $\beta$ is a subset of the 
path of $\wt X$, we have 
$$ H_A(\beta) \le  \sum_{k=0}^{\wt \tau_{B}} 1_{ (\wt X \in A) }
  = H_A( \sE^F_{\pd_i B} \wt X) =: \wt H. $$
Then for $p=1,2$, 
$$ \bE^{x_0} (\wt H^p| \al)
= \bE^{x_0}\Big( 1_{( \wt T_{A^*} <\wt \tau_B)} \bE^{\wt X_{\wt T_{A^*}}}( {\wt H}^p) \Big)
\le \max_{ z\in \pd_i A^*} \bE^z \wt H^p.  $$
Let $z \in \pd_i A^*$. Then using \eqref{e:hGn}
\begin{align*}
 \bE^z (\wt H|\al)  &= \sum_{y \in A} \wt G_B(z, y)
 \le c |A| \max_{y \in A} G_B(z,y) \le c' m^2 m^{2-d} = c' m^2. 
\end{align*}
Also since on $A^*$ we have
$\wt G_B \asymp G_B \le G$,
\begin{align*}
 \bE^z (\wt H^2|\al)  &\le 2 \sum_{k=0}^\infty \sum_{k=j}^\infty
1_{( k\le \wt \tau_B)} 1_{( j \le \wt \tau_B)} 1_{(\wt X_k \in A)}
1_{(\wt X_j \in A)} \\
&\le 2 \sum_{x \in A} \sum_{ y \in A} \wt G_B(z,x)\wt G_B(x,y) \\
&\le c |A| m^{2-d} \max_{x \in A} \sum_{ y \in A} \wt G(x,y)
 \le c' m^4.  
\end{align*}
\qed 

\begin{remark}
The same argument works if we consider
$\bE( H_{Q(x_1, \lam m)}(\beta)^p |\al)$, $p=1,2$, for any $\lam \in (0, \frac12)$.
\end{remark}

We now turn to the harder problem of obtaining a lower bound on $\bE H_A(\beta)$,
and begin with a boundary Harnack inequality which extends 
\cite[Proposition 3.5]{Mas} to higher dimensions. 
See \cite{BK} for further extensions.
In what follows $\sR_m = \bH_m \cap Q_m$ is the `right hand face' of $Q_m$.

\begin{lemma} \label{L:ubh}
Assume $d \ge 1$. Let $\sK$ be an arbitrary nonempty subset of 
$[-m+1,0] \times [-m+1,m-1]^{d-1}$. For all $m \ge 1$ 
and all $\sK$ we have
\be \bP^0 \big( S(\tau_{Q(0,m-1)}) \in \sR_m \,\big|\, \tau_{Q(0,m-1)} < T^+_\sK \big) \ge (2d)^{-1}.
\ee
\end{lemma}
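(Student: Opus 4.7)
The plan is to reduce the conditional-probability estimate to a pointwise bound on the (unconditioned) harmonic measure $h(y) := \bP^y(S_\tau \in \sR_m)$ of the right face, where $\tau := \tau_{Q(0,m-1)}$. Using strong Markov at $T^+_\sK$ together with the symmetry identity $\bP^0(S_\tau \in \sR_m) = (2d)^{-1}$, one rewrites
\begin{align*}
\bP^0(S_\tau \in \sR_m, \tau < T^+_\sK) = \tfrac{1}{2d} - \bE^0[\mathbf{1}_{T^+_\sK \le \tau}\, h(S_{T^+_\sK})],
\end{align*}
and a short algebraic manipulation shows that the desired inequality is equivalent to $\bE^0[\mathbf{1}_{T^+_\sK \le \tau}\,(1-2d\,h(S_{T^+_\sK}))] \ge 0$. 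It therefore suffices to establish the pointwise bound $h(y) \le (2d)^{-1}$ for every $y \in \sK$; since $\sK \subset Q(0,m-1) \cap \{\pi_1 \le 0\}$, I will prove this for every $y \in Q(0,m-1)$ with $\pi_1(y) \le 0$.

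For such $y$, the walk (having unit-length steps) must first cross $\bH_0 = \{\pi_1 = 0\}$ before reaching $\sR_m$; applying strong Markov at $T_{\bH_0}[S]$ gives $h(y) \le \max_{z \in \bH_0 \cap Q(0,m-1)} h(z)$. For $z \in \bH_0$, reflection across $\bH_0$ identifies $h(z)$ with $\bP^z(S_\tau \in L)$, where $L = \{-m\} \times [-m,m]^{d-1}$ is the left face, so that $2h(z) = f_1(z) := \bP^z(S_\tau \in \sR_m \cup L)$. It thus remains to prove $f_1(z) \le 1/d$ on $\bH_0 \cap Q(0,m-1)$.

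For this, I decompose $S$ into an i.i.d.\ direction sequence $(D_k)$ uniform on $\{1,\ldots,d\}$ together with $d$ independent one-dimensional SRWs $\tilde W^{(j)}$ driving the respective coordinates, so that $\pi_j(S_k) = z_j + \tilde W^{(j)}_{N_j(k)}$ with $N_j(k) = \sum_{\ell \le k}\mathbf{1}_{D_\ell = j}$. The walk exits $Q(0,m-1)$ in direction $j$ precisely when the $(D_k)$-sequence allows direction $j$ to record $\tilde\tau_j := \inf\{n : |z_j + \tilde W^{(j)}_n| = m\}$ steps before any other direction records its own $\tilde\tau_{j'}$; equivalently, $f_1(z) = \bP^z(\text{direction 1 wins the race})$. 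Conditional on $(\tilde\tau_j)_j$, this race probability is coordinatewise non-decreasing in $\tilde\tau_j$ for $j \ne 1$ (an opponent needing more wins can only help direction $1$). At $z \in \bH_0$ the distribution of $\tilde\tau_1$ coincides with that at the origin, while the standard stochastic dominance $\tilde\tau_j^{(z_j)} \le_{\mathrm{st}} \tilde\tau_j^{(0)}$ (the hitting time of $\{-m,m\}$ by $1$-D SRW is stochastically maximised at the midpoint) applies independently across $j \ge 2$. Combining these with the independence of the $\tilde W^{(j)}$'s yields $f_1(z) \le f_1(0) = 1/d$, the last equality by the full symmetry of $S$ at the origin.

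The main obstacle will be this last step: the race-winning event is not a monotone functional of the trajectory of $S$ itself, so one has to carefully exploit the independence of the $\tilde W^{(j)}$'s and of $(D_k)$ to localise the monotonicity to the thresholds $(\tilde\tau_j)_j$ and then push the independent stochastic dominance through the race probability.
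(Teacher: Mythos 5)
Your reduction is the same as the paper's: both proofs hinge on showing the pointwise maximum principle $h(y)\le h(0)=(2d)^{-1}$ for $y$ in the left half of the cube and then removing the conditioning (you via strong Markov at $T^+_\sK$, the paper via optional stopping of the martingale $h(S(k))$ — these are identical computations). Where you genuinely diverge is in the proof of the pointwise bound. The paper first moves $y$ to its projection on $\bH_0$ by a translation coupling (your strong Markov at $T_{\bH_0}$ is a clean substitute, arguably tidier since you only need the maximum over the hyperplane), and then compares $h(z')$ with $h(0)$ by a \emph{continuous-time} coordinatewise reflection coupling. You instead use the reflection symmetry of points on $\bH_0$ to convert the problem into bounding $f_1(z)=\bP^z(\hbox{exit through a } \pm e_1 \hbox{ face})$ by $f_1(0)=1/d$, and then run a direction-race decomposition that localises all the difficulty into the one-dimensional statement that the exit time of a symmetric interval is stochastically maximised at the midpoint. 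The race argument itself is sound: the conditional win probability is monotone in the opponents' thresholds, the thresholds are independent across coordinates, and the first coordinate's threshold has the same law at $z\in\bH_0$ as at $0$.

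The one soft spot is that you assert the one-dimensional stochastic dominance as ``standard'' without proof, and this is exactly where the paper's technical work sits. In discrete time the naive mirror coupling of the walks from $x$ and from $0$ fails when $x$ is odd (the two walks can never meet and their order can flip), which is precisely why the paper passes to continuous time and introduces the three-Poisson-clock device for adjacent coordinates. The fact you need is true and can be established either by that coupling or by an elementary induction showing that $x\mapsto\bP^x(T>n)$ is symmetric, non-increasing and concave on $\{0,\dots,m\}$ for every $n$ (the concavity is needed to close the induction at $x=0$). You should either supply such an argument or a precise reference; as written, the crux of the lemma is black-boxed. With that ingredient filled in, your proof is complete and is a legitimately different, rather modular, route to the same estimate.
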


\begin{proof}
Let $h(z) = \bP^z \big( S_{\tau_{Q(0,m-1)}} \in \sR_m \big)$, $z \in Q(0,m-1)$. 
By symmetry we have $h(0)= 1/2d$. We first show that 
\eqn{e:max-property}
{ h(z)  
  \le h(0) \text{ for all 
      $z \in ([-m+1,0] \times [-m+1,m-1]^{d-1}) \cap \bZ^d$}. }
      
Let $z' = (0,z_2,\dots,z_d)$. 
Let $X^z$ and $X^{z'}$ be  simple random walks with
starting points $z$ and $z'$ respectively; we have
$ h(z) = \bP( X^z_{\tau_A} \in \sR_n)$, with a similar expression
for $h(z')$.
We couple these random walks 
by taking $X^z = z+ S$, $X^{z'}=z'+S$, where $S$ is a 
SRW with $S_0=0$. Then
$\{ X^{z}_{\tau_A} \in \sR_n\} \subset \{ X^{z'}_{\tau_A} \in \sR_n\}$,
and so $h(z) \le h(z')$.

To prove that $h(z') \le h(0)$ we use a coupling of continuous
time random walks $Y$, $Y'$ with $Y_0=0$, $Y'_0=z'$; these have the
same exit distribution as the discrete time walk $S$.
Recall that $\pi_j$ is the projection onto the $j$th coordinate
axis, so that $\pi_j(Y_t)$ gives the $j$th coordinate of $Y_t$;
each coordinate is a continuous time simple random walk (run at rate $1/d$) on
$\bZ$.

The coupling is as follows. If at time $t$ we have
$\pi_j (Y_t)=\pi_j(Y'_t)$ then we run the two $j$th coordinate
processes together, so $\pi_j (Y_{t+s})=\pi_j(Y'_{t+s})$ for
all $s \ge 0$

Note that  we have $|\pi_j (Y_t)| \le |\pi_j(Y'_t)|$ when $t=0$;
the coupling will preserve this inequality for all $t \ge 0$. 
If $|\pi_j (Y_t) - \pi_j(Y'_t)|\ge 2$ then we use reflection coupling,
so that $\pi_j (Y_t)$ and $\pi_j(Y'_t)$ jump at the same time,
and in opposite directions. 
Finally, suppose that $|\pi_j (Y_t) - \pi_j(Y'_t)| =1$,
and let $a=\pi_j (Y_t)$, $a+1= \pi_j (Y'_t)$.
We take three independent Poisson processes on $\bR_+$,
$\sP_1, \sP_2, \sP_3$; each with rate $1/2d$, and make the first
jump of either $\pi_j (Y)$ or $\pi_j(Y')$ after time $t$ to be
at time $t+T$, where
$T$ is the first point in $\sP_1 \cup \sP_2 \cup \sP_3$.
If $T \in \sP_1$  we set
$\pi_j (Y_{t+T}) = a-1$, $\pi_j (Y'_{t+T}) = a+2$.
If $T \in \sP_2$ then we set
$\pi_j (Y_{t+T}) = a+1$, $\pi_j (Y'_{t+T}) = a+1$, and if $T \in \sP_3$ then 
$\pi_j (Y_{t+T}) = a$, $\pi_j (Y'_{t+T}) = a$.
With this coupling we have
$\{ Y'_{\tau_A(Y') } \in \sR_n\} \subset \{ Y_{\tau_A(Y)} \in \sR_n\}$,
and so $h(z') \le h(0)$.

Stopping the bounded martingale $h(S(k))$ at $\tau_{Q(0,m-1)} \wedge T_\sK$, and 
using \eqref{e:max-property} we get
\eqnsplst
{ h(0)
  &= \sum_{y \in \sK} h(y) \bP^0 \big( S(\tau_{Q(0,m-1)} \wedge T^+_\sK) = y \big)
    + \bP^0 \big( \tau_{Q(0,m-1)} < T^+_\sK,\, S(\tau_{Q(0,m-1)}) \in \sR_m \big) \\
  &\le h(0) \bP \big( \tau_{Q(0,m-1)} > T^+_\sK )
    + \bP( \tau_{Q(0,m-1)} < T^+_\sK, \, S(\tau_{Q(0,m-1)}) \in \sR_m \big). }
Rearranging gives the statement of the lemma. \qed
\end{proof}

\sms
We will also need two extensions of Lemma \ref{L:ubh} that we
prove next.

\begin{lemma} \label{L:ubh-ext}
Assume $d \ge 3$. Let $N \ge 1$ and $Q_{4N} \subset D \subset \bZ^d$.
Let $8 \le m \le N/2$ and $n \le N$.
Suppose that $\sK$ is an arbitrary nonempty subset of $Q_n$, and
$x_0 \in \sK \cap \bH_n$. Let $z_0 = x_0 + m e_1$.
There exists a constant $c = c(d) > 0$ such that 
\be \bP^{z_0}( T_{Q(x_0,m/2)} > \tau_D \,|\, T_{\sK} > \tau_D ) 
    \ge c.
\ee
\end{lemma}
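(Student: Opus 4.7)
Set $B := Q(z_0, m-1)$ and let $\sR := B \cap \bH_{n+2m-1}$ denote its right face. Two key geometric facts drive the argument: (i) $B \cap \sK = \emptyset$ since $B \subset \{\pi_1 \ge n+1\}$ while $\sK \subset Q_n$, so $h(y) := \bP^y(T_\sK > \tau_D)$ is harmonic on $B$; and (ii) writing $W := Q(x_0, m/2)$, after the translation $y \mapsto y - z_0$ the nonempty set $W \cap B$ is contained in $[-m+1,0] \times [-m+1,m-1]^{d-1}$, meeting the hypothesis of Lemma~\ref{L:ubh}. Applying that lemma with obstacle $W \cap B$ (and noting $T_W^+ = T_W$ since $z_0 \notin W$) yields
\begin{equation*}
\bP^{z_0}\bigl(T_W > \tau_B,\; S(\tau_B) \in \sR\bigr) \;\ge\; (2d)^{-1}\, \bP^{z_0}(T_W > \tau_B),
\end{equation*}
with $\bP^{z_0}(T_W > \tau_B) \ge c_0 > 0$ by a one-coordinate gambler's ruin estimate applied to $\pi_1(S)$.

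Next, by the strong Markov property at $\tau_B$ (using that $T_\sK > \tau_B$ is automatic from (i)), and writing $\tilde h(y) := \bP^y(T_W \wedge T_\sK > \tau_D)$,
\begin{equation*}
\bP^{z_0}(T_W \wedge T_\sK > \tau_D) \;\ge\; \bE^{z_0}\bigl[\mathbf{1}_{\{T_W > \tau_B,\, S(\tau_B) \in \sR\}}\, \tilde h(S(\tau_B))\bigr],
\end{equation*}
while $\bP^{z_0}(T_\sK > \tau_D) = \bE^{z_0}[h(S(\tau_B))]$. The task thus reduces to showing the first expectation dominates a constant fraction of the second.

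The main obstacle is this last comparison. Heuristically, since $\sK$ lies entirely to the left of $B$, the walk conditioned on $\{T_\sK > \tau_D\}$ (equivalently, the $h$-transform of $S$) is pushed rightward, away from $\sK$ and hence from $W \subset \{\pi_1 \le n+m/2\}$ as well; combined with a Green-function estimate for $y \in \sR$ giving $\bP^y(T_W < \infty) \le 1 - c$ in $d \ge 3$ (since $y$ lies at $\ell_\infty$-distance $\ge 3m/2 - 1$ from the ball $W$ of radius $m/2$), this should yield the required comparison. To make the ``$h$-conditioned walk exits through $\sR$'' part rigorous, I would adapt the reflection/translation coupling argument used in the proof of Lemma~\ref{L:ubh}, now applied to the harmonic function $h$ on $B$: pairing $y \in \sR$ with its $z_0$-reflection $2z_0 - y$ on the opposite face of $B$ (closer to $\sK$), a translation-coupling of SRWs from the two points gives $h(y) \ge h(2z_0 - y)$, and combined with the symmetry of the SRW exit distribution from the center $z_0$ of $B$ this yields the concentration of the $h$-weighted exit mass on $\sR$ needed to conclude $\bP^{z_0}(T_W \wedge T_\sK > \tau_D) \ge c\, \bP^{z_0}(T_\sK > \tau_D)$.
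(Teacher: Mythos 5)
Your setup is sound as far as it goes: $B=Q(z_0,m-1)$ is disjoint from $\sK$, Lemma \ref{L:ubh} does apply to the obstacle $Q(x_0,m/2)\cap B$ after translation, and the two displayed reductions (harmonicity of $h$ on $B$, strong Markov at $\tau_B$) are correct. But the proof stops exactly where the difficulty begins, and both ideas you offer for the remaining comparison fail. The claimed monotonicity $h(y)\ge h(2z_0-y)$ for $y$ on the right face is unjustified and false in general: the point reflection through $z_0$ moves \emph{all} coordinates, and $\sK$ is an arbitrary subset of $Q_n$, so placing the bulk of $\sK$ on $\bH_n$ offset by order $10m$ in the $e_2$ direction makes the right-face point strictly \emph{closer} to $\sK$ than its left-face partner. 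A translation (or even a pure first-coordinate reflection) coupling gives no containment of the events $\{T_\sK\le\tau_D\}$ for the two starting points, because neither $\sK$ nor $D$ is invariant under the map; this is precisely why the paper avoids a direct coupling and instead proves $g(y)\le C g(z_1)$ uniformly over $y\in\pd Q(x_0,8m)$ via the sub/martingale argument in two mirror-image cubes of size $2N$. Your pairing also only compares the right face with the left face; the other $2d-2$ faces of $\pd B$ carry a constant fraction of the exit mass and are not addressed, and $h$ is far from constant on $\pd B$ (indeed $x_0\in\sK$ lies on $\pd B$, so $h$ vanishes there), which makes the weighted comparison genuinely delicate.

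The more fundamental gap is circularity. Even granting concentration of the $h$-weighted exit mass on the right face, you still need $\tilde h(y)\ge c\,h(y)$ for right-face points $y$ — but that is the statement of the lemma itself with $z_0$ replaced by $y$, a point still only at distance $O(m)$ from both $\sK$ and $Q(x_0,m/2)$. The estimate $\bP^y(T_{Q(x_0,m/2)}<\infty)\le 1-c$ controls only the unconditioned walk; under the conditioning $\{T_\sK>\tau_D\}$ the law of the walk is severely tilted whenever $h$ is small, and decoupling $\{T_{Q(x_0,m/2)}>\tau_D\}$ from $\{T_\sK>\tau_D\}$ is the entire content of the lemma. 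What is missing is a renewal structure that terminates the recursion: the paper counts the crossings $R$ from $\pd Q(x_0,8m)$ into $Q(x_0,m/2)$ before $T_\sK\wedge\tau_D$, shows $\bP(R\ge\ell)\le\gamma^{\ell}$, and sums the geometric series against the uniform bound $g(y)\le Cg(z_1)$. Without an ingredient of this kind (or an equivalent iteration), your argument does not close.
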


\begin{proof}
It is easy to see that the statement holds when $m \ge n/8$, since then
$\bP^{z_0} ( T_{Q_{n+m/2}} > \tau_D ) \ge \bP^{z_0} ( T_{Q_{n+m/2}} = \infty ) \ge c$.
Henceforth we assume that $m < n/8$.

Let $f(z) = \bP^z ( T_{\sK} > \tau_D )$ and 
$g(z) = \bP^z ( T_{\sK} \wedge T_{Q(x_0,m/2)} > \tau_D)$, so that we have to 
prove $f(z_0) \le C g(z_0)$. Let $z_1 = x_0 + 8 m e_1$. Due to the
Harnack principle, it is sufficient to show that $f(z_1) \le C g(z_1)$.

We first show that for all $y \in \pd Q(x_0, 8 m)$ we have
$g(y) \le C g(z_1)$. Let us write $\bH$ for
the hyperplane $\bH_{n + 4 m}$, and $\bH'$ for the hyperplane
$\bH_{n + 2 m}$. Observe that $\bH$ and $\bH'$ are both disjoint 
from $\sK \cup Q(x_0,m/2)$, and they both separate 
$\sK \cup Q(x_0,m/2)$ from $z_1$.

If $y \in \pd Q(x_0, 8 m)$ lies on the same side of $\bH'$
as $z_1$, then $y$ is at least distance $m$ from $\sK \cup Q(x_0,m/2)$, 
and this is comparable to the distance between $y$ and $z_1$. 
Hence for such $y$, the Harnack principle
implies $g(y) \le C g(z_1)$. 

Suppose now that $\bH'$ separates $y$ from $z_1$. Let $Q^{(1)}$ and $Q^{(2)}$
be cubes that are both translates of $Q_{2N}$, such that:\\
(i) the right hand face of $Q^{(1)}$ and the left hand face of $Q^{(2)}$ coincide;\\
(ii) the common set $\cR = Q^{(1)} \cap Q^{(2)}$, is contained in $\bH$; \\
(iii) the center of $\cR$ (viewed as a $(d-1)$-dimensional cube), 
is the point $x_0 + 4 m e_1$. \\
Since $g(S(n \wedge \tau_{Q^{(1)}}))$ is a submartingale under $\bP^y$, we have
\eqn{e:submart}
{ g(y) 
  \le \bE^y ( g(\tau_{Q^{(1)}}) )
  = \sum_{w \in \pd Q^{(1)} \setminus \cR} g(w) \, \bP^y ( S(\tau_{Q^{(1)}}) = w )
    + \sum_{u \in \cR} g(u) \, \bP^y ( S(\tau_{Q^{(1)}}) = u ). }
Since $g(S(n \wedge \tau_{Q^{(2)}}))$ is a martingale under $\bP^{z_1}$, 
we also have
\eqn{e:mart}
{ g(z_1) 
  = \bE^{z_1} ( g(\tau_{Q^{(2)}}) )
  = \sum_{w' \in \pd Q^{(2)} \setminus \cR} g(w') \, \bP^{z_1} ( S(\tau_{Q^{(2)}}) = w' )
    + \sum_{u \in \cR} g(u) \, \bP^{z_1} ( S(\tau_{Q^{(2)}}) = u ). }
The mirror symmetry between $Q^{(1)}$ and $Q^{(2)}$, as well as the Harnack
principle implies that
\eqnsplst
{ \bP^y ( S(\tau_{Q^{(1)}}) = u ) 
  &\le C \bP^{z_1} ( S(\tau_{Q^{(2)}}) = u ) \\
  \bP^y ( S(\tau_{Q^{(1)}}) = w ) 
  &\le C \bP^{z_1} ( S(\tau_{Q^{(2)}}) = w' ), }
where $w'$ is the mirror image of $w \in \pd Q^{(1)} \setminus \cR$
in the hyperplane $\bH$. We also have $g(w) \le 1$, $w \in \pd Q^{(1)} \setminus \cR$,
and $g(w') \ge c$, $w' \in \pd Q^{(2)}$. These observations and \eqref{e:submart}
and \eqref{e:mart} together imply $g(y) \le C g(z_1)$. 

We now show the desired inequality $f(z_1) \le C g(z_1)$. 
Let $1 \le R < \infty$ denote the random variable that counts the
number of times $S^{z_1}$ makes a crossing from $\pd Q(x_0, 8 m)$
to $Q(x_0,m/2)$ before $T_\sK \wedge \tau_D$. We have
\eqnst
{ \bP^{z_1} ( R \ge \ell )
  \le \left( \max_{y \in \pd Q(x_0, 8 m)} \bP^{y} ( T_{Q(x_0,m/2)} < \infty ) \right)^\ell
  \le \gamma^\ell }
with some $0 < \gamma = \gamma(d) < 1$.

Using the strong Markov property at the time when the $\ell$-th crossing
has occurred, we can write 
\eqnsplst
{ f(z_1)
  &= \sum_{\ell = 0}^\infty \bP^{z_1} ( R = \ell,\, T_\sK > \tau_D )
  = g(z_1) + \sum_{\ell = 1}^\infty \bP^{z_1} ( R = \ell,\, T_\sK > \tau_D ) \\
  &\le g(z_1) + \sum_{\ell = 1}^\infty \bP^{z_1} ( R \ge \ell ) \,
     \max_{z \in Q(x_0,m/2)} \bP^z ( (T_{Q(x_0,m/2)} \wedge T_\sK) \circ \Theta_{\tau_{Q(x_0, 8 m)}} > \tau_D ) \\
  &\le g(z_1) + \sum_{\ell = 1}^\infty \gamma^\ell \max_{y \in \pd Q(x_0, 8 m)} g(y) \\
  &\le g(z_1) + C g(z_1). }
This completes the proof of the Lemma.
\end{proof}

\begin{lemma} \label{L:ubh-tauD}
Assume $d \ge 3$. Let $N \ge 1$ and $Q_{4N} \subset D \subset \bZ^d$.
Let $8 \le m \le N/2$ and $n \le N$.
Suppose that $\sK$ is an arbitrary nonempty subset of $Q_n$, and
$x_0 \in \sK \cap \bH_n$. Let $\cR_{n,m}$ denote the 
right hand face of $Q(x_0,m)$. There exists a constant $c = c(d) > 0$ 
such that 
\be \bP^{x_0} \big( S(\tau_{Q(x_0,m)}) \in \cR_{n,m} \,\big|\, T^+_\sK > \tau_D \big) 
    \ge c.
\ee
\end{lemma}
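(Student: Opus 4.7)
The plan is to decouple two issues: the distribution of the exit point from $Q := Q(x_0,m)$ relative to $\sK$ (handled by Lemma \ref{L:ubh}), and the harmonic-measure quantity $h(y) := \bP^y(T_\sK > \tau_D)$ on $\pd Q$. Writing $\tau = \tau_Q$, $A = \{S(\tau) \in \cR_{n,m}\}$, $B = \{T^+_\sK > \tau_D\}$, and extending $h$ by zero on $\sK$, the strong Markov property at $\tau$ gives
\begin{equation*}
\bP^{x_0}(B) = \bE^{x_0}\!\left[\mathbf 1_{\{T^+_\sK > \tau\}}\, h(S_\tau)\right],
\qquad
\bP^{x_0}(A \cap B) = \bE^{x_0}\!\left[\mathbf 1_{A \cap \{T^+_\sK > \tau\}}\, h(S_\tau)\right].
\end{equation*}
Since $\sK \subset Q_n$ and $(x_0)_1 = n$, the set $\sK \cap Q$ lies in the closed left half $\{y : y_1 \le (x_0)_1\} \cap Q$ of $Q$, exactly the configuration of Lemma \ref{L:ubh}. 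The proof of that lemma goes through verbatim with cube radius $m$ in place of $m-1$, yielding
\begin{equation*}
\bP^{x_0}(A,\, T^+_\sK > \tau) \;\ge\; (2d)^{-1}\, \bP^{x_0}(T^+_\sK > \tau). \qquad (\ast)
\end{equation*}

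The main step is a boundary-Harnack-type comparison of $h$ on $\pd Q$: setting $z_R := x_0 + (m+1) e_1 \in \cR_{n,m}$, I claim
\begin{equation*}
h(y) \le C\, h(z_R) \text{ for all } y \in \pd Q \setminus \sK,
\qquad
h(z) \asymp h(z_R) \text{ for all } z \in \cR_{n,m}. \qquad (\ast\ast)
\end{equation*}
The second inequality is a short Harnack chain in $\{w : w_1 > n\} \cap Q(x_0, m+2)$, a bounded region on which $h$ is positive and harmonic. For the first, which is the main obstacle, I would repeat the submartingale / reflection argument from Lemma \ref{L:ubh-ext}, applied to $h$ in place of $g$. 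Since $h$ is harmonic on $D \setminus \sK$ and $h \ge 0$ vanishes on $\sK$, the process $h(S_{k \wedge \tau_D})$ is a nonnegative submartingale. Choosing cubes $Q^{(1)}, Q^{(2)}$ that are translates of $Q_{2N}$ meeting along a common face in a hyperplane $\bH_{n + c_0 m}$ (for a suitable constant $c_0$, with $\sK \subset Q^{(1)}$ and $Q^{(2)} \cap \sK = \emptyset$), the submartingale inequality on $Q^{(1)}$ and the martingale equality on $Q^{(2)}$ at a base point $z_* := x_0 + 2 c_0 m e_1 \in Q^{(2)}$, combined with the reflection symmetry across $\bH$ and the Harnack principle (exactly as in the preceding proof), give $h(y) \le C h(z_*)$ for $y \in \pd Q(x_0, 2 c_0 m)$. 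One further application of the submartingale inequality extends this bound to $y \in Q(x_0, 2 c_0 m)$ --- in particular to $y \in \pd Q$ --- and a short Harnack chain in $\{w_1 > n\}$ gives $h(z_*) \asymp h(z_R)$.

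Combining $(\ast)$ and $(\ast\ast)$,
\begin{equation*}
\bP^{x_0}(A \cap B) \;\ge\; c\, h(z_R)\, \bP^{x_0}(A,\, T^+_\sK > \tau) \;\ge\; c'\, h(z_R)\, \bP^{x_0}(T^+_\sK > \tau),
\end{equation*}
while $\bP^{x_0}(B) \le C\, h(z_R)\, \bP^{x_0}(T^+_\sK > \tau)$, so $\bP^{x_0}(A \cap B) \ge c''\, \bP^{x_0}(B)$, as claimed. The principal obstacle is $(\ast\ast)$: one has to verify that the reflection argument of Lemma \ref{L:ubh-ext} goes through for $h$, and to place the auxiliary cubes $Q^{(1)}, Q^{(2)}$ so that $\sK \subset Q^{(1)} \subset D$ and $Q^{(2)} \cap \sK = \emptyset$ under the standing hypotheses $Q_{4N} \subset D$, $n \le N$, $m \le N/2$ (or, where $Q^{(2)}$ extends beyond $D$, adapting the argument to stop at $\tau_{Q^{(2)}} \wedge \tau_D$).
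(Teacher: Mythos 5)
Your proof is correct in outline, but it reorganizes the argument differently from the paper, so it is worth comparing the two. The paper never proves your two-sided comparison $(\ast\ast)$ for $h(\cdot)=\bP^{\cdot}(T_\sK>\tau_D)$ directly. Instead it splits $\sK$ into a near part $\sK_0=\sK\cap Q(x_0,2m)$ and a far part $\sK_1$, observes that the $h$-transform by $\bP^{\cdot}(T_{\sK_1}>\tau_D)$ is harmless inside $Q(x_0,2m)$ (interior Harnack, since $\sK_1$ is at distance $\ge m$ from there), so that Lemma \ref{L:ubh} transfers to the $\sK_1$-conditioned walk; it then invokes Lemma \ref{L:ubh-ext} exactly as stated, at the single point $z_0=x_0+4me_1$, to trade $T_{\sK_1}$ for $T_\sK$, and finishes with Harnack chains in $\{w_1>n\}$. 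Your route proves the stronger, cleaner statement $\max_{\pd Q(x_0,m)\setminus\sK}h\le C\min_{\cR_{n,m}}h$ (a Carleson/boundary-Harnack estimate for $h$ itself) and then concludes by one application of the strong Markov property; the price is that you cannot cite Lemma \ref{L:ubh-ext} as a black box and must re-run its reflection argument for $h$ in place of $g$. That adaptation does work: the submartingale/reflection part of that proof uses only that the function is bounded by $1$, subharmonic, vanishes on a set contained in $\{w_1\le n\}$, and is harmonic off it, all of which hold for $h$; and you correctly replace the crossing/renewal part of that proof (which genuinely needs the function to vanish on the whole box $Q(x_0,m/2)$ and would \emph{not} carry over to $h$) by an optional-stopping step valid for any bounded subharmonic function. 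The residual issues you flag --- extending Lemma \ref{L:ubh} to a $\sK$ that may touch $\pd_i Q(x_0,m)$, the lower bound on the function along $\pd Q^{(2)}$, and the cube $Q^{(2)}$ possibly leaving $D$ --- are present to exactly the same degree in the paper's own proofs of Lemmas \ref{L:ubh} and \ref{L:ubh-ext}, so they do not constitute a gap specific to your argument. In short: the paper's proof is shorter because it reuses Lemma \ref{L:ubh-ext} verbatim; yours isolates a reusable boundary-Harnack statement at the cost of reworking that lemma's proof for a second function.
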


\begin{proof}
Let $\sK_0 = \sK \cap Q(x_0, 2 m)$ and 
$\sK_1 = \sK \setminus \sK_0 = \sK \setminus Q(x_0, 2 m)$.
Due to the boundary Harnack inequality, Lemma \ref{L:ubh}, we have
\eqn{e:simple-ubh}
{ \bP^{x_0} \big( S(\tau_{Q(x_0,m)} \in \cR_{n,m} \,\big|\, T^+_\sK > \tau_{Q(x_0,m)} \big)
  \ge (2d)^{-1}. }
Let $Z$ denote the process that is $S$ conditioned on 
$T_{\sK_1} > \tau_D$. Then \eqref{e:simple-ubh} and an application of the
Harnack principle implies that 
\eqn{e:cond-simple-ubh}
{ \bP^{x_0} \big( Z(\tau_{Q(x_0,m)} \in \cR_{n,m} \,\big|\, T^+_\sK[Z] > \tau_{Q(x_0,m)}[Z] \big)
  \ge c. }
This in turn implies that 
\eqnspl{e:cond-ubh}
{ &\bP^{x_0} \big( S(\tau_{Q(x_0,m)}) \in \cR_{n,m},\, T^+_\sK > \tau_{Q(x_0,m)},\, T_{\sK_1} > \tau_D \big) \\
  &\qquad \ge c \bP^{x_0} \big( T^+_\sK > \tau_{Q(x_0,m)},\, T_{\sK_1} > \tau_D \big) \\
  &\qquad \ge c \bP^{x_0} \big( T^+_\sK > \tau_D \big). }
Let $z_0 = x_0 + 4 m e_1$. Using the Harnack principle, the left hand side 
of \eqref{e:cond-ubh} can be bounded from above by
\eqnspl{e:away}
{ &\bP^{x_0} \big( S(\tau_{Q(x_0,m)}) \in \cR_{n,m},\, T^+_{\sK} > \tau_{Q(x_0,m)} \big) \, 
    \max_{z \in \cR_{n,m}} \bP^z \big( T_{\sK_1} > \tau_D \big) \\
  &\qquad \le C\, \bP^{x_0} \big( S(\tau_{Q(x_0,m)}) \in \cR_{n,m},\, T^+_{\sK} > \tau_{Q(x_0,m)} \big) \, 
    \bP^{z_0} \big( T_{\sK_1} > \tau_D \big). }
An application of Lemma \ref{L:ubh-ext} (with $2 m$ playing the role of $m/2$)
shows that 
\eqnst
{ \bP^{z_0} \big( T_{\sK_1} > \tau_D \big) 
  \le C \, \bP^{z_0} \big( T_{\sK_1 \cup Q(x_0, 2 m)} > \tau_D \big)
  \le C \, \bP^{z_0} \big( T_{\sK} > \tau_D \big). }
Substituting this into \eqref{e:away}, and using the Harnack principle again,
we get that the right hand side of \eqref{e:away} is bounded above by
\eqnspl{e:bound-away}
{ &C\, \bP^{x_0} \big( S(\tau_{Q(x_0,m)}) \in \cR_{n,m},\, T^+_{\sK} > \tau_{Q(x_0,m)} \big) \, 
    \bP^{z_0} \big( T_{\sK} > \tau_D \big) \\
  &\qquad \le C \, \bP^{x_0} \big( S(\tau_{Q(x_0,m)}) \in \cR_{n,m},\, T^+_{\sK} > \tau_{Q(x_0,m)} \big) \, 
    \min_{z \in \cR_{n,m}} \bP^z \big( T_{\sK} > \tau_D \big) \\ 
  &\qquad \le C \, \bP^{x_0} \big( S(\tau_{Q(x_0,m)}) \in \cR_{n,m},\, T^+_{\sK} > \tau_D \big). }
The inequalities \eqref{e:cond-ubh}, \eqref{e:away} and \eqref{e:bound-away}
together imply the claim of the Lemma.
\end{proof}

\sms
We now return to the task of giving a lower bound for $\bE ( H_A(\beta) )$.
We will need the following lower bound on $\wt G$.

\begin{lemma} \label{L:wtG}
Assume $d \ge 3$. Let $z \in A$. Then
$$ \wt G_D(x_0, z) \ge c m^{2-d}. $$
\end{lemma}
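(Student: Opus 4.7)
The plan is to bound $\wt G_D(x_0,z)$ by decomposing according to the first entry of $\wt X$ into $A^*$, and then using \eqref{e:hGn} to reduce to a standard domain Green-function estimate inside $A^*$. Since $z\in A\subset A^{*o}$ while $x_0\notin A^*$, the first visit of $\wt X$ to $z$ cannot precede the first hit $T^*=T_{A^*}[\wt X]$. Applying the strong Markov property to $\wt X$ at $T^*$,
$$ \wt G_D(x_0,z) = \bE^{x_0}\big[\wt G_D(\wt X_{T^*},z)\,I(T^*<\wt\tau_D)\big]
   \ge \bP^{x_0}(T^*<\wt\tau_D)\cdot\min_{w\in\pd_i A^*}\wt G_D(w,z). $$

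For the Green-function factor, \eqref{e:hGn} gives $\wt G_D(w,z)\asymp G_D(w,z)$ for $w\in\pd_i A^*$. Since $Q(x_1,m/2)\subset Q_{n+m}\subset Q_N\subset D$, and since $w\in\pd_i Q(x_1,3m/8)$ and $z\in Q(x_1,m/4)$ satisfy $|w-z|\asymp m$ with each at distance $\ge m/8$ from $\pd Q(x_1,m/2)$, the usual lower bound for a cube Green function yields
$$ G_D(w,z) \ge G_{Q(x_1,m/2)}(w,z) \ge c\,m^{2-d}. $$
Hence $\min_{w\in\pd_i A^*}\wt G_D(w,z)\ge c\,m^{2-d}$.

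For the hitting factor, the plan is to apply Lemma \ref{L:ubh-tauD} with $\sK=\al$ and with the scale $m/4$ in place of $m$, obtaining
$$ \bP^{x_0}\big(S(\tau_{Q(x_0,m/4)})\in \cR_{n,m/4}\,\big|\,T^+_\al>\tau_D\big) \ge c. $$
A coordinate check shows $\cR_{n,m/4}\subset A^*$: relative to $x_1$, the first coordinate of a point in $\cR_{n,m/4}$ is $-m/4$ and the others lie in $[-m/4,m/4]$, all within $[-3m/8,3m/8]$. Since also $Q(x_0,m/4)\subset D$, the exit event is contained in $\{T_{A^*}[S]<\tau_D\}$, and under the $h$-transform this translates into $\bP^{x_0}(T^*<\wt\tau_D)\ge c$. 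Combining with the Green-function bound yields $\wt G_D(x_0,z)\ge c\,m^{2-d}$.

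The main obstacle is the hitting probability estimate, because applying Lemma \ref{L:ubh-tauD} at scale $m/4$ requires $m\ge 32$ and needs $D$ to be sufficiently thick around $Q(x_0,m/4)$ for the lemma's hypotheses to be met. When the geometric preconditions fall short, one instead first steps $\wt X$ to the neighbor $x_0+e_1\in D\setminus\al$ and controls the ratio $h(x_1)/h(x_0)$ using Lemma \ref{L:ubh} and a boundary Harnack comparison, concluding directly that the conditioned walk reaches $A^*$ with positive probability.
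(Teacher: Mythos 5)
Your proof is correct and follows essentially the same route as the paper's: decompose $\wt G_D(x_0,z)$ at the first entry of the conditioned walk into $A^*$ (the paper stops instead at the exit of $Q(x_0,m/8)$, which lands in $A^*$), bound the hitting probability below by a constant via Lemma \ref{L:ubh-tauD}, and bound $\min_{w\in\pd_i A^*}\wt G_D(w,z)$ below by $c\,m^{2-d}$ via \eqref{e:hGn} and standard Green function asymptotics. The differences (scale $m/4$ versus $m/8$, and your explicit monotonicity step $G_D\ge G_{Q(x_1,m/2)}$) are cosmetic.
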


\proof 
This uses the extension of the boundary Harnack inequality, Lemma \ref{L:ubh-tauD}.
Let $V_z$ be the number of hits on $z$ by $\wt X$ before $\wt \tau_D$. 
Let $\wt T = \wt T_{\pd_i Q(x_0, m/8)}$. Note that $Q(x_0, m/8)$ and
$A^*$ intersect on one of the faces of $Q(x_0, m/8)$. Then
since $\wt T < \wt \tau_D$,
\begin{align*}
\wt G_D(x_0, z) &= \bE^{x_0} V_z 
 = \bE^{x_0}\Big( \bE^{\wt X_{\wt T}} V_z \Big) 
\ge \bE^{x_0}\Big( 1_{ (\wt X_{\wt T} \in A^*)} 
   \min_{y \in \pd_i A^*} \bE^y V_z\Big)\\
&=  \bP^{x_0}( \wt X_{\wt T} \in A^*) 
 \min_{y \in \pd_i A^*} \wt G_D(y,z).
\end{align*}
Using \eqref{e:wtG} and \eqref{e:hiA} we have 
$\wt G_D(y,z) \asymp G_D(y,z) \asymp m^{2-d}$ if $y \in \pd_i A^*$.
Let $T=  T_{\pd_i Q(x_o, m/8)}$ (for $S$). Lemma \ref{L:ubh-tauD}
implies
\begin{align*}
 \bP^{x_0}(\wt X_{\wt T} \in A^*)
 = \bP^{x_0}(S_T \in A^*| T^+_\al > \tau_D ) \ge c,
\end{align*}
and the Lemma follows.
\qed 

\ms
The key estimate is the following.

\begin{lemma} \label{L:ptlb}
Assume $d \ge 5$. Then 
\be
 \bE( H_A(\beta) | \al) \ge c m^2. 
\ee
\end{lemma}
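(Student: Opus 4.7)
The plan is, for each $z \in A$, to construct an event $E_z \subset \{z \in \beta\}$ with conditional probability at least $c\, m^{2-d}$; summing over $z \in A$ and using $|A| \asymp m^d$ then yields the stated lower bound $c\, m^2$.

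Because chronological loop erasure retains every visited vertex of $\wt X$, placed in order of the last-visit times $\lambda(v) := \max\{k < \wt\tau_D : \wt X_k = v\}$, a sufficient condition for $z \in \beta$ is $\lambda(z) < \tau^*$, where $\tau^* := \wt T_{\pd_i Q(x_0, m)}$: on this event the prefix $\wt X[0, \lambda(z)]$ stays in $Q(x_0, m)^o$, and so does its loop erasure, which coincides with the portion of $\sL(\wt X)$ up to and including $z$. I take the stronger event
\[
  E_z := \{\wt T_z < \tau^*\} \cap \{\wt X \text{ does not return to } z \text{ before } \wt\tau_D\};
\]
on $E_z$, $\wt T_z$ is the unique visit to $z$, so $\lambda(z) = \wt T_z < \tau^*$ and $z \in \beta$. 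Applying the strong Markov property at $\wt T_z$,
\[
  \bP(E_z \mid \al) = \bP^{x_0}(\wt T_z < \tau^*) \cdot \bP^z(T^+_z > \wt\tau_D).
\]
The second factor equals $1/\wt G_D(z, z)$, bounded below by a positive constant since \eqref{e:wtG} together with $G(z, z) = O(1)$ yield $\wt G_D(z, z) = O(1)$ uniformly over $z \in A$ (which lies at distance $\asymp m$ from $\al$).

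The first factor equals $\wt G_{Q^o}(x_0, z) / \wt G_{Q^o}(z, z)$ with $Q^o := Q(x_0, m)^o$. Since $\wt G_{Q^o}(z, z) \le \wt G_D(z, z) = O(1)$, the heart of the matter is the Green function bound $\wt G_{Q^o}(x_0, z) \ge c\, m^{2-d}$. I will imitate the proof of Lemma~\ref{L:wtG}: the containment $Q(x_0, m/8) \subset Q^o$ forces $\wt T := \wt T_{\pd_i Q(x_0, m/8)} < \tau^*$ almost surely, and strong Markov combined with Lemma~\ref{L:ubh-tauD} (applied with $m/8$ in place of $m$) gives
\[
  \wt G_{Q^o}(x_0, z) \ge \bP^{x_0}(\wt X_{\wt T} \in A^*) \, \min_{y \in A^* \cap \pd_i Q(x_0, m/8)} \wt G_{Q^o}(y, z) \ge c \min_y \wt G_{Q^o}(y, z).
\]
For any such $y$ and any $z \in A$, both points lie in $A^* \setminus \al$ at distance $\asymp m$ apart, so the $h$-transform analogue of \eqref{e:wtG} for $\wt G_{Q^o}$, Harnack on $A^*$ (giving $h(y) \asymp h(z)$), and the standard interior Green function estimate $G_{Q^o}(y, z) \asymp m^{2-d}$ together give $\wt G_{Q^o}(y, z) \ge c\, m^{2-d}$, completing the argument.

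The main technical obstacle is ensuring that removing $\al$ from the domain does not destroy the Green function estimate, i.e.\ that $G_{Q^o \setminus \al}(y, z) \asymp G_{Q^o}(y, z)$ for $y, z \in A^*$. This amounts to showing that simple random walk from $y \in A^*$ hits $\al$ before exiting $Q^o$ with probability bounded away from $1$, which can be established by a Harnack-type comparison with a point on the right-hand face of $Q(x_0, m)$ (from which $\al$ is essentially unreachable), in the spirit of Lemma~\ref{L:ubh-ext}.
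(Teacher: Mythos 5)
The proposal rests on a false premise and the resulting sufficient condition for $\{z \in \beta\}$ is wrong, so the argument has a fatal gap exactly at the point where the real difficulty of the lemma lies.

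First, chronological loop erasure does \emph{not} retain every visited vertex: for the path $(0,1,2,1,3)$ the loop erasure is $(0,1,3)$ and the visited vertex $2$ is gone. More importantly, your event $E_z = \{\wt T_z < \tau^*\} \cap \{\text{no return to } z\}$ is \emph{not} contained in $\{z \in \beta\}$. The correct characterization is: if $\sigma$ is the time of the \emph{last} visit of $\wt X$ to $z$, then $z$ survives in $\sL(\wt X)$ if and only if the remainder of the path $\wt X[\sigma+1,\wt\tau_D]$ is disjoint from $\sL(\wt X[0,\sigma])$. A single visit to $z$ does not protect $z$: if the walk goes $x_0 \to a \to z \to b \to a \to \cdots$ and never returns to $z$, the return to $a$ erases $z$ anyway, because $a$ precedes $z$ on the loop erasure of the initial segment. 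For the same reason your claim that on $E_z$ ``the loop erasure of the prefix coincides with the portion of $\sL(\wt X)$ up to and including $z$'' fails: later portions of the walk can erase earlier portions of the already-formed loop-erased path.

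This missing non-intersection requirement is precisely the hard part of the lemma. The paper handles it via the identity \eqref{e:3pg} (from Lemma 6.1 of \cite{BM1}), which factors $\bP(z\in\beta\mid\al)$ as $\wt G_D(x_0,z)$ times the probability that the loop erasure of the (conditioned, reversed) path into $z$ avoids an independent continuation $X'$ out of $z$ while staying in $Q(x_0,m)$; bounding that probability below requires the separation-type argument of Claims 1--4, which in turn uses $d\ge 5$ so that two independent walks fail to intersect with positive probability. Your Green-function work (the bound $\wt G_D(x_0,z)\ge c\,m^{2-d}$ via Lemma \ref{L:ubh-tauD}, essentially reproducing Lemma \ref{L:wtG}) supplies only the first factor; the second factor --- and with it the only place the hypothesis $d \ge 5$ is actually used --- is absent from your proof. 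Note also that your claimed identity $\bP(E_z\mid\al)=\bP^{x_0}(\wt T_z<\tau^*)\cdot\bP^{z}(T^+_z>\wt\tau_D)$ would, even if $E_z$ were the right event, require $\wt X$ to be Markov with a strong Markov property at $\wt T_z$ under the conditioning defining $\wt X$; this is true ($\wt X$ is an $h$-transform), but it does not rescue the set inclusion, which is where the proof breaks.
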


\proof 
It is enough to prove that if  $z \in A$ then
\be \label{e:zhit}
\bP( z \in \beta | \al ) \ge c m^{2-d}. 
\ee
Let $Y$ be $\wt X$ conditioned to hit $z$ before 
$\wt T^+_\al \wedge \wt \tau_D$, and let $\wt X^{z}$ be independent 
of $Y$. Let 
$$ Y'= \sE^L_{z}( \sE^F_{\pd D}( Y)), $$
so $Y'$ is the path of $Y$ up to its last hit on $z$ before its
first exit from $D$. Let also $X'= \Th_1 \sE^F_{\pd D} \wt X^z$. 
(We need to apply $\Th_1$ since the last point of $Y'$ and
the first point of $X'$ are both $z$.) 
Then as in Lemma 6.1 of \cite{BM1} we have
\be \label{e:3pg}
  \bP( z \in \beta | \al ) = \wt G_D(x_0,z)
  \bP\big( \sL Y' \cap X' =\emptyset, \sL Y' \subset Q(x_0,m) \big). 
\ee
Due to Lemma \ref{L:wtG}, it remains to show that the probability
on the right hand side is bounded away from $0$.
We will in fact prove the stronger statement:
\eqn{e:avoid}
{ \bP\big( Y' \cap X' =\emptyset, Y' \subset Q(x_0,m) \big)
  \ge c > 0. } 
This result is not surprising, since two independent SRW in $\bZ^d$
(with $d\ge 5$) intersect with probability strictly less than 1.

Let us denote $A_z = Q(z,m/16)$, $B = Q(x_0,m)$ and $B' = Q(x_0,m/16)$.
Note that $Y'$ starts at $x_0$ and ends at $z$. 
We decompose $Y'$ into four subpaths, defined below, and give separate
estimates for these subpaths that together will imply the lower bound
on the probability in \eqref{e:avoid}. We define:
\eqnsplst
{ Y'_1
  = \sE^F_{\pd B'} (Y') \qquad
  Y'_2
  = \sE^L_{\pd A_z} ( \sB^F_{\pd B'} (Y') ) \qquad
  Y'_3
  = \sB^L_{\pd A_z} (Y'). }
That is, $Y'_1$ ends at the first exit from $B'$, 
$Y'_3$ begins at the last entrance to $A_z$ and $Y'_2$ is
the portion in between. We let $y_1 = Y'_1(|Y'_1|) = Y'_2(0)$ and
$y_2 = Y'_2(|Y'_2|) = Y'_3(0)$. 
We further decompose $Y'_2$ into the pieces:
\eqnsplst
{ Y'_{2,1}
  = \sE^F_{y_2} (Y'_2) \qquad
  Y'_{2,2}
  = \sB^F_{y_2} (Y'_2). }
That is, $Y'_{2,1}$ is the piece from $y_1$ to the first hit on $y_2$,
and $Y'_{2,2}$ is the remaining loop at $y_2$.
Observe that conditional on $y_1$ and $y_2$, the paths
$Y'_1, Y'_{2,1}, Y'_{2,2}, Y'_3$ are independent.
We now state our estimates for each piece.
Our notation will assume that $x_0 \in \bH_n$; trivial modification 
can be made when this is not the case.

\ms

\emph{Claim 1.} 
There is constant probability that $Y'_1$ exits 
$B'$ on the right hand face. That is,
we have $\bP ( y_1 \in \cR_{n,m/16} ) \ge c > 0$,
where $\cR_{n,m/16} = \bH_{n+m/16} \cap Q(x_0,m/16)$.

\emph{Proof of Claim 1.} 
Using Lemma \ref{L:ubh-tauD} we have
\eqnsplst
{ \bP ( y_1 \in \cR_{n,m/16} )
  &= \frac{\bP^{x_0} ( \wt X(\wt \tau_{B'}) \in \cR_{n,m/16},\, \wt T_{z} < \wt \tau_D )}{\bP^{x_0} ( \wt T_{z} < \wt \tau_D )} \\
  &\ge \frac{\wt G_D(z,z)}{\wt G_D(x_0,z)} \, \bP^{x_0} ( \wt X(\wt \tau_{B'}) \in \cR_{n,m/16} ) \, 
    \min_{w \in \cR_{n,m/16}} \bP^{w} ( \wt T_{z} < \tau_D ) \\
  &\ge c \min_{w \in \cR_{n,m/16}} \frac{\wt G_D(w,z)}{\wt G_D(x_0,z)} 
  \ge c. }

\ms 

In the next three claims we will use the notation
$B'' = x_0 + ([0, z_1 + m/32] \times [-m,m]^{d-1}) \cap \bZ^d$.

\ms

\emph{Claim 2.} 
There is constant probability that the following six events occur:\\
(i) $Y'_3$ starts on the left hand face of $A_z$;\\
(ii) $Y'_3 \subset z + ([-m/16,m/32] \times [-m/16,m/16]^{d-1}) \cap \bZ^d$;\\
(iii) $X'$ exits $A_z$ on the right hand face;\\
(iv) $X' \cap A_z \subset z + ([-m/32,m/16] \times [-m/16,m/16]^{d-1}) \cap \bZ^d$;\\
(v) $Y'_3 \cap (X' \cap A_z) = \es$;\\
(vi) $\sB^F_{\pd A_z} (X')$ is disjoint from $B''$.

\emph{Proof of Claim 2.}
Let $\wt S^{z}$ be the process defined as $S^{z}$ conditioned to
hit on $x_0$ before $T_{\al \setminus \{x_0\}} \wedge \tau_D$.
The time-reversal of $Y'$ has the law of $\wt S^{z}$. 
Therefore, the time-reversal of $Y'_3$ has the law of $\sE^F_{\pd A_z} (\wt S^{z})$.
The proof of Lemma \ref{L:sep} (Separation Lemma), shows that 
for independent simple random walks $S^{z}$ and $S'^{z}$ there is
probability $\ge c > 0$ that the analogues of the events 
(i)--(v) all hold. An application of the 
Harnack principle then shows that in fact (i)--(v) hold
with constant probability. 

It is left to show that conditionally on (i)--(v), we also 
have (vi) with constant probability. Since $X'$ is
$S$ conditioned on $T_\al > \tau_D$, this can be proved 
in the same way as Lemma \ref{L:ubh-ext}. For this 
we merely have to replace $Q(x_0,m/2)$ in that lemma
by $B''$, and make straightforward adjustments. Hence Claim 2 follows.

\ms 

\emph{Claim 3.}
Conditional on $y_1$ being in the right hand face of $B'$ and 
$y_2$ being in the left hand face of $A_z$, there is constant probability that 
$Y'_{2,1} \subset B''$.

\emph{Proof of Claim 3.}
Condition on $y_1$ and $y_2$. Then $Y'_{2,1}$ has the
law of $S^{y_1}$ conditioned to hit on $y_2$ before 
$T_\al \wedge \tau_D$ (stopped at the first hit on $y_2$).
Since $y_1$ and $y_2$ are at least distance $c m$ from 
the boundary of $B''$, such a path has constant 
probability to stay inside $B''$. (One way to see this
is to use an argument similar to that of Lemma \ref{L:ubh-ext},
where we let $R$ count the number of crossings by the walk 
from $Q(z,m/64)$ to $\pd B''$ before time 
$T_z \wedge T_\al \wedge \tau_D$.) Hence the claim
follows.

\ms

\emph{Claim 4.}
Conditional on $y_2$ being in the left hand face of $A_z$,
there is constant probability that $Y'_{2,2} \subset B''$.

\emph{Proof of Claim 4.}
Condition on $y_2$. The probability that $Y'_{2,2}$ consists 
of a single point is 
$G_{D \setminus \al} (y_2,y_2)^{-1} \ge G(y_2,y_2)^{-1} \ge c > 0$.

\ms 

When all the events in Claims 1--4 occur, the event in \eqref{e:avoid}
occurs. Hence the Lemma follows.

\ms
An application of Lemmas \ref{L:Mub} and \ref{L:ptlb} and the
one-sided Chebyshev inequality give the following corollary.

\begin{cor} \label{C:HAlb}
When $d \ge 5$, there exists a constant $c_0>0$ such that
$$ \bP( H_A(\beta) \ge c_0 m^2| \al ) \ge c_0. $$
\end{cor}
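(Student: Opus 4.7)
The corollary is the clean consequence of combining the first-moment lower bound from Lemma \ref{L:ptlb} with the second-moment upper bound from Lemma \ref{L:Mub}, so the plan is essentially mechanical; the only choice is which form of the ``small-ball from moments'' inequality to apply.

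Write $Z = H_A(\beta)$ and work throughout under the conditional law $\bP(\,\cdot\mid\alpha)$, treating $\alpha$ as fixed. From Lemma \ref{L:ptlb} we have $\bE(Z\mid\alpha)\ge c_1 m^2$, and from Lemma \ref{L:Mub} we have $\bE(Z^2\mid\alpha)\le c_2 m^4$. Both constants are uniform in $\alpha$ and in $m$ (for $m$ in the allowed range).

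The cleanest route is the Paley--Zygmund inequality: for any non-negative $Z$ with finite second moment and any $\theta\in[0,1]$,
\[
  \bP(Z\ge \theta\,\bE Z\mid\alpha) \;\ge\; (1-\theta)^2\,\frac{(\bE(Z\mid\alpha))^2}{\bE(Z^2\mid\alpha)}.
\]
I would take $\theta=\tfrac12$. The right-hand side is then
\[
  \tfrac14 \cdot \frac{(\bE(Z\mid\alpha))^2}{\bE(Z^2\mid\alpha)} \;\ge\; \tfrac14 \cdot \frac{c_1^2 m^4}{c_2 m^4} \;=\; \frac{c_1^2}{4 c_2},
\]
while $\theta\,\bE(Z\mid\alpha)\ge (c_1/2)\,m^2$. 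Setting $c_0 = \min\{c_1/2,\; c_1^2/(4c_2)\}$ gives
\[
  \bP(H_A(\beta)\ge c_0 m^2\mid\alpha)\;\ge\; c_0,
\]
which is the claimed inequality. (Equivalently, one could apply Cantelli's one-sided inequality to $Z$, using $\mathrm{Var}(Z\mid\alpha)\le \bE(Z^2\mid\alpha)\le c_2 m^4$ and $\bE(Z\mid\alpha)\ge c_1 m^2$, and choose the deviation parameter so that $\bE(Z\mid\alpha) - k\sigma \ge (c_1/2)m^2$; this yields the same conclusion.)

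There is no real obstacle here: all the work has already been done in Lemmas \ref{L:Mub} and \ref{L:ptlb}. The only thing to be mildly careful about is that the constants $c_1,c_2$ in those two lemmas are independent of the conditioning path $\alpha$, so that the resulting $c_0$ is a genuine constant depending only on $d$, as required.
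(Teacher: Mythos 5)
Your proof is correct and follows essentially the same route as the paper, which likewise combines Lemma \ref{L:ptlb} and Lemma \ref{L:Mub} via a second-moment argument (the paper cites the one-sided Chebyshev inequality, which you note as an equivalent alternative to Paley--Zygmund). The constants are indeed uniform in $\alpha$ and $m$, so your $c_0$ depends only on $d$ as required.
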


\begin{prop} 
\label{P:len-lb}
Assume $d \ge 5$. Let $N \ge 1$ and $Q_{4N} \subset D \subset \bZ^d$. 
Let $L = \sL \sE^F_{\pd D} S$ be a loop erased walk
from $0$ to $\pd D$, and $M_N = |\sE^F_{\pd_i Q_N} L|$
be the number  of steps in $L$ until its first hit on $\pd_i Q_N$.
Then for all $\lambda > 0$ we have
\be \label{e:Llb}
 \bP( M_N < \lam N^2) \le C \exp( -c \lam^{-1} ). 
\ee
\end{prop}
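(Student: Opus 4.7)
The plan is to decompose $L$ into contributions from $K \asymp 1/\lambda$ disjoint radial shells of common thickness $m$, apply Corollary~\ref{C:HAlb} in each, and then conclude by a Chernoff bound. The single-shell estimate provides only a constant probability of a length gain of order $m^2$, so to accumulate a total of $\lambda N^2$ we need many shells to succeed simultaneously; it is concentration over these $\asymp 1/\lambda$ trials that yields the stretched exponential $\exp(-c/\lambda)$.

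Concretely, fix $c_3 > 0$ depending on $c_0$ (to be chosen), and set $m = \lceil c_3 \lambda N \rceil$. Define shell radii $n_k = 8m + 4km$ for $k = 0, 1, \dots, K-1$, with $K = \lfloor (N - 9m)/(4m) \rfloor$, so that $K \asymp 1/\lambda$. The offset $8m$ ensures the shell-setup constraint $m \le n_k/8$ used by Lemma~\ref{L:Mub} and Lemma~\ref{L:ptlb}, and the spacing $4m > 2m$ between $n_k$ and $n_{k+1}$ ensures that the boxes $Q(x_0^{(k)}, m)$ at consecutive shells are disjoint, hence the small subboxes $A_k$ are pairwise disjoint subsets of $Q_N$.

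Let $\alpha_k = \sE^F_{\pd_i Q_{n_k}} L$, let $x_0^{(k)}$ be its endpoint, and let $\beta_k, A_k$ be the corresponding quantities from the shell setup preceding Lemma~\ref{L:Mub}. Put $\mathcal{F}_k = \sigma(\alpha_k)$ and $X_k = \mathbf{1}\{H_{A_k}(\beta_k) \ge c_0 m^2\}$. Corollary~\ref{C:HAlb} gives $\bP(X_k = 1 \mid \mathcal{F}_k) \ge c_0$, so $(X_k)_{k \le K-1}$ stochastically dominates an i.i.d.\ $\mathrm{Bernoulli}(c_0)$ sequence by a standard coupling argument. Since each $\beta_k$ lies inside $Q(x_0^{(k)}, m) \subset Q_N$ and the $A_k$ are pairwise disjoint,
\begin{equation*}
M_N \ge \sum_{k=0}^{K-1} H_{A_k}(\beta_k) \ge c_0 m^2 \sum_{k=0}^{K-1} X_k.
\end{equation*}
Choosing $c_3$ so that $c_0 m^2 \cdot (c_0 K/2) \ge \lambda N^2$, the event $\{M_N < \lambda N^2\}$ is contained in $\{\sum_k X_k < c_0 K/2\}$, whose probability is at most $\exp(-c K) \le \exp(-c'/\lambda)$ by Chernoff applied to the dominating Bernoulli sum.

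The only real subtleties are the boundary regimes in $\lambda$. When $\lambda$ exceeds a fixed constant, $K$ is too small for concentration, but $C \exp(-c/\lambda)$ is then bounded below by a positive constant and the estimate is trivial for $C$ large. When $\lambda$ is so small that $c_3 \lambda N$ would fall below the threshold required by the shell lemmas (say $m \ge 16$), the deterministic bound $M_N \ge N-1$ combined with $\lambda N^2 \le N - 1$ already settles the case, or one simply replaces $m$ by that threshold, which only strengthens the final bound.
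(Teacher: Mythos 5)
Your proposal is correct and is essentially the paper's own argument: the same shell decomposition with $m \asymp \lambda N$ and $k \asymp \lambda^{-1}$ shells, the same sequential conditioning on $\sigma(\alpha_j)$ to apply Corollary \ref{C:HAlb}, stochastic domination by i.i.d.\ Bernoulli variables, and a Chernoff bound. The only differences are cosmetic --- you space the shells so the boxes are disjoint and bound $M_N$ by $\sum_k H_{A_k}(\beta_k)$ rather than by $\sum_j |\beta_j|$ for consecutive shells, and you spell out the degenerate regimes of $\lambda$, which the paper leaves implicit.
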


\proof 
Suppose $k \ge 1$ and $m \ge 4$ such that $N/2 \le k m < N-m$.
For $j=1, \dots, k$ let 
$$ \al_j = \sE^F_{\pd_i Q(0, jm) } L, \q \sF_j = \sigma( \al_j). $$
Let $Y_j = \al_j ( |\al_j|)$ be the last point in $\al_j$, and
$$ \beta_j = \sE^F_{\pd_i Q(Y_j,m)}( \sB^F_{\pd_i Q(0, jm) }L) $$
be the path $L$ between $Y_j$ and its first hit after $Y_j$ on
$\pd_i( Y_j, m)$. We have
$$ M_N \ge \sum_{i=1}^k |\beta_j|, $$
Let $G_j = \{ |\beta_j| < c_0 m^2 \}$; then by Corollary \ref{C:HAlb}
$$ \bP( G_j  | \sF_j) \le 1 - c_0. $$
Therefore, $M_N$ stochastically dominates a sum of $k$
independent random variables that take the values $c_0 m^2$ and $0$
with probabilities $c_0$ and $1 - c_0$, respectively. Hence
\begin{align*}
\bP( M_N \le (1/2) k c_0^2 m^2 ) &\le C \exp ( - c k ).
\end{align*}
We now take $k \asymp \lambda^{-1}$ and $m \asymp \lambda N$ 
and we obtain \eqref{e:Llb}.
\qed

In the following theorem, we obtain a lower bound on the length
of paths in the USF. We define the event:
\be \label{e:Fndef}  
F(y,x,n) = \left\{ \text{$T_x[S^y] < \infty$ and $|\sL \sE^F_x (S^y)| \le n$} \right\}. 
\ee

\begin{thm} \label{T:Ass2}
For every $x, y \in \bZ^d$ we have
\be \label{e:clb}
 \bP(F(y,x,n))
  \le C (1 + |x-y|)^{2-d} \exp \left[ - c \frac{|x-y|^2} {n} \right]. 
\ee
\end{thm}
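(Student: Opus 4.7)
The plan is to apply the shell-by-shell length argument of Proposition \ref{P:len-lb} with $D = \bZ^d\setminus\{x\}$, work under the conditional measure $\bP(\,\cdot\mid T_x[S^y]<\infty)$, and multiply the resulting length estimate by the hitting probability.

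Write $r = |x-y|$. Several cases are trivial. If $n<\|x-y\|_1$ then $F(y,x,n)=\es$, since every self-avoiding path from $y$ to $x$ has length at least $\|x-y\|_1\ge r$. If $r$ is bounded by a fixed constant, or if $n\ge c_0 r^2$ (so that $\exp(-c r^2/n)$ is bounded below), the desired bound reduces to $\bP(F(y,x,n))\le\bP(T_x[S^y]<\infty) = G(y,x)/G(x,x)\asymp(1+r)^{2-d}$. Henceforth assume $r$ is large and $r\le n\le c_0 r^2$.

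Set $N=\lfloor r/(5\sqrt d)\rfloor$, so that $Q(y,4N)\subset D:=\bZ^d\setminus\{x\}$, and choose integers $k,m$ with $m\ge 4$, $km\le N$, $c_0^2 k m^2\asymp n$ and $k\asymp r^2/n$ (feasible on $r\le n\le c_0 r^2$: take $m=4$ and $k\asymp n$ when $n\in[r,Cr]$, and $m\asymp n/r$, $k\asymp r^2/n$ otherwise). Let $L=\sL(\sE^F_{\pd D} S^y)$, and define $\alpha_j=\sE^F_{\pd_i Q(y,jm)}L$, $Y_j=\alpha_j(|\alpha_j|)$ and $\beta_j=\sE^F_{\pd_i Q(Y_j,m)}(\sB^F_{\pd_i Q(y,jm)}L)$, exactly as in the proof of Proposition \ref{P:len-lb}. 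The crucial observation is that Lemmas \ref{L:dmp}, \ref{L:Mub}, \ref{L:ptlb}, \ref{L:ubh-tauD} and Corollary \ref{C:HAlb} are stated for an arbitrary $D\supset Q_{4N}$, and their proofs rely only on Green-function and Harnack estimates on the region $A^*$ which remain uniform for $D=\bZ^d\setminus\{x\}$ because $A^*$ lies at distance $\gtrsim m$ from $\alpha_j$ and at distance $\asymp r\gg m$ from $x=\pd D$. Consequently, under the conditional measure $\bP(\,\cdot\mid T_x<\infty)$, the domain Markov property and Corollary \ref{C:HAlb} give
\[
\bP\bigl(|\beta_j|\ge c_0 m^2\,\bigm|\,\sigma(\alpha_j),\ T_x<\infty\bigr)\ge c_0.
\]

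Since the $\beta_j$ are disjoint sub-paths of $L$, we have $|L|\ge\sum_{j=1}^k|\beta_j|$ on $\{T_x<\infty\}$, and a standard Chernoff estimate applied under the conditional measure yields $\bP\bigl(|L|\le\tfrac{1}{2}c_0^2 km^2\,\bigm|\,T_x<\infty\bigr)\le C\exp(-ck)$. With our choice of $k,m$, $\tfrac{1}{2}c_0^2 km^2\asymp n$ and $\exp(-ck)=\exp(-cr^2/n)$, so multiplying by $\bP(T_x<\infty)\le C(1+r)^{2-d}$ proves the theorem.

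The main point requiring care is verifying that the ingredients behind Corollary \ref{C:HAlb}---in particular the boundary-Harnack inequality of Lemma \ref{L:ubh-tauD} and the Green-function comparisons in Lemma \ref{L:Mub}---transfer to the unbounded domain $D=\bZ^d\setminus\{x\}$. This comes down to the separation $|z-x|\asymp r$ for $z$ in $A^*$: from it one obtains $G_D(z,w)=G(z,w)-G(z,x)G(x,w)/G(x,x)\asymp G(z,w)$ on $A^*$, and Harnack applied to the harmonic function $h(z)=\bP^z(T_x<T_{\alpha_j}^+)$ gives $h(z)\asymp h(w)$ uniformly for $z,w\in A^*$; with these in hand the shell estimates transfer essentially verbatim.
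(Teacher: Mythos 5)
Your proof is correct in substance, but it reaches the shell estimate by a genuinely different route than the paper. The paper decouples the conditioning first: writing $X'$ for $S^y$ conditioned on $\{T_x<\infty\}$, it observes that $h(z)=\bP^z(T_x<\infty)\asymp |x-y|^{2-d}$ uniformly on $Q(y,N)$ with $N\asymp|x-y|$, so $S^y$ and $X'$ --- and, by the explicit formula for the law of a LERW segment from Lawler/Masson, their loop erasures --- have comparable laws inside $Q(y,N)$; it then factors out $\bP(T_x<\infty)\le C(1+|x-y|)^{2-d}$ and applies Proposition \ref{P:len-lb} to the \emph{unconditioned} LERW as a black box. You instead keep the conditioning and rerun the shell argument under $\bP(\,\cdot\mid T_x<\infty)$, using that the domain Markov property (Lemma \ref{L:dmp}) with $D=\bZ^d\setminus\{x\}$ automatically produces the walk conditioned on $\{T_x<T^+_{\alpha_j}\}$, and that Lemmas \ref{L:Mub}, \ref{L:ubh-tauD}, \ref{L:ptlb} and Corollary \ref{C:HAlb} are stated for an arbitrary $D\supset Q_{4N}$; your verification that $G_D\asymp G$ and $h\asymp\mathrm{const}$ on $A^*$ is exactly what makes those lemmas applicable to this $D$. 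Both routes are valid; yours avoids the LERW law-comparison step at the cost of checking the conditional version of the shell estimate. One small repair to your parameter bookkeeping: in the regime $n\asymp|x-y|$ the choice $m=4$, $k\asymp n$ is inconsistent with $km\le N\asymp |x-y|$ together with the requirement that the Chernoff threshold $\tfrac12 c_0^2 km^2$ dominate $n$; you should instead take $m$ to be a sufficiently large constant (of order $1/c_0^2$) and $k\asymp |x-y|$. This is the same constant-absorption that is already implicit in the proof of Proposition \ref{P:len-lb}, so it is a cosmetic fix, not a gap.
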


\proof 
For notational convenience, we assume $y = 0$ (otherwise translate $x, y$ by $-y$).
If $|x|^2/n \le 1$ then the term in the exponential in \eqref{e:clb}
is of order 1, so
$$ \bP( F(0,x,n)) \le \bP( T_x < \infty) \le (1+|x|)^{2-d} 
\le e^c (1+|x|)^{2-d} e^{- c |x|^2 / n}.  $$

Now assume $|x|^2 > n$, and let $N= ||x||_\infty/4$, and  $Q=Q(0,N)$.
Let $X'$ be $S$ conditioned on $\{ T_x < \infty\}$. Then if
$h(z) = \bP^z( T_x[S] < \infty)$, we have $h(z) \asymp N^{2-d}$ on $Q(0,N)$, and
thus the processes $S$ and $X'$ have comparable laws inside $Q(0,N)$.
The explicit law of a section of the loop erased random path 
given in \cite{Law99} (see also (5) in \cite{Mas})
then implies that the loop erasures of $S$ and $X'$ 
also have comparable laws inside $Q$.

Let 
\be
 F_1(x,n) = \left \{ |\sE^F_{\pd_i Q} ( \sL \sE^F_x S) | \le n,  
  T_x< \infty \right \}. 
\ee
Thus $F(0,x,n) \subset F_1(x,n)$. Then
\begin{align}
\nn
 \bP( F(0,x,n)) &\le  \bP( F_1(x,n)) \\
\nn
&= \bP( | \sE^F_{\pd_i Q} \sL(\sE^F_x S) | \le n | T_x<\infty) \, \bP(T_x < \infty)\\
\nn
&\le C |x|^{2-d} \bP( |\sE^F_{\pd_i Q} \sL(  \sE^F_x X' )| \le n  ) \\ 
\nn 
&\le C |x|^{2-d} \bP( |\sE^F_{\pd_i Q} \sL(  \sE^F_x S)| \le n  ).
\end{align}
Taking $n=\lam N^2$, so that $\lam^{-1} \ge c |x|^2 n^{-1}$, 
and using Proposition \ref{P:len-lb} completes the proof. \qed

\section{Upper bound on $| B_\sU(0,n) |$} \label{sec:ub}

Recall that $\sU(x)$ is the component of the USF containing $x \in \Z^d$.
It is well-known \cite[Theorem 4.2]{Pem91} that 
for $d \ge 5$ and $x \not= y \in \Z^d$ we have 
\eqn{e:cnctd-bnd}
{ c |x - y|^{4-d} 
  \le \bP ( y \in \sU(x) )   
  \le C |x - y|^{4-d}. }
A corollary of this bound is that the volume of $\sU_0 \cap B(r)$ 
grows as $r^4$ in expectation. Our main result in the previous section, 
Theorem \ref{T:Ass2}, is a variant of the upper bound in \eqref{e:cnctd-bnd} 
that gives control over the length of the path connecting $x$ and $y$. 
Since that bound was formulated in terms of a single LERW, the exponent $4-d$ 
changes to $2-d$. In this section we extend Theorem \ref{T:Ass2} to control 
the volume of balls in the intrinsic metric.

\begin{theorem}
\label{thm:B_U-moments}
Assume $d \ge 5$, and let $\sU = \sU_{\bZ^d}$. There exists a constant $C_1$ such that 
for all $k \ge 0$ we have
\be \label{e:ubk}
 \bE \big( |B_\sU(0,n)|^k \big) \le C_1^k k! n^{2k}. 
\ee
Hence there are constants $c_1 > 0$ and $C_2$ such that 
\be \label{e:ub-exp}
 \bP ( |B_\sU(0,n)| \ge \lam n^2 ) \le C_2 e^{-c_1 \lam}, 
   \quad \lambda > 0,\, n \ge 1.
\ee
\end{theorem}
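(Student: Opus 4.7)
The approach is to establish the moment bound \eqref{e:ubk} directly, and then deduce the exponential tail \eqref{e:ub-exp} by a routine Chernoff argument. Starting from
$$ \bE|B_\sU(0,n)|^k = \sum_{y_1, \ldots, y_k \in \bZ^d} \bP(y_i \in B_\sU(0,n) \text{ for all } i), $$
I would sample $\sU$ by Wilson's algorithm rooted at infinity, processing the vertices in the order $0, y_1, y_2, \ldots, y_k$. Let $\eta_0$ be the LERW from $0$ to $\infty$, and for $l \ge 1$ let $\eta_l$ be the loop-erasure of an independent walk $S^{y_l}$ run up to its first hit on the previously built tree $T_{l-1} := \eta_0 \cup \eta_1 \cup \cdots \cup \eta_{l-1}$, attaching at some vertex $v_l \in T_{l-1}$. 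Since later additions do not alter existing tree-distances, $d_\sU(0, y_l) = |\eta_l| + d_{T_{l-1}}(v_l, 0)$, so the event $\{y_l \in B_\sU(0,n)\}$ forces both $d_{T_{l-1}}(v_l, 0) \le n$ and $|\eta_l| \le n - d_{T_{l-1}}(v_l, 0)$.

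The heart of the proof is a per-step estimate: uniformly in $T_{l-1}$,
$$ \sum_{y_l \in \bZ^d} \bP(y_l \in B_\sU(0,n) \mid T_{l-1}) \le C\,l\,n^2. $$
Conditional on $T_{l-1}$ the walk $S^{y_l}$ is independent of the past; a union bound over its first-hit vertex together with Theorem \ref{T:Ass2} gives
$$ \bP(y_l \in B_\sU(0,n) \mid T_{l-1}) \le \sum_{v \in T_{l-1},\, d_{T_{l-1}}(v,0) \le n} \bP\big(F(y_l, v, n - d_{T_{l-1}}(v,0))\big) \le C \sum_{v} (1+|v-y_l|)^{2-d} e^{-c|v-y_l|^2/(n-d_{T_{l-1}}(v,0))}. $$
The translation-invariant Gaussian-type estimate $\sum_{y_l \in \bZ^d} (1+|v-y_l|)^{2-d} e^{-c|v-y_l|^2/m} \le C(1+m)$ then reduces the problem to bounding $\sum_{v \in T_{l-1},\, d(v,0) \le n} (1+n-d_{T_{l-1}}(v,0))$. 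The key combinatorial observation is that $T_{l-1}$ decomposes as a disjoint union of the $l$ paths $\eta_0, \eta_1 \setminus \{v_1\}, \ldots, \eta_{l-1} \setminus \{v_{l-1}\}$, along each of which the tree-distance from $0$ changes by exactly one per step; hence $|\{v \in T_{l-1} : d_{T_{l-1}}(v,0) = j\}| \le l$ for every $j$, giving $\sum_v (1+n-d_{T_{l-1}}(v,0)) \le l \sum_{j=0}^n (1+n-j) \le C l n^2$.

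Chain-rule conditioning on $T_0 \subset T_1 \subset \cdots \subset T_{k-1}$ then yields
$$ \bE|B_\sU(0,n)|^k \le \prod_{l=1}^k (C l n^2) = C^k k! n^{2k}, $$
establishing \eqref{e:ubk} with $C_1 = C$. Inserting the moment bound into the power series for the moment generating function gives $\bE \exp(s|B_\sU(0,n)|/n^2) \le (1-sC_1)^{-1}$ for $s < 1/C_1$, and a Chernoff bound with $s = 1/(2C_1)$ produces \eqref{e:ub-exp} with $c_1 = 1/(2C_1)$ and $C_2 = 2$. I expect the main obstacle to be the per-step estimate: it relies on two independent ingredients, namely the translation-invariant Gaussian-type sum and the ``at most $l$ vertices per tree-distance level'' structural fact about $T_{l-1}$, and without either one the per-step bound would be worse than $C l n^2$ and the moment sum would fail to telescope into the sharp $k! n^{2k}$.
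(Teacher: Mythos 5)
Your argument is correct, and it reaches \eqref{e:ubk} by a genuinely different route from the paper. The paper uses a tree--graph inequality in the style of Aizenman--Newman: it fixes all $y_1,\dots,y_k$ at once, sums over the $k$ branch points $z_1,\dots,z_k$ and over all labelled tree topologies $\tau\in\cT(k)$ of the minimal subtree spanning $\{0,y_1,\dots,y_k,\infty\}$, and then uses one global application of Wilson's algorithm (with a carefully chosen edge ordering) to write the event as an intersection of $2k$ \emph{independent} events $F(\cdot,\cdot,n)$, each controlled by Theorem \ref{T:Ass2}; the summations are then performed edge by edge via Lemma \ref{lem:conv-bnd}, and the factor $k!$ comes from $|\cT(k)|=(2k-1)!!\le 2^k k!$. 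You instead run Wilson's algorithm sequentially and prove a conditional per-step estimate, with the union bound over the attachment vertex $v\in T_{l-1}$ playing the role of the paper's sum over the branch point $z_l$, and the structural fact that $T_{l-1}$ is a union of $l$ strands along each of which $d_{T_{l-1}}(\cdot,0)$ is strictly monotone (hence at most $l$ vertices per distance level) playing the role of the count of topologies; the factorial emerges as $\prod_{l\le k} l$. Both proofs consume exactly the same analytic inputs --- Theorem \ref{T:Ass2} and the Gaussian convolution bound of Lemma \ref{lem:conv-bnd} --- so the difference is purely organizational. The paper's version buys completely transparent independence (no conditioning is needed once the edge ordering is fixed), while yours avoids the bookkeeping of labelled tree graphs and the two separate edge enumerations; the one point in your write-up that carries the corresponding burden is that the per-step bound $\sum_{y_l}\bP(y_l\in B_\sU(0,n)\mid T_{l-1})\le Cln^2$ must hold \emph{uniformly} over all realizations of $T_{l-1}$, which it does precisely because the level-set bound is a deterministic property of any output of $l$ runs of Wilson's algorithm. (Two harmless details worth noting: the identification of ``first hit of $T_{l-1}$ at $v$'' with the single-vertex event $F(y_l,v,\cdot)$ is valid since on that event the first hitting times of $v$ and of $T_{l-1}$ coincide; and the degenerate case $n-d_{T_{l-1}}(v,0)=0$ can be sidestepped entirely, since the level-set bound already gives $|\{v\in T_{l-1}: d(v,0)\le n\}|\le l(n+1)$ and applying Theorem \ref{T:Ass2} with parameter $n$ throughout still yields $Cln^2$.) Your derivation of \eqref{e:ub-exp} from \eqref{e:ubk} matches the paper's.
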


\begin{proof}
The bound \eqref{e:ub-exp} follows easily from \eqref{e:ubk} using Markov's
inequality and the power series for $e^x$.

We prove \eqref{e:ubk} by induction on $k$. The case $k = 0$ holds trivially.
We fix $k \ge 1$ and $y_1, \dots, y_k \in \Z^d$, and
estimate the probability
\eqnst
{ \bP \big( y_1, \dots, y_k \in B_\sU(0,n) \big). }
This can be done similarly to the ``tree-graph inequalities'' known
in percolation \cite{AN84}. To facilitate notation, we write $y_0 = 0$. On the event 
$y_1, \dots, y_k \in \sU_0$ consider the minimal subtree 
$T(y_0, \dots, y_k) \subset \sU_0$ that contains 
the vertices $y_0, \dots, y_k$. This tree is finite. 
Since $\sU_0$ has one end \cite{BLPS}, \cite{LP:book}, 
there is a unique infinite path in $\sU_0$, whose only vertex in $T(y_0, \dots, y_k)$
is its starting vertex.
Let us write $T(y_0, \dots, y_k, \infty)$ for the infinite 
subtree of $\sU_0$ obtained by adding this infinite path to
$T(y_0, \dots, y_k)$. 

Now let us consider the ``topology'' of 
$T(y_0, \dots, y_k, \infty)$. In the case $k = 1$, it is
easy to see that there exists a vertex $z_1 \in T(y_0, y_1, \infty)$
such that the paths $T(y_0,z_1)$, $T(y_1,z_1)$ and $T(z_1,\infty)$ 
(some of which may degenerate to a single vertex) are edge-disjoint.
In the general case $k \ge 1$, we have $k$ ``branch points''
$z_1, \dots, z_k$. We use a fixed rule for indexing
the $z_i$'s, in requiring that for every $i \ge 1$
the path $T(y_i,z_i)$ is edge-disjoint from
$T(y_0, \dots, y_{i-1}, \infty)$. See Figure \ref{fig:taus}.
\begin{figure}
\includegraphics{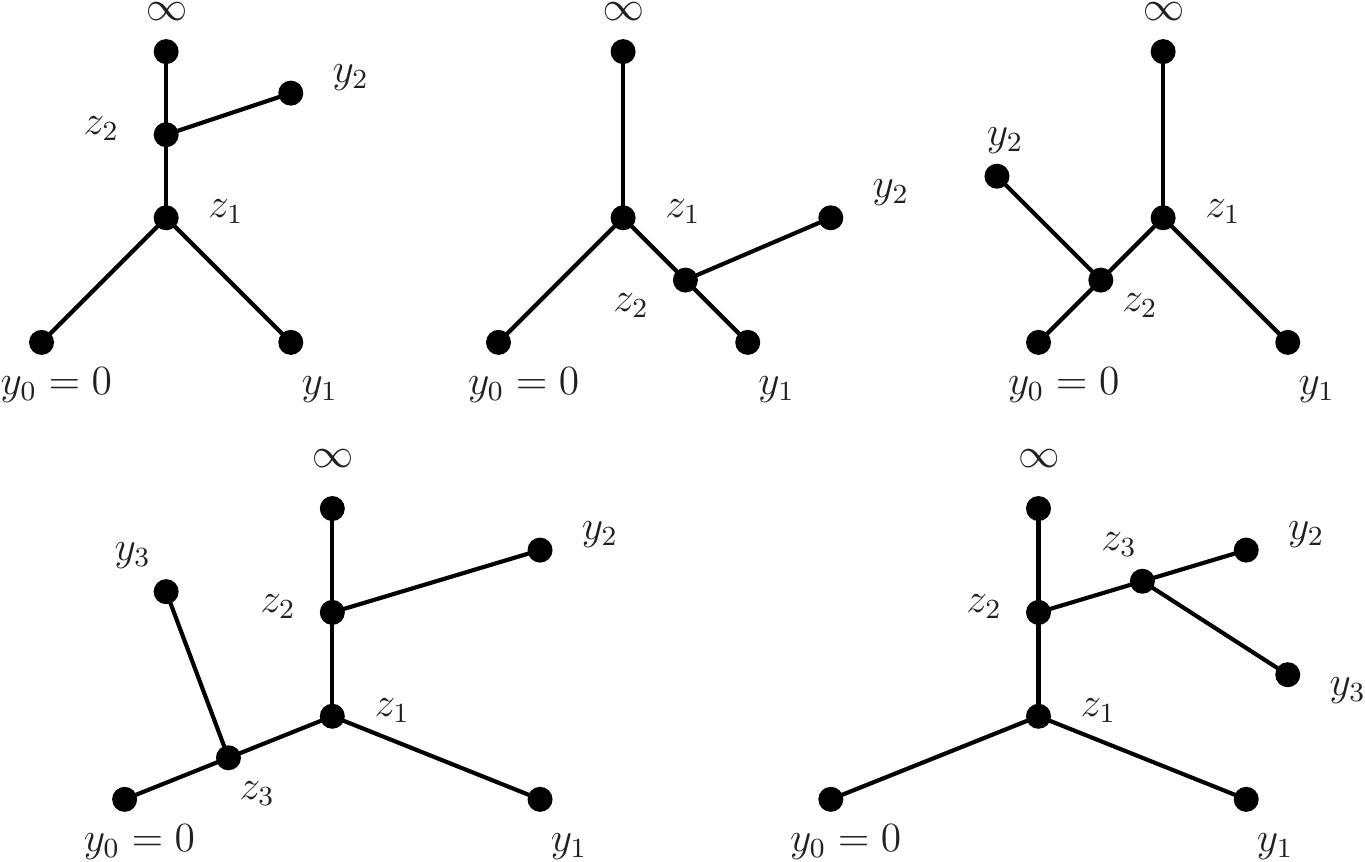}

\caption{\label{fig:taus} 
 All three labelled tree graphs with $k = 2$, 
and two of the five possible labelled tree graphs with $k = 3$.}
\end{figure}

We can formalize the construction via the following 
recursive procedure. Let $\cT(0)$ denote the set containing 
the unique tree with vertex set $\{ 0, \infty \}$. Assume
that the collection $\cT(k-1)$ of trees with vertex set 
$\{ 0, \dots, k-1 \} \cup \{ \infty \} \cup \{ \bar{1}, \dots, \overline{k-1} \}$
has been defined for some $k \ge 1$. Let $\cT(k)$ denote 
the collection of trees with vertex set 
$\{ 0, \dots, k \} \cup \{ \infty \} \cup \{ \bar{1}, \dots, \bar{k} \}$
that can be obtained in the following way. Pick some
$\tau' \in \cT(k-1)$, and pick one of the edges of $\tau'$.
Split this edge into two by introducing a new vertex 
$\bar{k}$ on the edge, and add the new edge $\{ k, \bar{k} \}$
to $\tau'$. 
It is easy to see that any $\tau \in \cT(k)$ has the following
properties (see Figure \ref{fig:taus}):
\begin{itemize}
\item[(i)] $\deg_\tau(\infty) = 1 = \deg_\tau(y_i)$, 
$i = 0, \dots, k$.
\item[(ii)] $\deg_\tau(\bar{i}) = 3$, $i = 1, \dots, k$.
\end{itemize}

With the above definitions, the event
$\{ y_1, \dots, y_k \in \sU_0 \}$ implies that there exist
$z_1, \dots, z_k \in T(y_0, \dots, y_k, \infty)$ and 
$\tau \in \cT(k)$ such that $T(y_0, \dots, y_k, \infty)$ is the
edge-disjoint union of paths $T(\varphi(r),\varphi(s))$, where
$\{ r, s \} \in E(\tau)$, and $\varphi : V(\tau) \to \Z^d \cup \{ \infty \}$ 
is defined by 
\eqn{e:def-varphi}
{ \begin{cases} 
  \varphi(i) = y_i & i = 0, \dots, k; \\
  \varphi(\infty) = \infty; \\
  \varphi(\bar{i}) = z_i & i = 1, \dots, k.
  \end{cases} }
Note that the choice of $\tau$ is not unique, due to possible 
coincidences between the vertices $y_0, \dots, y_k, z_1, \dots, z_k$. 
We neglect the overcounting resulting from this, for an upper bound.

If the additional restriction $d_\sU(0,y_i) \le n$, $i = 1, \dots, k$
is in place, we must also have $d_\sU(\varphi(r), \varphi(s)) \le n$
for all $\{ r, s \} \in E(\tau)$ such that 
$r, s \not= \infty$. We define the event
\eqnsplst
{ &E(y_1, \dots, y_k, z_1, \dots, z_k, \tau, n) \\
  &\qquad= \left\{ \parbox{8.5cm}{$T(y_0, \dots, y_k, \infty) 
      = \cup_{\{ r, s \} \in E(\tau)} 
      T(\varphi(r),\varphi(s))$ as an edge-disjoint union and
      $d_\sU(\varphi(r),\varphi(s)) \le n$ for all 
      $\{ r, s \} \in E(\tau)$ such that 
      $r, s \not= \infty$} \right\}. }  
Considering all possible choices of $\tau$ and $z_1, \dots, z_k$,
we get 
\eqnsplst
{ \bE \big( | B_\sU(0,n) |^k \big) 
  &= \sum_{y_1, \dots, y_k \in \Z^d} 
     \bP \big( y_1, \dots, y_k \in B_\sU(0,n) \big) \\
  &\le \sum_{\tau \in \cT(k)} \sum_{y_1, \dots, y_k \in \Z^d} 
     \sum_{z_1, \dots, z_k \in \Z^d}
     \bP \big( E(y_1, \dots, y_k, z_1, \dots, z_k, \tau, n) \big). }

We use Wilson's algorithm \cite{W,LP:book} to replace the complicated
event $E(y_1, \dots )$ by a slightly larger event that is easier to handle.
For this, enumerate the edges of $\tau$ as
\eqnst
{ \{ r_0, s_0 \}, \{ r_1, s_1 \}, \dots, \{ r_{2k}, s_{2k} \}, }
where the labelling is chosen in such a way that the following two
properties are satisfied (see Figure \ref{fig:enum}(a)):
\begin{figure}
\includegraphics{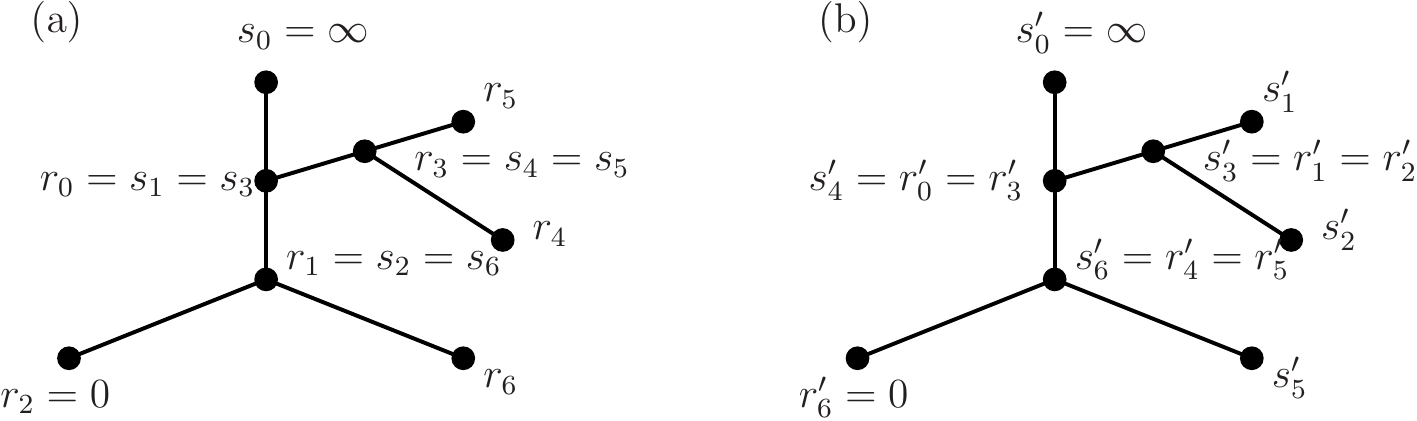}
\caption{\label{fig:enum}(a) A possible enumeration of edges
for the application of Wilson's method.
(b) A possible enumeration of edges for performing the
summations using \eqref{e:conv-bnd} in the order
$j = 1, 2, \dots, 2k$. Summing over the spatial location
$\varphi(s'_1)$ eliminates the factor involving the edge
$\{ s'_1, r'_1 \}$. Following this, it is possible to sum 
over $\varphi(s'_2)$, etc.}
\end{figure}
\begin{itemize}
\item[(a)] $s_0 = \infty$.
\item[(b)] For every $j = 1, \dots, 2k$, the set of edges
$\{ \{ r_\ell, s_\ell \} : \ell = 0, \dots, j-1 \}$ spans
a subtree of $\tau$, and $s_j$ is a vertex of this subtree.
\end{itemize}
Using Wilson's method with random walks started at
$\varphi(r_0), \dots, \varphi(r_{2k})$, we see that 
\eqn{e:E-incl}
{ E(y_1, \dots, y_k, z_1, \dots, z_k, \tau, n)
  \subset \bigcap_{j=1}^{2k} F(\varphi(s_j),\varphi(r_j),n). }
Here $F(\cdot, \cdot, n)$ are the events defined in \eqref{e:Fndef}.  
Importantly, the events on the right hand side are independent. 
Theorem \ref{T:Ass2}  and the inclusion \eqref{e:E-incl} imply that
\eqnspl{e:E-bnd}
{ &\bP \big( E(y_1, \dots, y_k, z_1, \dots, z_k, \tau, n) \big) \\
  &\qquad\le \prod_{j=1}^{2k} 
      C (1 + |\varphi(s_j) - \varphi(r_j)|)^{2-d}
      \exp \left[ - c \frac{|\varphi(s_j) - \varphi(r_j)|^2}{n} \right]. }
It remains to estimate the sum of the right hand side 
of \eqref{e:E-bnd} over all choices of the $y_i$'s and
$z_i$'s. For this it will be convenient to use a different
enumeration of $E(\tau)$. Suppose that  
\eqnst
{ \{ r'_0, s'_0 \}, \{ r'_1, s'_1 \}, \dots,
  \{ r'_{2k}, s'_{2k} \} }
satisfies the following properties (see Figure \ref{fig:enum}(b)).
\begin{itemize}
\item[(a')] $s'_0 = \infty$ and $r'_{2k} = 0$.
\item[(b')] For every $j = 1, \dots, 2k$ the set
$\{ \{ r'_\ell, s'_\ell \} : \ell = j, \dots, 2k \}$
induces a connected subtree of $\tau$, and $s'_j$ is a 
leaf of this subtree.
\end{itemize}
For ease of notation, let us write 
$u_j = \varphi(r'_j)$ and $w_j = \varphi(s'_j)$.
With the new enumeration the right hand side of 
\eqref{e:E-bnd} takes the following form:
\eqnspl{e:E-bnd2}
{ &\bP \big( E(y_1, \dots, y_k, z_1, \dots, z_k, \tau, n) \big) \\
  &\qquad\le \prod_{j=1}^{2k} 
      C (1 + |w_j - u_j|)^{2-d}
      \exp \left[ - c \frac{|w_j - u_j|^2}{n} \right]. }
Note again that the $w_j$'s and $u_j$'s are 
$z_i$'s and $y_i$'s, determined implicitly 
by $\tau$. Importantly, property (b') of the enumeration 
implies that if $w_j = \varphi(s'_j) = z_i$ for some $i, j$,
then the variable $z_i$ does not occur in the product
\eqnst
{ \prod_{\ell=j+1}^{2k} 
      C (1 + |w_j - u_j|)^{2-d}
      \exp \left[ - c \frac{|w_j - u_j|^2}{n} \right]. }
Similar considerations apply if $w_j = \varphi(s'_j) = y_i$
for some $i, j$. The summation over 
$y_1, \dots, y_k$ and $z_1, \dots, z_k$ 
can be accomplished by the following lemma.

\begin{lemma}
\label{lem:conv-bnd}
For any $u \in \Z^d$, we have
\eqn{e:conv-bnd}
{ \sum_{w \in \Z^d} (1 + |w - u|)^{2-d}
    \exp \left[ - c \frac{|w - u|^2}{n} \right]
  \le C n. }
\end{lemma}

We apply Lemma \ref{lem:conv-bnd} successively 
to the factors with $j = 1, \dots, 2k$ on the 
right hand side of \eqref{e:E-bnd2}. See Figure \ref{fig:enum}(b)
for an example of how the edges of $\tau$ are 
successively removed by the summations.
We obtain
\eqn{e:B_U-moments}
{ \bE \big( | B_\sU(0,n) |^k \big) 
  \le \sum_{\tau \in \cT(k)} (C n)^{2k}. }
Since the number of trees in $\cT(k)$ is 
$1 \cdot 3 \cdot \cdots (2k-1) \le 2^k k!$, this proves \eqref{e:ubk}.
\qed
\end{proof}

\begin{remark}
The statements of Theorem \ref{thm:B_U-moments} still hold, with 
essentially the same proof, when $\sU = \sU_D$, with any $D \subset \bZ^d$. 
Note that $\sU_0$ still has one end. This follows from \cite[Proposition 3.1]{LMS},
and the fact that the component of $0$ under the measure 
$\mathsf{WSF}_o$ in the domain $D$ is stochastically smaller
then it is in $\bZ^d$. Therefore, a decomposition into events
$E(y_1,\dots,y_k,z_1,\dots,z_k,n)$ still holds (with $\sU = \sU_D$), 
where now all vertices are in $D$. The inclusion \eqref{e:E-incl} still holds,
with the events $F$ having the same meaning as before. This allows
to bound the summations in exactly the same way as in $\bZ^d$.
\end{remark}

\section{Lower bounds on volumes} \label{sec:vol-lb}

In this section we return to the setup of Section \ref{sec:lew},
in order to give a lower bound on the volume of $\sU_0$. 
We first estimate the number of vertices of $\sU_0$ in shells 
$Q_{n+m} \setminus Q_n$. Recall that $Q_N \subset D \subset \bZ^d$, 
and $n, m$ satisfy $16 \le n < n+m \le N$, with $m \le n/8$. 
We have $L = \sL ( \sE^F_{D^c} (S))$, $\al = \sE^F_{ \pd_i Q_n } L$, 
and $x_0 \in \pd_i Q_n$ is the endpoint of $\al$. 
The remaining piece of $L$ is $L' = \sB^F_{\pd_i Q_n}L$, and 
$\beta = \sE^F_{\pd_i Q(x_0,m)} L'$.
See Figure \ref{fig:boxes-cycle-pop}.
\begin{figure}
\centerline{\includegraphics{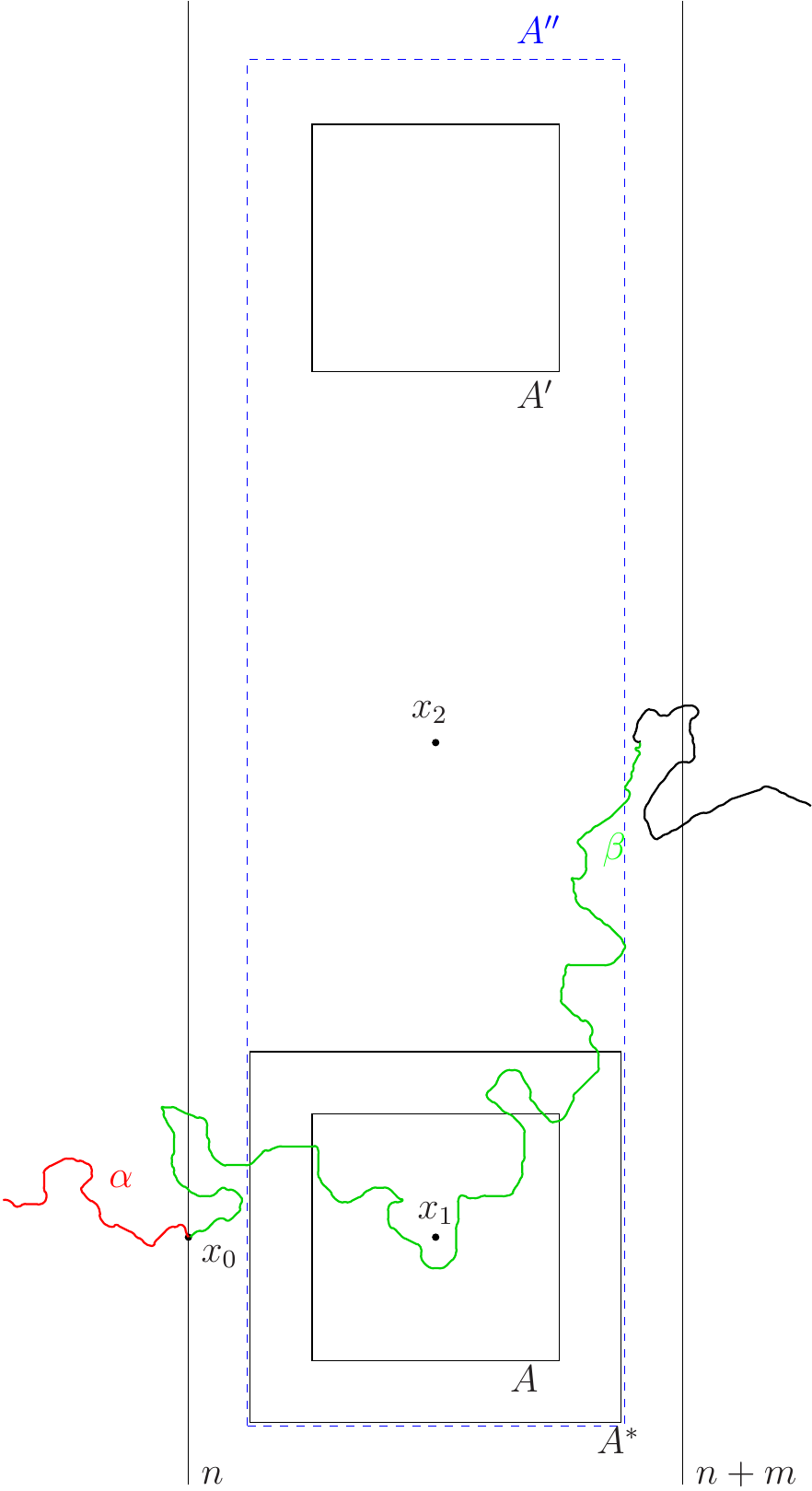}}
\caption{\label{fig:boxes-cycle-pop} Boxes for the cycle popping argument.}
\end{figure}
Recall that when $x_0 \in \bH_n$, we defined 
$A = A(x_0) = Q(x_0 + (m/2) e_1, m/4)$ and $x_1 = x_0 + (m/2) e_1$,
with appropriate rotations applied when $x_0$ was on a different 
face of $Q_n$. We will now also need a point $x_2 \in Q_{n+m} \setminus Q_n$
of order $m$ away from $A$, and further boxes contained in 
$Q_{n+m} \setminus Q_n$ that we define as follows. 
If $x_0 \in \bH_n$ and the second coordinate of $x_0$ is negative, let 
\eqnspl{e:A'-A''-def}
{ x_2
  &= x_1 + m e_2 \\
  A'
  &= A'(x_0)
  = Q( x_1 + 2 m e_2, m/4 ) \\
  A''
  &= A''(x_0)
  = x_1 + [-3m/8,3m/8] \times [-m,3m] \times [-m,m]^{d-2} \cap \bZ^d. }
If $x_0 \in \bH_n$ and the second coordinate of $x_0$ is positive,
we replace $e_2$ by $-e_2$ and $[-m,3m]$ by $[-3m,m]$. If 
$x_0$ is on a different face of $Q_n$, we replace $e_1$ and $e_2$
by two other suitable unitvectors.

The key technical estimate is to show that $\beta \cap A$ has capacity
of order $m^2$ with probability bounded away from $0$, which we
do in the next section.

\subsection{A capacity estimate}

Let $S^{x_2}$ be a random walk with $S^{x_2}(0) = x_2$, independent of $S$, 
$\tilde{X}$, etc.

\begin{proposition} \label{P:capacity}
Assume $N \ge 1$, $Q_{4N} \subset D \subset \bZ^d$, and the setup of Section \ref{sec:lew}.\\
(a) There exists $c_1 = c_1(d) > 0$ such that 
\eqnst
{ \bP \big( \text{$S^{x_2}$ hits $(A \cap \beta)$} \,\big|\, \alpha \big)
  \ge c_1 m^{4-d}. } 
(b) We have
\be \label{e:capest1}
 \bP \big( c m^2 \le \Capac (A \cap \beta) \le C_1 m^2 \,\big|\, \alpha \big)
  \ge c > 0. 
\ee
\end{proposition}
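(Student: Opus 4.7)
The plan is to establish (b) first, and then deduce (a) from it via the standard capacity--hitting relation. Since $x_2$ lies at distance of order $m$ from every point of $A$, we have $G(x_2, z) \asymp m^{2-d}$ uniformly on $A \cap \beta$, so for any realization of $\beta$,
\begin{equation*}
\bP^{x_2}(T_{A \cap \beta} < \infty \mid \alpha, \beta) = \sum_{z \in A \cap \beta} G(x_2, z)\, e_{A \cap \beta}(z) \ge c\, m^{2-d}\, \Capac(A \cap \beta).
\end{equation*}
On the positive-probability event produced by (b), the right-hand side is $\ge c m^{4-d}$, so averaging over $\beta$ gives (a).

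For the upper bound in (b), I would simply combine $\Capac(K) \le |K|$ with Lemma \ref{L:Mub} (which gives $\bE(H_A(\beta) \mid \alpha) \le C m^2$): Markov's inequality then forces $|A \cap \beta| \le C_1 m^2$ with conditional probability as close to $1$ as desired by taking $C_1$ large.

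The core of the argument is the lower bound in (b), for which I would invoke the variational characterization
\begin{equation*}
\Capac(K)^{-1} = \inf_\mu \sum_{x, y \in K} G(x, y)\, \mu(x)\, \mu(y),
\end{equation*}
with the infimum over probability measures on $K$; specialized to the uniform choice $\mu = |K|^{-1} 1_K$ this gives $\Capac(K) \ge |K|^2 / \sum_{x, y \in K} G(x, y)$. Corollary \ref{C:HAlb} already supplies $|A \cap \beta| \ge c_0 m^2$ with conditional probability $\ge c_0$, so the remaining task is to bound the denominator from above, which by Markov's inequality reduces to
\begin{equation*}
\bE\Big( \sum_{x,y \in A \cap \beta} G(x, y) \,\Big|\, \alpha \Big) = \sum_{x,y \in A} \bP(x, y \in \beta \mid \alpha)\, G(x, y) \le C m^2.
\end{equation*}
The input I would use is the two-point estimate
\begin{equation*}
\bP(x, y \in \beta \mid \alpha) \le C\bigl[ \wt G_D(x_0, x)\, \wt G_D(x, y) + \wt G_D(x_0, y)\, \wt G_D(y, x) \bigr] \le C\, m^{2-d}\, (|x-y| \vee 1)^{2-d},
\end{equation*}
which is the natural two-point analogue of the three-point decomposition \eqref{e:3pg} used in Lemma \ref{L:ptlb}, obtained by conditioning on the order in which $\wt X$ visits $x$ and $y$ and then applying Wilson's method. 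Given this, and using $\wt G_D \asymp G$ on $A$ from \eqref{e:hGn} together with the fact that $\sum_y (|x-y| \vee 1)^{4-2d}$ is bounded uniformly in $x$ whenever $d \ge 5$, I obtain
\begin{equation*}
\bE\Big( \sum_{x,y \in A \cap \beta} G(x, y) \,\Big|\, \alpha \Big) \le C\, m^{2-d} \sum_{x,y \in A} (|x-y| \vee 1)^{4-2d} \le C\, m^{2-d}\, |A| = C\, m^2.
\end{equation*}
Intersecting the resulting high-probability event $\{\sum G \le K m^2\}$ with the event from Corollary \ref{C:HAlb} and choosing $K$ large yields $\Capac(A \cap \beta) \ge c\, m^2$ on an event of positive conditional probability.

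The main obstacle is the two-point upper bound for $\bP(x, y \in \beta \mid \alpha)$. The one-point identity \eqref{e:3pg} used in Lemma \ref{L:ptlb} rests on the decomposition from \cite[Lem.~6.1]{BM1} combined with the Separation Lemma \ref{L:sep} for non-intersection of two walks; promoting this to a two-point upper bound requires the analogous decomposition with two marked vertices (handled by conditioning on the visit order), while in place of two-walk non-intersection one now needs an upper bound on the non-intersection probability of three independent simple random walks. This last ingredient is elementary for $d \ge 5$ since independent SRW have strictly positive non-intersection probability there, but setting up the Wilson-type decomposition cleanly while handling the $\alpha$-conditioning through $\wt X$ rather than $S$ is the main piece of bookkeeping to get right.
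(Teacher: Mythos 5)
Your proposal is correct, and it reaches the result by a route that is recognizably parallel to, but organized in the opposite direction from, the paper's. The paper proves (a) first by a Paley--Zygmund argument applied to the count $U = \sum_{z \in A} I[z \in \beta]\, I[\text{$S^{x_2}$ hits $z$}]$, bounding $\bE(U)$ from below via the one-point estimate \eqref{e:zhit} of Lemma \ref{L:ptlb} and $\bE(U^2)$ from above via exactly the two-point bound you write down, $\bP(x,y \in \beta) \le C m^{2-d} G(x,y)$, together with $\sum_{x,y \in A} G(x,y)^2 \le C m^d$ for $d \ge 5$; part (b) is then deduced from (a) using $m^{2-d}\Capac(A\cap\beta) \asymp \bP(T_{A\cap\beta}[S^{x_2}] < \infty \mid \beta)$ and Lemma \ref{L:Mub}. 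You instead prove (b) first via the energy lower bound $\Capac(K) \ge |K|^2/\sum_{x,y\in K}G(x,y)$, with $|A\cap\beta|$ controlled by Corollary \ref{C:HAlb} and the energy controlled in expectation by the same two-point bound, and then read off (a) from the last-exit decomposition. The two arguments use identical inputs; yours has the mild advantage that the auxiliary walk $S^{x_2}$ plays no role in establishing the capacity statement, while the paper's keeps everything inside one second-moment computation.

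One remark: the step you flag as ``the main obstacle'' is in fact the easiest part, and your worry about three-walk non-intersection and a Wilson-type decomposition with two marked vertices is misplaced. For an \emph{upper} bound on $\bP(x,y\in\beta\mid\alpha)$ one simply discards the loop-erasure: since $\beta$ is contained in the range of $\wt X$, the event $\{x,y\in\beta\}$ is contained in the event that $\wt X$ visits both $x$ and $y$, and decomposing according to the order of first visits gives $\bP(x,y\in\beta\mid\alpha) \le \max_{z\in\pd_i A^*}\big[\wt G_D(z,x)\wt G_D(x,y) + \wt G_D(z,y)\wt G_D(y,x)\big] \le C m^{2-d} G(x,y)$, using \eqref{e:hGn}. (Taking the maximum over the entry point $z \in \pd_i A^*$ rather than writing $\wt G_D(x_0,\cdot)$ directly also sidesteps the fact that \eqref{e:wtG} is stated for arguments off $\alpha$, whereas $x_0 \in \alpha$.) Non-intersection estimates are only needed for \emph{lower} bounds such as \eqref{e:zhit}, which you correctly take as given from Lemma \ref{L:ptlb}.
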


\begin{proof}
(a) For ease of notation, we omit the conditioning on $\alpha$.
Let 
\eqnst
{ U  := \sum_{ z \in A} I [z \in \beta] 
     I [\text{$S^{x_2}$ hits $z$}], }
 so that    
$$    \bP \big( \text{$S^{x_2}$ hits $(A \cap \beta)$} \big)
  = \bP ( U > 0 ). $$
Using Lemma \ref{L:ptlb}, we have
\eqnsplst
{ \bE ( U ) 
  = \sum_{z \in A} \bP ( z \in \beta ) 
    \bP \big( T_z[S^{x_2}] < \infty \big) 
  \ge c m^d m^{2-d} m^{2-d}
  = c m^{4-d}. } 
On the other hand, 
\eqn{e:U^2}
{ \bE \big( U^2 \big)
  = \sum_{x, y \in A} \bP ( x, y \in \beta ) \,
    \bP \big( T_x[S^{x_2}] < \infty,\, T_y[S^{x_2}] < \infty \big). }
Since the process $\wt X$ generating $L'$ must pass through
$\pd A^*$ in order for the event $x, y \in \beta$ to occur,
we have
\eqnsplst
{ \bP ( x, y \in \beta )
  &\le \max_{z \in \pd A^*} [ \wt G_D(z,x) \wt G_D(x,y) + \wt G_D(z,y) \wt G_D(y,x) ] \\
  &\le C m^{2-d} G(x,y). }
For the other term in the right hand side of \eqref{e:U^2} we have
\eqnsplst
{ \bP \big( T_x[S^{x_2}] < \infty,\, T_y[S^{x_2}] < \infty \big)
  &\le [ G(x_2,x) G(x,y) + G(x_2,y) G(y,x) ] \\
  &\le C m^{2-d} G(x,y). }
Since $d \ge 5$, we have $\sum_{x, y \in A} G(x,y)^2 \le C m^d$, which gives
$\bE \big( U^2 \big) \le C m^{4-d}$.

The Paley-Zygmund inequality then gives 
\eqnst
{ \bP \big( \text{$S^{x_2}$ hits $(A \cap \beta)$} \big)
  = \bP ( U > 0 )
  \ge \frac{\bE ( U )^2}{\bE \big( U^2 \big)}
  \ge c m^{4-d}. }

(b)   Since $\Capac(A \cap \beta) \le C |A \cap \beta|$,
and $m^{2-d} \Capac(A \cap \beta) \asymp 
\bP \big( T_{A \cap \beta}[S^{x_2}] < \infty \,\big|\, \beta \big)$,
combining (a) with Lemma \ref{L:Mub} gives (b). 
\qed  
\end{proof}

Assume now, similarly to Proposition \ref{P:len-lb}, that 
$k \ge 1$ and $m \ge 4$ such that $N/2 \le k m < N-m$.
Recall that for $j=1, \dots, k$ we denote 
$\al_j = \sE^F_{\pd_i Q(0, jm) } L$. Let $Y_j = \al_j ( |\al_j|)$ 
be the last point in $\al_j$, and
$\beta_j = \sE^F_{\pd_i Q(Y_j,m)}( \sB^F_{\pd_i Q(0, jm) }L)$
be the path $L$ between $Y_j$ and its first hit after $Y_j$ on
$\pd_i( Y_j, m)$. Let $Y_{j,1}$ and $Y_{j,2}$ be the points $x_1$ and 
$x_2$ defined with respect to $x_0 = Y_j$, respectively.
Define the following event, measurable with respect to $L$:
\eqn{e:G-event}
{ G(c_1, c_2, C_1)
  = \left\{ \parbox{9cm}{there are at least $c_2 k$ indices
       $j$ with $1 \le j \le k$ such that 
       $\bP \big( T_{A(Y_j) \cap \beta_j}[S^{Y_{j,2}}] < \infty \,\big|\, L \big) 
       \ge c_1 m^{4-d}$ and $|Q(Y_j,m) \cap \beta_j| \le C_2 m^2$} \right\}. }
Proposition \ref{P:capacity} and an argument similar to that of 
Proposition \ref{P:len-lb} gives the following corollary.

\begin{corollary}
\label{C:enough boxes} 
Under the assumptions of Proposition \ref{P:capacity}, 
there exist $c_1, c_2 > 0$ and $C_2$ such that we have
\eqnspl{e:good boxes}
{ \bP \left[ G(c_1, c_2, C_2)  \right] 
  \ge 1 - \exp( -c k ). } 
\end{corollary}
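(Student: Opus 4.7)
The plan is to mimic the proof of Proposition \ref{P:len-lb}, replacing the event $G_j^c = \{|\beta_j| \ge c_0 m^2\}$ used there by a per-shell capacity/volume event, and then to iterate along the natural filtration $\sF_j = \sigma(\al_j)$ to extract the desired concentration bound.

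First, for each $j = 1, \dots, k$, define the shell-wise event
\eqnst
{ H_j
  = \Big\{ \bP\big( T_{A(Y_j) \cap \beta_j}[S^{Y_{j,2}}] < \infty \,\big|\, L \big)
      \ge c_1 m^{4-d} \Big\}
    \cap \Big\{ |Q(Y_j,m) \cap \beta_j| \le C_2 m^2 \Big\}. }
Since $S^{Y_{j,2}}$ is independent of $L$, the conditional hitting probability in the first factor depends only on $Y_j$ and $\beta_j$, so $H_j$ is $\sigma(\al_j, \beta_j)$-measurable. By the Domain Markov property (Lemma \ref{L:dmp}), conditional on $\al_j$ the continuation of $L$ past $Y_j$ has the law of $\sL(\sE^F_{\pd D} \wt X^{Y_j})$ with $\wt X^{Y_j}$ denoting SRW started at $Y_j$ and conditioned on $\{\tau_D < T^+_{\al_j}\}$. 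This is precisely the setup underlying Proposition \ref{P:capacity}, applied with $x_0 = Y_j$, so part (b) of that proposition (together with the standard equivalence $\Capac(\cdot) \asymp m^{d-2} \bP(T_{\cdot}[S^{Y_{j,2}}] < \infty \mid \beta_j)$ used to translate capacity into hitting probability, and Lemma \ref{L:Mub} to replace $|A(Y_j) \cap \beta_j|$ by a bound on the larger set $Q(Y_j,m) \cap \beta_j|$) yields
\eqnst
{ \bP(H_j \mid \sF_j) \ge c_0 > 0 }
for some $c_0 = c_0(d)$, uniformly in $\al_j$.

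Second, one iterates exactly as in Proposition \ref{P:len-lb}: along the filtration $\sF_1 \subset \sF_2 \subset \dots$, the indicators $\mathbf{1}_{H_j}$ stochastically dominate a sequence of i.i.d.\ Bernoulli$(c_0)$ random variables, so
\eqnst
{ \sum_{j=1}^k \mathbf{1}_{H_j}
  \succeq_{st} \mathrm{Bin}(k, c_0). }
A standard Chernoff bound then gives
\eqnst
{ \bP\Big( \sum_{j=1}^k \mathbf{1}_{H_j} < \tfrac{1}{2} c_0 k \Big)
  \le \exp(-c k), }
which with $c_2 = c_0/2$ is exactly \eqref{e:good boxes}.

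The only delicate point is the filtration/measurability step: one needs to check that the dependence of $H_j$ on $\beta_j$ (rather than merely on $\al_{j+1}$) does not disrupt the iteration. This is handled exactly as in Proposition \ref{P:len-lb} — the lower bound $\bP(H_j \mid \sF_j) \ge c_0$ is uniform in $\al_j$, and the joint exploration can be carried out one shell at a time, so at step $j$ we may condition on all earlier shells and still apply the single-shell estimate to $\beta_j$. With that in hand the stochastic domination and Chernoff estimate are routine.
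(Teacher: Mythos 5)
Your proposal is correct and follows exactly the route the paper intends: the paper's entire ``proof'' is the one-line remark that Proposition \ref{P:capacity} combined with the iteration scheme of Proposition \ref{P:len-lb} gives the corollary, and your argument (per-shell event $H_j$ with $\bP(H_j\mid\sF_j)\ge c_0$ via the Domain Markov property, Proposition \ref{P:capacity}(b), the capacity-to-hitting-probability equivalence and Lemma \ref{L:Mub}, followed by stochastic domination over a Binomial and a Chernoff bound) is precisely the intended expansion of that remark. Your attention to the measurability point ($H_j\in\sF_{j+1}$, since $Q(Y_j,m)\subset Q(0,(j+1)m)$) is a correct and worthwhile detail that the paper leaves implicit.
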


\begin{remark}
\label{R:cond-exit}
We note the following minor extension of Corollary \ref{C:enough boxes}.
Assuming still that $Q_{4N} \subset D$, let $w \in \partial D$
be fixed, condition $S$ to exit $D$ at $w$, and let $L' = \sL (\sE^F_{D^c} S)$ 
be the loop-erasure. Masson \cite{Mas} proves that the law of 
$\sE^F_{Q_N^c} L'$ is comparable, up to constants factors, to the 
law of $\sE^F_{Q_N^c} L$. Since the event $G(c_1,c_2,C_2)$ 
is measurable with respect to $\sE^F_{Q_N^c} L$, the statement
of the corollary follows also for $L'$.
\end{remark}

\subsection{Lower bound on $| Q_N \cap \sU_0 |$}
\label{ssec:cycle-pop}

We continue with the setup of the previous section.
Our argument will use the cycle popping idea of Wilson \cite{W};
see also \cite{LP:book}.

\begin{theorem}
\label{T:cyclepop}
Assume $N \ge 1$, $Q_{4N} \subset D \subset \bZ^d$, and 
let $\sU = \sU_D$. There exist constants $C, c$, such that 
\eqnst
{ \bP \big( |Q_N \cap \sU_0| \le \lambda N^4 \big)
  \le C \exp ( - c \lambda^{-1/3} ). }
\end{theorem}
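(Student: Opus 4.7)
The plan is to combine Corollary \ref{C:enough boxes} with Wilson's algorithm and per-shell Chernoff bounds that exploit independence across disjoint boxes. Given $\lambda$, I would set $k := \lceil c \lambda^{-1/3} \rceil$ and $m := \lfloor N/(4k) \rfloor$, tacitly assuming $m \ge 16$ (the very small $\lambda$ regime is handled by Proposition \ref{P:len-lb} applied to $L \subset \sU_0$, which alone yields $|L \cap Q_N| \ge c N^2$ with the required probability). Corollary \ref{C:enough boxes} yields, with probability $\ge 1 - C \exp(-ck)$, at least $c_2 k$ good shells, each supplying a target $T_j := A(Y_j) \cap \beta_j \subset L$ of capacity of order $m^2$. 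Thinning to every third good shell (losing only a constant factor), I may assume the boxes $B_j := Q(Y_j, m)$ of the retained good shells are pairwise disjoint.

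Next, Harnack applied to Proposition \ref{P:capacity} gives, for $z \in R_j := Q(Y_{j,2}, m/16)$,
\[ \bP^z\bigl( T_{T_j} < \tau_{B_j} \wedge \tau_D \bigm| L \bigr) \ge c m^{4-d}, \]
so that $\bE[ |R_j \cap \sU_0| \mid L ] \ge c m^d \cdot m^{4-d} = c m^4$ on the good event. To obtain a high-probability lower bound I run Wilson's algorithm using fresh independent SRWs $\tilde S^z$ (one per vertex), processing the retained $R_j$'s one shell at a time after $L$ has been built. Setting
\[ W_z := \bigl\{ T_{T_j}[\tilde S^z] < \tau_{B_j}[\tilde S^z] \wedge \tau_D[\tilde S^z] \bigr\}, \qquad z \in R_j, \]
the events $\{W_z\}$ are mutually independent given $L$ (since each $W_z$ is a function of the independent walk $\tilde S^z$ alone), with conditional probability $\ge c m^{4-d}$. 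Per-shell Chernoff then gives $|\{z \in R_j : W_z \}| \ge (c/2) m^4$ with conditional probability $\ge 1 - \exp(-c m^4)$.

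The main obstacle, and where I expect the bulk of the technical work, is to pass from $W_z$ to $\{z \in \sU_0\}$. In Wilson's algorithm $\tilde S^z$ is stopped at the first hit on the current tree, which contains paths previously added from vertices processed earlier; if one of those paths has reached $\partial D$ rather than $L$, attaching to it would drag $z$ into the root component. To control this I would use the intersection bound for two independent SRWs in $\bZ^d$ with $d \ge 5$, namely $\bP(\tilde S^z \cap \tilde S^{z'} \neq \emptyset) \le C(1 + |z - z'|)^{4-d}$. Summing over $z' \in R_j$ processed earlier, and using disjointness of the $B_j$'s to eliminate cross-shell interference as long as walks are confined to their boxes before hitting $T_j$, shows that only a constant fraction of $\{z : W_z \}$ can be lost to interference, leaving $|R_j \cap \sU_0| \ge c m^4$ with conditional probability $\ge 1 - \exp(-c m^4)$.

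A union bound over the retained good shells gives total failure probability at most $C \exp(-ck) + k \exp(-c m^4) \le C \exp(-c \lambda^{-1/3})$; the second term is negligible when $m^4 \gg k$, i.e.~when $k \ll N^{4/5}$, and in the complementary regime $\lambda$ is so small that the desired bound already follows from Proposition \ref{P:len-lb} applied to $L$. On the success event,
\[ |Q_N \cap \sU_0| \ge \sum_{j} |R_j \cap \sU_0| \ge c k m^4 \asymp N^4/k^3 \ge c \lambda N^4, \]
completing the proof.
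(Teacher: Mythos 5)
Your overall architecture matches the paper's: condition on $L$, invoke Corollary \ref{C:enough boxes} to get $\asymp c_2 k$ good shells with $k \asymp \lambda^{-1/3}$ and $m \asymp N/k$, extract volume $\asymp m^4$ per good shell, and conclude since $k m^4 \asymp \lambda N^4$. The gap is in the per-shell step, and it is the heart of the matter.

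Your events $W_z$ are indeed independent given $L$, but they are defined through fresh walks $\tilde S^z$ that are \emph{not} the walks Wilson's algorithm actually uses: in the algorithm each walk is stopped at its first hit on the current tree, and the branches share randomness. You correctly flag the passage from $\{W_z\}$ to $\{z \in \sU_0\}$ as the main obstacle, but the proposed repair (pairwise intersection bounds $C(1+|z-z'|)^{4-d}$, ``only a constant fraction is lost'') does not work, for two reasons. First, interference is not a perturbation: once a few vertices of $R_j$ have been processed, their branches occupy the box, and essentially \emph{every} subsequent walk from $R_j$ terminates on an earlier branch after $O(1)$ steps (by the end of the sweep every vertex of $R_j$ is in the tree). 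Whether $z \in \sU_0$ is then decided by which component the branch it attaches to belongs to --- and since only $\asymp m^4$ of the $\asymp m^d$ vertices of $R_j$ end up in $\sU_0$, the ``bad'' branches are the overwhelming majority. A bound on the number of pairwise walk intersections (whose expectation is of order $m^{d+4} \gg m^4$ in any case) cannot control this. The paper's resolution is the two-stack cycle-popping coupling (Lemmas \ref{L:coupling} and \ref{L:VII-lb}): one defines an auxiliary set $V^{II}_j$ using fresh stacks in which every revealed path inside $A''(j)$ either exits $A''(j)$ or leads to $\beta_j$, so that hitting a previously revealed path is never harmful, and then couples monotonically to the true tree via $V^{II}_j \subset V^I_j \subset \sU_0$. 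This is the idea your proposal is missing.

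Second, even with a correct interference bound in expectation, the per-shell success probability one can extract (by Paley--Zygmund / one-sided Chebyshev, as in Lemma \ref{L:VII-lb}, whose second moment $Cm^8$ only matches the first moment squared up to constants) is a \emph{constant}, not $1 - \exp(-c m^4)$. Your union bound over shells, with error $k \exp(-c m^4)$, then collapses; and even taken at face value it requires $m^4 \gg k$, i.e.\ $\lambda \gtrsim N^{-12/5}$, while your Proposition \ref{P:len-lb} fallback only covers $\lambda \lesssim N^{-3}$, leaving the window $N^{-3} \lesssim \lambda \lesssim N^{-12/5}$ uncovered. The correct structure, as in the paper, is: constant success probability per shell, conditional independence of the shells given $L$, and binomial concentration over the $\asymp k$ good shells, which yields $\exp(-ck) = \exp(-c\lambda^{-1/3})$ with no constraint relating $m^4$ to $k$.
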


\begin{proof}
Condition on $L$, and assume that the event \eqref{e:G-event} occurs. 
Let $J$ be the set of indices $1 \le j \le k$ (a $\sigma(L)$-measurable 
random set) satisfying the requirements in this event. 
For each $j \in J$, let 
\eqnst
{ A'(j) 
  = A'(Y_j) \qquad
  A''(j)
  = A''(Y_j). }
The definitions of $A'$ and $A''$ made in \eqref{e:A'-A''-def}
ensure that $A''(j)$, $j \in J$ are disjoint. 

We will need two coupled collections of stacks.
Associate to each $z \in (\cup_{j \in J} A''(j)) \setminus L$ 
a stack of arrows, and let us call these $\mathsf{Stacks\ I}$.
For each $j \in J$ and each $z \in A''(j) \cap L \setminus \beta_j$,
pick a new independent stack leaving the rest of the stacks 
unchanged. Call this second collection of stacks 
$\mathsf{Stacks\ II}$. In both $\mathsf{Stacks\ I}$ and 
$\mathsf{Stacks\ II}$, and for every $j \in J$,
pop all cycles that are entirely contained in $A''(j)$.
That is, if a cycle starts in $A''(j)$, but part of it
lies outside $A''(j)$, we do not pop it.
It is important to note that the order of popping cycles
is irrelevant for determining the final configuration 
on the top of the stacks.

For each $j \in J$, let 
\eqnsplst
{ V^I_j 
  &= \left\{ y \in A'(j) : 
    \parbox{5.5cm}{cycle popping using $\mathsf{Stacks\ I}$
    reveals a path from $y$ to $L$} \right\} \\
  V^{II}_j
  &= \left\{ y \in A'(j) : 
    \parbox{6.3cm}{cycle popping using $\mathsf{Stacks\ II}$ \\
    reveals a path from $y$ to $A''(j) \cap \beta_j$} \right\} }
Note that $(V^I_j,V^{II}_j)_{j \in J}$ are conditionally 
independent, given $L$, $J$.

\begin{lemma}
\label{L:coupling}
We have $V^I_j \supset V^{II}_j$ for all $j \in J$. 
\end{lemma}

\begin{proof}
Let $y \in V^{II}_j$, and consider $\mathsf{Stacks\ II}$. Starting from $y$, 
follow the arrows in $\mathsf{Stacks\ II}$, until $A''(j) \cap \beta_j$ 
is hit. Removing cycles chronologically from this path 
pops some cycles entirely contained in $A''(j)$, and reveals a
path from $y$ to $A''(j) \cap \beta_j$. Now if we follow the
arrows in $\mathsf{Stacks\ I}$ instead, then the same arrows 
are used until the first time $L$ is hit. This guarantees
that a path from $y$ to $L$ is revealed, that does not 
leave $A''(j)$, and hence $y \in V^I_j$. 
\end{proof}

\begin{lemma}
\label{L:VII-lb}
Assume $d \ge 5$. For some $c_3 > 0$ we have
\eqnst
{ \bP \big( |V^{II}_j| \ge c_3 m^4 \,\big|\, L,\, j \in J \big)
  \ge c
  > 0. }
\end{lemma}

\begin{proof}
We estimate the first and second moments of $|V^{II}_j|$.

Fix $y \in A'(j)$. Following the arrows from $y$ in 
$\mathsf{Stacks\ II}$ we perform a random walk until 
either we exit $A''(j)$, or we hit $A''(j) \cap \beta_j$.
Therefore, 
\eqnspl{e:hit-from-y}
{ \bP \big( y \in V^{II}_j \,\big|\, L,\, j \in J \big)
  &= \bP \big( T_{A''(j) \cap \beta_j}[S^y] < \tau_{A''(j)}[S^y] 
    \,\big|\, L,\, j \in J \big) \\
  &\ge \bP \big( T_{A(j) \cap \beta_j}[S^y] < \tau_{A''(j)}[S^y] 
    \,\big|\, L,\, j \in J \big). }
The last expression is 
\eqn{e:hit-first}
{ \ge c \bP \big( T_{A(j) \cap \beta_j}[S^y] < \infty 
    \,\big|\, L,\, j \in J \big). }
(One way to see this is by an argument similar to 
that of Lemma \ref{L:ubh-ext}, where we let $R$ count
the number of crossings by the walk from a box 
$A^{**} \subset A''(j)$ to $\pd A''(j)$ before hitting $\beta_j \cap A(j)$,
where each face of $\pd A^{**}$ is at distance $m/16$ away 
from the corresponding face of $\pd A''(j)$.) 

The Harnack inequality and Proposition \ref{P:capacity} now implies,
after summing over $y$ in \eqref{e:hit-from-y}--\eqref{e:hit-first}, that 
\eqnst
{ \bE \big( |V^{II}_j| \,\big|\, L,\, j \in J \big)
  \ge c c_1 m^d m^{4-d}
  = c m^4. }  

We now bound the second moment of $|V^{II}_j|$.
If $x, y \in V^{II}_j$ occurs, then there exists 
a unique $w \in A''(j)$ with the property that cycle popping 
reveals three edge-disjoint paths:
one from $w$ to $A''(j) \cap \beta_j$, a second from 
$x$ to $w$ and a third from $y$ to $w$. (We allow to have 
$x = w$ or $y = w$ or both.) When this event 
happens with a fixed $w$, we can reveal the paths
by first following the arrows starting from $w$
until $A''(j) \cap \beta_j$ is hit, then following
the arrows starting from $x$ until $w$ is hit, then 
following the arrows starting from $y$ until $w$ 
is hit. This shows that 
\eqnspl{e:VII2ndm-ub}
{ &\bP \big( x, y \in V^{II}_j \,\big|\, L,\, j \in J \big) \\
  &\qquad \le \sum_{w \in A''(j)}
    \bP \big( T_{A''(j) \cap \beta_j}[S^w] < \infty 
    \,\big|\, L,\, j \in J \big) \,  
    \bP \big( T_w[S^x] < \infty \big) \, \bP \big( T_w[S^y] < \infty \big). }
Let 
$\tilde{A}(j) =  Q(Y_{j,1}, (3m/2))$, and note that 
$\partial \tilde{A}(j)$ has distance at least $c m$ from 
$A''(j) \cap \beta_j$, and also distance at least 
$c m$ from $A'(j)$. 
We estimate separately the cases:\\
(a) $w \in A''(j) \setminus \tilde{A}(j)$; and \\
(b) $w \in A''(j) \cap \tilde{A}(j)$.\\
The sum of the terms in the right hand side of \eqref{e:VII2ndm-ub}
corresponding to case (a) is at most:
\eqnsplst
{ &C m^{2-d} \Capac(A''(j) \cap \beta_j) 
    \sum_{w \in A''(j) \setminus \tilde{A}(j)}
    \sum_{x, y \in A'(j)} G(x,w) G(y,w) \\
  &\qquad \le C m^{2-d} m^2 m^2 m^2 m^d 
  = C m^8. }
The sum for case (b) is at most:
\eqnsplst
{ &C m^{2-d} m^{2-d} m^d m^d 
    \sum_{w \in A''(j) \cap \tilde{A}(j)} 
    \bP \big( T_{A''(j) \cap \beta_j}[S^w] < \infty \big) \\
  &\qquad \le C m^4 \sum_{w \in \tilde{A}(j)} 
    \bP \big( T_{A''(j) \cap \beta_j}[S^w] < \tau_{\tilde{A}(j)} \big) \\
  &\qquad \le C m^4 m^2 \Capac(A''(j) \cap \beta_j) \\
  &\qquad \le C m^8. }
Here the last line follows from $j \in J$ and Proposition \ref{P:capacity}.

The moment estimates for $|V^{II}_j|$ and the one-sided Chebyshev
inequality yield:
\eqnst
{ \bP \big( V^{II}_j \ge c m^4 \,\big|\, L,\, j \in J \big) 
  \ge c 
  > 0. }
This completes the proof of the Lemma.
\end{proof}

We can now complete the proof of Theorem \ref{T:cyclepop}.
Choose $k \asymp \lambda^{-1/3}$ so that $\lambda N^4 \asymp k m^4$. 
Then using Corollary \ref{C:enough boxes}, the conditional 
independence of $(V^{II}_j)_{j \in J}$, and Lemma \ref{L:coupling},
for a suitably small $c_4 > 0$ we have
\eqnsplst
{ \bP \big( |Q_N \cap \sU_0| \le \lambda N^4 \big) 
  &\le C \exp( - c k )
      + \bE \bigg( \bP \bigg( \parbox{4cm}{$V^{II}_j \ge c_3 m^4$ for less than $c_4 k$ indices $j \in J$} 
          \,\bigg|\, L \bigg) I[G(c_1,c_2,C_2)] \bigg) \\
  &\le C \exp ( -c \lambda^{-1/3} ). }
This completes the proof of the Theorem.
\end{proof}

\begin{theorem}
\label{T:UST-lb}
Assume $d \ge 5$ and let $\sU = \sU_{\bZ^d}$. There exist 
$c > 0$ and $C$ such that for all $\lambda > 0$ we have
\eqnst
{ \bP \big( |B_\sU(0,n)| \le \lambda n^2 \big)
  \le C \exp ( - c \lambda^{-1/5} ). }
\end{theorem}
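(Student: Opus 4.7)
My plan is to adapt the cycle-popping proof of Theorem \ref{T:cyclepop} so that in addition to producing many vertices in $Q_N\cap\sU_0$, it also certifies that all of them lie in $B_\sU(0,n)$. I keep every piece of notation from that proof (the LERW $L$ from $0$, the pieces $\alpha_j,\beta_j$, the boxes $A''(j)$, and the sets $V^{II}_j\subset V^{I}_j$) and work with parameters $N=km$ to be chosen at the end. The structural input is Lemma \ref{L:coupling}: for $y\in V^{II}_j$ the $\mathsf{Stacks\ I}$-path from $y$ to $L$, which is the actual path in $\sU$ from $y$ to $L$, uses the same arrows as the $\mathsf{Stacks\ II}$-path from $y$ to $\beta_j\cap A''(j)$, and hence has the same length $\ell_j(y)$. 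Therefore
\[
 d_\sU(0,y)\;\le\;|\alpha_j|+|\beta_j|+\ell_j(y).
\]

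Each summand has a Gaussian-type upper tail on the natural scale. Since any loop erasure is a subsequence of the underlying random walk, $|\alpha_j|\le\tau_{Q_N}[S]$ for every $j\le k$ simultaneously, so the single bound $\bP(\tau_{Q_N}>C_0 N^2)\le C\exp(-cC_0)$ controls all of the $|\alpha_j|$'s at once. On the good event $G(c_1,c_2,C_2)$ of Corollary \ref{C:enough boxes}, $|\beta_j|\le C_2 m^2$ for the good indices; and similarly $\ell_j(y)\le\tau_{A''(j)}[S^y]$ has Gaussian tail on scale $m^2$. I would rerun the Paley--Zygmund computation of Lemma \ref{L:VII-lb} for the truncated set $\widetilde V_j=\{y\in V^{II}_j:\ell_j(y)\le C_0 m^2\}$: since the first- and second-moment bounds in Lemma \ref{L:VII-lb} already only involve random walks restricted to $A''(j)$ at scale $m^2$, the truncation costs only a constant factor, so $|\widetilde V_j|\ge cm^4$ holds with positive conditional probability. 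On the combined good event---$G$, the existence of at least $c_2 k$ good indices $j$ with $|\widetilde V_j|\ge cm^4$, and $\tau_{Q_N}\le C_0 N^2$---every such $y$ satisfies $d_\sU(0,y)\le C_0 N^2+(C_2+C_0)m^2$, which is $\le n$ provided I choose $N^2\asymp n/C_0$ and $(C_2+C_0)m^2\le n/2$.

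For the final balance I set $N^2=n/(2C_0)$ and pick $k$ so that $km^4=N^4/k^3\asymp\lambda n^2$; this gives $k\asymp(C_0^2\lambda)^{-1/3}$. The total failure probability is at most $C\exp(-ck)+C\exp(-cC_0)$. Equating the two exponents forces $C_0\asymp k\asymp(C_0^2\lambda)^{-1/3}$, i.e.\ $C_0\asymp\lambda^{-1/5}$, and then both tails collapse to $\exp(-c\lambda^{-1/5})$, as required.

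The main technical obstacle will be rerunning the second-moment estimate of Lemma \ref{L:VII-lb} under the additional constraint $\ell_j(y)\le C_0 m^2$, with a uniform lower bound on the conditional probability that $|\widetilde V_j|\ge cm^4$. A secondary point is that one must handle all the $|\alpha_j|$'s by the single event $\{\tau_{Q_N}\le C_0 N^2\}$ rather than by separate estimates for each $Q_{jm}$, so as to avoid a $\log k$ factor that would otherwise spoil the clean exponent $1/5$ in the final balance.
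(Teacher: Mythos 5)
Your overall strategy is the paper's own: truncate the cycle-popping sets $V^{II}_j$ by path length, control the length of the LERW backbone inside $Q_N$ and of the attached branches separately, and balance the two tails $\exp(-ck)$ and $\exp(-c\,n/N^2)$; your parameters ($n/N^2\asymp k\asymp\lambda^{-1/5}$, $m\asymp\lambda^{3/10}\sqrt n$) coincide exactly with the paper's choice $N=\lambda^{1/10}\sqrt n$. However, one step is genuinely false: the claim that $|\alpha_j|\le\tau_{Q_N}[S]$ ``since any loop erasure is a subsequence of the underlying random walk.'' The subsequence property compares the whole loop erasure with the whole path; here $\alpha_j$ is the loop erasure $L$ of the \emph{infinite} walk, stopped at the first hit of $L$ on $\pd_i Q_{jm}$, and the walk times contributing to this initial segment of $L$ need not precede $\tau_{Q_N}[S]$. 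For instance, if $S$ exits $Q_N$ quickly, returns to $0$ (so that the entire first excursion is erased), and then wanders inside $Q_N$ for a long time before leaving for good, then $|\alpha_j|$ can be far larger than $\tau_{Q_N}[S]$. The paper flags exactly this trap in the Remark following Lemma \ref{L:len-box-ub}: one does not have $M_N\le M^S_N$.

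The gap is repairable without changing your exponent. Since $\alpha_j$ is self-avoiding and its interior lies in $Q_{jm}\subset Q_N$, one has $|\alpha_j|\le M_N\le|S[0,\infty)\cap Q_N|$ for all $j\le k$ simultaneously, and the Green-function moment computation of Lemma \ref{L:len-box-ub} gives $\bP(M_N\ge C_0N^2)\le C\exp(-cC_0)$ --- precisely the single event you wanted, obtained by a different route. With that substitution your balance goes through and reproduces the paper's argument. Two smaller points: (i) the $\mathsf{Stacks\ I}$ path from $y$ uses the same arrows only until the \emph{first} hit on $L$, so its length is at most, not equal to, $\ell_j(y)$ --- this only helps you; (ii) your truncation at $\ell_j(y)\le C_0m^2$ requires showing that the event that $S^y$ hits $A(j)\cap\beta_j$ before $\tau_{A''(j)}$ but only after time $C_0m^2$ has probability $o(m^{4-d})$, which needs the Markov property together with a capacity and local-CLT bound as in the paper's treatment of the subtracted term in \eqref{e:first-moment-lb}, not merely a tail bound on $\tau_{A''(j)}$; you correctly identify this as the main remaining work.
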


For the proof of this theorem, we assume the setting of 
Proposition \ref{P:len-lb}, with $D = \Z^d$. 
Recall that $M_N = |\sE^F_{\partial_i Q_N} L|$. 

\begin{lemma}
\label{L:len-box-ub}
We have
\eqnst
{ \bE \big( M_N^k \big)
  \le C_2^k k! N^{2k}. }
Consequently, there exist $c > 0$ and $C$ such that
for all $\lambda > 0$ we have
\be \label{e:MNexp}
{ \bP \big( M_N \ge \lambda N^2 \big)
  \le C \exp ( - c \lambda ). }
\ee
\end{lemma}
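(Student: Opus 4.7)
The plan is to dominate $M_N$ pointwise by the total occupation time that $S$ spends in $Q_N$, and then estimate moments of that occupation time by an iterated Green-function calculation. Observe that every vertex of $\sE^F_{\pd_i Q_N} L$ lies in $Q_N$ (since $S$ must cross $\pd_i Q_N$ before it can leave $Q_N$), and each such vertex is visited at least once by the original walk $S$; this gives the pointwise chain of inequalities
\bes
M_N \;\le\; |L \cap Q_N| \;\le\; H_{Q_N}(S) \;=\; \sum_{j \ge 0} I[S_j \in Q_N].
\ees
Hence it suffices to prove the moment bound $\bE(H_{Q_N}(S)^k) \le C_2^k\, k!\, N^{2k}$.

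To compute this, I expand the $k$-th power as $H_{Q_N}(S)^k = \sum_{j_1, \dots, j_k} \prod_\ell I[S_{j_\ell} \in Q_N]$, split the summation according to the relative order of the $j_\ell$'s (which yields a factor $k!$), and apply the strong Markov property successively. This produces
\bes
\bE(H_{Q_N}(S)^k) \;\le\; k! \sum_{y_1, \dots, y_k \in Q_N} G(0,y_1)\, G(y_1, y_2) \cdots G(y_{k-1}, y_k) \;+\; (\text{diagonal corrections}),
\ees
where the diagonal corrections, coming from tuples with coincident indices, are easily checked (by induction on $k$) to be of strictly lower order. Using the uniform bound $\sup_{z \in Q_N} \sum_{y \in Q_N} G(z,y) \le C N^2$ --- valid for $d \ge 3$ from $G(z,y) \asymp |z-y|^{2-d}$ and summation over a ball of radius $O(N)$ --- and iterating the sums over $y_k, y_{k-1}, \dots, y_1$, one obtains $\bE(H_{Q_N}(S)^k) \le k!\,(C N^2)^k$, and hence the same bound on $\bE(M_N^k)$.

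The exponential tail \eqref{e:MNexp} is then a standard Chernoff consequence of the moment bound: the power series
\bes
\bE(e^{t M_N}) \;=\; \sum_{k \ge 0} \frac{t^k}{k!} \bE(M_N^k) \;\le\; \sum_{k \ge 0} (t C_2 N^2)^k \;=\; \frac{1}{1 - t C_2 N^2}
\ees
is bounded by a universal constant for $t = 1/(2 C_2 N^2)$, and Markov's inequality applied to $e^{t M_N}$ then yields $\bP(M_N \ge \lambda N^2) \le C e^{-c \lambda}$ for universal $C, c > 0$. There is no serious technical obstacle here: the crude pointwise domination $M_N \le H_{Q_N}(S)$ bypasses any need for a multi-point LERW estimate and reduces the entire argument to an elementary Green-function sum.
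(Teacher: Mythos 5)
Your proposal is correct and follows essentially the same route as the paper: dominate $M_N$ by the walk's presence in $Q_N$ (the paper uses the range $|S[0,\infty)\cap Q_N|$, you use the occupation time $H_{Q_N}(S)$ — an immaterial difference), bound the $k$-th moment by $k!\sum G(0,y_1)G(y_1,y_2)\cdots G(y_{k-1},y_k)\le k!(CN^2)^k$, and conclude via the exponential moment and Markov's inequality with $t=1/(2C_2N^2)$. (Your "diagonal corrections" are in fact unnecessary: the full sum over unordered index tuples is already bounded by $k!$ times the ordered sum, which equals the Green-function chain exactly.)
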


\begin{remark}
If $M^S_N$ is the length of a simple random walk path
run until its first exit from $Q_N$ then it is well known that 
$M^S_N/N^2$ has an exponential tail. However 
we do not have $M_N \le M^S_N$, so need an alternative argument
to obtain the bound \eqref{e:MNexp}.
\end{remark}

\begin{proof}[Proof of Lemma \ref{L:len-box-ub}.]
We have
\eqnsplst
{ \bE \big( M_N^k \big)
  &\le \bE \big( |S[0,\infty) \cap Q_N|^k \big) \\
  &= k! \sum_{x_1, \dots, x_k \in Q_N} 
    G(0,x_1) G(x_1,x_2) \dots G(x_{k-1},x_k) \\
  &\le k! \Big( \sum_{z \in Q_{2N}} G(0,z) \Big)^k \\
  &= C_2^k k! N^{2k}. }
To see the second statement:
\eqnst
{ \bP \big( M_N \ge \lambda N^2 \big)
  \le \exp (- \lambda t N^2) \bE \big( e^{t M_N} \big)
  \le \exp ( - \lambda t N^2 ) \frac{1}{1 - C_2 t N^2}. }
Choosing $t = 1/(2 C_2 N^2)$ completes the proof of the Lemma.
\end{proof}

\begin{proof}[Proof of Theorem \ref{T:UST-lb}]
It is sufficient to prove the statement for 
$0 < \lambda < \lambda_0$ for some fixed $\lambda_0$.
Let us choose $N = \lambda^{\alpha} \sqrt{n}$ with some
exponent $\alpha > 0$, that we will optimize over at the
end of the proof. We have
\eqnst
{ \bP ( M_N \ge n/2 )
  \le C \exp \Big( - c \frac{n}{2 N^2} \Big)
  = C \exp ( - c \lambda^{-2 \alpha} ). }
Condition on $L$, as in the proof of Theorem \ref{T:cyclepop}, 
and assume the event
\eqnst
{ \widetilde{G}
  = G(c_1,c_2,C_1) \cap \{ M_N < n/2 \}. } 
We set
\eqnst
{ \lambda n^2 
  = c_3 k m^4
  \asymp N m^3, }
which means we pick $m$ to be
\eqnst
{ m
  \asymp \sqrt{n} \lambda^{(1-\alpha)/3}. }
Hence $N / m \asymp k \asymp \lambda^{(4 \alpha - 1)/3}$.
Note that this implies that 
\eqnst
{ \bP \big( G(c_1,c_2,C_1)^c \big)
  \le C \exp ( -c (N/m) )
  = C \exp ( -c \lambda^{(4 \alpha - 1)/3} ). }
Since we want $N / m \gg 1$, we impose the
condition $0 < \alpha < 1/4$ on $\alpha$.

For each $j \in J$, let 
\eqnsplst
{ \widetilde{V}^I_j 
  &= \left\{ y \in A'(j) : 
    \parbox{8cm}{cycle popping using $\mathsf{Stacks\ I}$
    reveals a path from $y$ to $L$ of lenght $\le n/2$} \right\} \\
  \widetilde{V}^{II}_j
  &= \left\{ y \in A'(j) : 
    \parbox{8cm}{cycle popping using $\mathsf{Stacks\ II}$
    reveals a path from $y$ to $A''(j) \cap \beta_j$
    of length $\le n/2$} \right\} }
Notice that $(\widetilde{V}^I_j,\widetilde{V}^{II}_j)_{j \in J}$
are again conditionally independent, given $L$.
The same proof as in Lemma \ref{L:coupling} shows that 
we have $\widetilde{V}^I_j \supset \widetilde{V}^{II}_j$ 
for all $j \in J$. 

In estimating $\bE \big( \widetilde{V}^{II} \big)$ from below, 
we write
\eqnspl{e:first-moment-lb}
{ \bP \big( y \in \widetilde{V}^{II}_j \,\big|\, L,\, j \in J \big) 
  &\ge \bP \big( T_{A''(j) \cap \beta_j}[S^y] < \tau_{A''(j)}[S^y] 
     \,\big|\, L,\, j \in J \big) \\
  &\qquad - \bP \big( | \sE^F_{\pd A''(j)}(S^y) | > n/2,\, T_{A''(j) \cap \beta_j}[S^y] \circ \Theta_{n/2} < \infty \big). }
The first term on the right hand side is $\ge c m^{4-d}$ due to \eqref{e:hit-first} and $j \in J$.
We now show that the subtracted term is $\le C \exp ( - c n/m^2 ) m^{4-d}$.

Note that we may restrict to $n/2 > 2 m^2$ for convenience (although not needed for the claim), 
since our choice of $m$ implies that $n \asymp m^2 \lambda^{-2(1 - \alpha)/3}$, and we 
are considering small $\lambda$.
Using the Markov property at time $n/2 - m^2$, the second term 
in the right hand side of \eqref{e:first-moment-lb} is at most
\eqnst
{ \bP^y \big( \tau_{A''(j)} > n/2 - m^2 \big) \,
     \sum_{z \in A''(j)} \bP^z \big( T_{A''(j) \cap \beta_j} < \infty \big) \,
     \bP^y \big( S(n/2) = z \,\big|\, \tau_{A''(j)} > n/2 - m^2 ). }
The first probability can be bounded by $C \exp ( - c n/m^2 )$, by considering 
stretches of the walk of length $m^2$, in each of which there is probability 
$\ge c > 0$ of exit from $A''(j)$. The conditional distribution of $z$ is 
bounded above by $c m^{-d}$, due to the local CLT applied to 
$S(n/2-m^2), \dots, S(n/2)$. Hence we are left to show that 
\eqnst
{ \sum_{z \in A''(j)} \bP^z \big( T_{A''(j) \cap \beta_j} < \infty \big)
  \le m^4. }

Let us write $\wt \beta_j = A''(j) \cap \beta_j$, and 
$h(z) = \bP^z ( T_{\wt \beta_j} < \infty )$. By a last exit decomposition
$h(z) = \sum_{u \in \wt \beta_j} G(z,u) e_{\wt \beta_j}(u)$, where
$e_{\wt \beta_j}(u) = \bP^u ( T^+_{\wt \beta_j} = \infty )$. Therefore,
we have
\eqnsplst
{ \sum_{z \in A''(j)} h(z)
  &= |\wt \beta_j| + \sum_{z \in A''(j) \setminus \wt \beta_j} h(z)
  \le C m^2 + \sum_{u \in \wt \beta_j} \sum_{z \in A''(j)} G(z,u) e_{\wt \beta_j}(u) \\
  &\le C m^2 + C m^2 \sum_{u \in \wt \beta_j} e_{\wt \beta_j}(u)
  = C m^2 + C m^2 \Cap(\wt \beta_j)
  \le C m^4, }
using that $|\wt \beta_j|, \Cap(\wt \beta_j) \le C m^2$ when $j \in J$.

Hence we obtain that there exists $\lambda_0 = \lambda_0(d) > 0$, such that 
when $0 < \lambda \le \lambda_0$, the right hand side of \eqref{e:first-moment-lb} 
is at least
\eqnsplst
{ c m^{4-d} - C \, \exp ( -c n/m^2 ) \, m^{4-d}
  \ge c m^{4-d} - C \, \exp ( -c \lambda^{- 2 (1 - \alpha)/3} ) \, m^{4-d} 
  \ge c m^{4-d}. }
It follows that $\bE \big( |\wt V^{II}_j| \,\big|\, L,\, j \in J \big) \ge c m^4$.

For the second moment, we simply estimate
\eqnst
{ \bE \big( (\widetilde{V}^{II}_j)^2 \,\big|\, L,\, j \in J \big)
  \le \bE \big( (V^{II}_j)^2 \,\big|\, L,\, j \in J \big)
  \le C m^8. }
The one-sided Chebyshev inequality yields that for
some $c_4 = c_4 (d) > 0$ we have
\eqnst
{ \bP \big( \widetilde{V}^{II}_j \ge c_4 m^4 \,\big|\, L,\, j \in J \big)
  \ge c 
  > 0. }
This allows us to complete the proof as follows.
\eqnsplst
{ &\bP \big( |B_\sU(0,n)| \le \lambda n^2 \big) \\
  &\qquad \le \bP \big( \widetilde{G}^c \big) 
      + \bP \Big( \widetilde{G},\, 
      \sum_{j \in J} \widetilde{V}^{I}_j \le \lambda n^2 \Big) \\
  &\qquad \le \bP ( M_N > n/2 ) + \bP \big( G(c_1,c_2,C_1)^c \big) 
      + \bE \Big( \bP \Big( \sum_{j \in J} \widetilde{V}^{II}_j < c_3 k m^4 
          \,\Big|\, L \Big) ; \widetilde{G} \Big) \\
  &\qquad \le C \exp ( -c \lambda^{-2 \alpha} )
      + C \exp ( -c \lambda^{(4 \alpha - 1)/3} )
      + \exp ( -c \lambda^{(4 \alpha - 1)/3} ). }
We choose $\alpha$, so that $- 2 \alpha = (4 \alpha - 1)/3$,
so $\alpha = 1/10$. This completes the proof of the Theorem.
\end{proof}

\begin{remark}
We note the following minor extension of Theorem \ref{T:cyclepop}, that is 
needed in \cite{BHJ15}. Similarly to Remark \ref{R:cond-exit}, since the 
arguments of Theorem \ref{T:cyclepop} only rely on properties of 
$\sE^F_{Q_N^c} L$, the result extends to the case when the 
component of the origin is connected to a fixed vertex $w \in \partial D$.
\end{remark}

\noindent MB: Department of Mathematics,
University of British Columbia,
Vancouver, BC V6T 1Z2, Canada. \\
{\tt barlow@math.ubc.ca}

\ \\

\noindent AAJ: Department of Mathematical Sciences,
University of Bath,
Claverton Down, Bath BA2 7AY,
United Kingdom. \\
{\tt A.Jarai@bath.ac.uk}


\begin{thebibliography}{}

\bibitem[AN]{AN84} M. Aizenman and C.M. Newman.
Tree graph inequalities and critical behavior in percolation models.
{\em J. Stat. Phys.} {\bf 36} Nos. 1/2, (1984), 107--143.

\bibitem[BK]{BK}
M.T. Barlow and D. Karl\i.
Some remarks on uniform boundary Harnack Principles.
{\em Preprint.} (2015), 
{\tt arxiv.org/abs/1507.04115}

\bibitem[BM1]{BM1}
M.T. Barlow and R. Masson. Exponential tail bounds for loop-erased random walk
in two dimensions.  
{\em Ann. Probab.} {\bf 38} No. 6, (2010), 2379--2417.  

\bibitem[BM2]{BM2}
M.T. Barlow and R. Masson. 
Spectral dimension and random walks on the two dimensional uniform spanning tree. 
{\sl Comm. Math. Phys.} {\bf 305} (2011), 23--57. 

\bibitem[BLPS]{BLPS}
I. Benjamini, R. Lyons, Y. Peres and O. Schramm.
Uniform spanning forests.
{\sl Ann. Probab.} {\bf 29} (2001), 1--65. 

\bibitem[BHJ]{BHJ15}
S. Bhupatiraju, J. Hanson and A.A. J\'arai.
Inequalities for critical exponents in $d$-dimensional sandpiles.
{\sl Preprint.} (2016)

\bibitem[La1]{La1}
G.F. Lawler.
\newblock A self-avoiding random walk.
\newblock {\em Duke Math. J.}, {\bf 47}(3):655--693, 1980.

\bibitem[La2]{La2}
G.F. Lawler.
\newblock {\em Intersections of random walks}.
\newblock Probability and its Applications. Birkh\"auser Boston Inc., Boston,
  MA, 1991.

\bibitem[La3]{La3}
G.F. Lawler.
\newblock The logarithmic correction for loop-erased walk in four dimensions.
\newblock In {\em Proceedings of the Conference in Honor of Jean-Pierre Kahane
  (Orsay, 1993)}, number Special Issue, pages 347--361, 1995.

\bibitem[Law99]{Law99}
G.F. Lawler.
Loop-erased random walk.
In: {\em Perplexing problems in probability}, ed. M. Bramson, R. Durrett.
({\em Progress in probability, vol. 44}), 
Birkh\"auser, 1999.

\bibitem[LMS]{LMS}
R. Lyons, B.J. Morris and O. Schramm.
Ends in uniform spanning forests.
{\sl Electron. J. Probab.} {\bf 13} (2008), no. 58, 1702--1725. 

\bibitem[LP]{LP:book}
R. Lyons {\rm with} Y. Peres.
\newblock {\em Probability on Trees and Networks}.
\newblock Cambridge University Press.
\newblock In preparation. Current
  version available at \hfil\break
  {\tt http://pages.iu.edu/\string~rdlyons/}.

\bibitem[LL]{LL08}
G.F. Lawler and V. Limic.
\newblock \textit{Random walk: a modern introduction}, 2009.
\newblock Cambridge University Press.

\bibitem[Mas]{Mas}
R. Masson.
\newblock The growth exponent for planar loop-erased random walk.
\newblock {\em Electron. J. Probab.}, {\bf 14} paper no. 36, 1012--1073, 2009.

\bibitem[Pem91]{Pem91} R. Pemantle:
Choosing a spanning tree for the integer lattice uniformly.
\emph{Ann.~Probab.} \textbf{19} (1991), no. 4, 1559--1574. 

\bibitem[W]{W} D.B. Wilson. 
\newblock Generating spanning trees more quickly than the cover time.
{\em Proceedings of the Twenty-eighth Annual ACM Symposium on the 
Theory of Computing (Philadelphia, PA, 1996),}  296--303, ACM, New York, 1996. 

\end{thebibliography}
\end{document}